\def\e{\epsilon}
\def\II{{\rm I\kern-0.5exI}}
\def\III{{\rm I\kern-0.5exI\kern-0.5exI}}
\newcommand{\norm}[1]{\lVert #1 \rVert}
\newcommand{\RR}{\mathbb{R}}
\newcommand{\ZZ}{\mathbb{Z}}
\DeclareMathOperator{\loc}{\textup{loc}}
\DeclareSymbolFont{bbold}{U}{bbold}{m}{n}
\DeclareSymbolFontAlphabet{\mathbbold}{bbold}
\newcommand{\cX}{\mathcal{X}}
\newcommand{\cY}{\mathcal{Y}}
\newcommand{\tilq}{\tilde{q}}
\newcommand{\bq}{\bar{q}}
\newcommand{\brho}{\bar{\rho}}
\newcommand{\bmu}{\bar{\mu}}
\newcommand{\vp}{\varphi}
\numberwithin{equation}{section}
\newtheorem{theorem}{Theorem}[section]
\newtheorem{lemma}[theorem]{Lemma}
\newtheorem{prop}[theorem]{Proposition}
\newtheorem{cor}[theorem]{Corollary}
\theoremstyle{remark}
\theoremstyle{definition}
\newtheorem{definition}[theorem]{Definition}
\author{Matt Jacobs}
\title{Existence of solutions to reaction cross diffusion systems }
\begin{document}
\maketitle

\begin{abstract}
Reaction cross diffusion systems are a two species generalization of the porous media equation.  These systems play an important role in the mechanical modelling of living tissues and tumor growth. Due to their mixed parabolic-hyperbolic structure, even proving the existence of solutions to these equations is challenging.  In this paper, we exploit the parabolic structure of the system to prove the strong compactness of the pressure gradient in $L^2$. The key ingredient is the energy dissipation relation, which along with some compensated compactness arguments, allows us to upgrade weak convergence to strong convergence.    As a consequence of the pressure compactness, we are able to prove the existence of solutions in a very general setting and pass to the Hele-Shaw/incompressible limit in any dimension. 
\end{abstract}

\section{Introduction}
In this paper, we consider the following two species reaction cross diffusion system
\begin{equation}\label{eq:system}
\begin{cases}
\partial_t \rho_1-\nabla \cdot (\rho_1(\nabla p-V))=\rho_1 F_{1,1}(p,n)+\rho_2 F_{1,2}(p,n),\\
\partial_t \rho_2-\nabla \cdot (\rho_2(\nabla p-V))=\rho_1 F_{2,1}(p,n)+\rho_2 F_{2,2}(p,n),\\
\rho p=z(\rho)+z^*(p),\\
\partial_t n-\alpha \Delta n=-n(c_1\rho_1+c_2\rho_2),
\end{cases}
\end{equation}
on the spacetime domain $Q_{\infty}:=[0,\infty)\times \RR^d$.  The study of these systems has become extremely important in the  modelling of tissue growth and cancer \cite{BYRNE2003567, Preziosi2008, Ranft20863}  and has drawn substantial interest from the mathematical community \cite{pqv2014, perthame_2015, gwiazda_perthame, kim_tong, price_xu, bubba_perthame, Jacobs2021, alexander_kim_yao, santambrogio_dendritic}. The equations models the growth and death of two populations of cells whose densities are given by $\rho_1, \rho_2$.  The densities are linked through a convex energy $z$ (and its convex dual $z^*$), which opposes the concentration of the total density $\rho=\rho_1+\rho_2$.  The energy induces a pressure function $p$, which dissipates energy by pushing the densities down $\nabla p$. In addition, the densities flow along an external vector field $V$. The source terms that control the growth/death of the two populations depend on both the pressure and a nutrient variable $n$.  The nutrient evolves through a coupled equation that accounts  for both diffusion and consumption.  

Throughout the paper, we assume that $V\in L^2_{\loc}([0,\infty);L^2(\RR^d))$ and $\nabla \cdot V\in L^{\infty}(Q_{\infty})$.  We will also have the following assumptions on the energy $z$: 
\begin{enumerate}[(z1)]
    \item $z:\RR\to \RR\cup\{+\infty\}$ is proper, lower semicontinuous, and convex,
    \item $z(a)=+\infty$ if $a<0$ and $z(0)=0$,
    \item   there exists $a>0$ such that $z$ is differentiable at $a$ and $\sup\partial z(0)<z'(a)$,
\end{enumerate}
as well as the following assumptions on the source terms:
\begin{enumerate}[(F1)]
    \item the $F_{i,j}$ are continuous on $\RR\times [0,\infty)$ and uniformly bounded,
    \item the cross terms $F_{1,2}, F_{2,1}$ are nonnegative.
\end{enumerate}
In certain cases, we will need the additional assumption:
\begin{enumerate}[(F3)]
    \item for $n$ fixed,  $p\mapsto (F_{1,1}(p,n)+F_{2,1}(p,n))$ and $p\mapsto  (F_{1,2}(p,n)+F_{2,2}(p,n))$ are decreasing.
\end{enumerate}

Constructing weak solutions to the system (\ref{eq:system}) is challenging due to the highest order nonlinear terms $\rho_1\nabla p, \rho_2\nabla p$.  Given a sequence of approximate solutions,  one needs either strong convergence of the densities or of the pressure gradient to pass to the limit.  Due to the hyperbolic character of the first two equations, the regularity of the individual densities need not improve over time. Furthermore, it is not clear if densities with BV initial data will remain BV in dimensions $d>1$ (see \cite{carrillo_1d} and \cite{bubba_perthame} for results in one dimension).  On the other hand, summing the first two equations, one sees that the pressure $p$ and the \emph{total} density  $\rho$ satisfy the parabolic equation
\begin{equation}\label{eq:parabolic_intro}
\partial_t \rho-\nabla \cdot (\rho(\nabla p-V))=\rho_1 \big(F_{1,1}(p,n)+F_{2,1}(p,n)\big)+\rho_2\big( F_{1,2}(p,n)+F_{2,2}(p,n)\big),
\end{equation}
(note (\ref{eq:parabolic_intro}) needs to be coupled with the duality relation $\rho p=z(\rho)+z^*(p)$ in order to fully appreciate the parabolic structure).  Hence, attacking the problem through the pressure appears to be more promising. 

Indeed, recently, several authors have been able to construct solutions to certain cases of (\ref{eq:system}) by exploiting (\ref{eq:parabolic_intro}) to obtain strong convergence of the pressure gradient \cite{gwiazda_perthame, price_xu}.  The strategy of these approaches is to use the parabolic structure to obtain a priori estimates on the pressure that are strong enough to guarantee compactness. In particular, following these approaches, one tries to bound the pressure Laplacian in at least $L^1$ and then obtain some additional (arbitrarily weak) time regularity.  As it turns out, both space and time regularity can be problematic.  It is not clear whether spatial regularity can hold without some structural assumptions on the sources terms $F_{i,j}$ or in the presence of a non-zero vector field $V$.  Time regularity also becomes problematic in the (important) special case where the energy $z$ enforces the incompressibility constraint $\rho\leq 1$.  Indeed, in the incompressible case, the coupling between the total density $\rho$ and the pressure $p$ is degenerate and it is not clear how to convert time regularity for $\rho$ (easy) into time regularity for $p$ (hard).

In this paper, rather than establish the strong convergence of the pressure gradient through regularity, we instead prove it directly by exploiting the energy dissipation relation associated to (\ref{eq:parabolic_intro}).
In order to explain our strategy more fully, we need to introduce a change of variables that will make our subsequent analysis easier.  Thanks to the duality relation $\rho p=z(\rho)+z^*(p)$, the term $\rho\nabla p$ is equivalent to $\nabla z^*(p)$.  
This suggests the natural change of variables $q=z^*(p)$. 
 Since the pressure is only relevant on the set $\rho>0$, we can essentially treat $z^*$ as a strictly increasing function.  As a result, we can completely rewrite the system (\ref{eq:system}) and the parabolic equation (\ref{eq:parabolic_intro}) in terms of $q$ instead of $p$ (c.f. Section \ref{sec:equivalence} and \ref{sec:main} for the rigorous justification).  Doing so, we get the equivalent system 
\begin{equation}\label{eq:system_q}
\begin{cases}
\partial_t \rho_1-\nabla \cdot(\frac{\rho_1}{\rho}\nabla q)+\nabla \cdot (\rho_1 V)=\rho_1 F_{1,1}\big((z^*)^{-1}(q),n\big)+\rho_2 F_{1,2}\big((z^*)^{-1}(q),n\big),\\
\partial_t \rho_2-\nabla \cdot(\frac{\rho_2}{\rho}\nabla q)+\nabla \cdot (\rho_2 V)=\rho_1 F_{2,1}\big((z^*)^{-1}(q),n\big)+\rho_2 F_{2,2}\big((z^*)^{-1}(q),n\big),\\
\rho q=e(\rho)+e^*(q),\\
\partial_t n-\alpha \Delta n=-n(c_1\rho_1+c_2\rho_2),
\end{cases}
\end{equation}
where $e$ is the unique convex function such that 
\[
e(a)=\begin{cases}
az(a)-2\int_0^a z(s)\, ds & \textup{if}\;\; z(a)\neq +\infty,\\
+\infty & \textup{otherwise.}
\end{cases}
\]

It is worth noting that the change of variables from $p$ to $q$ is essentially the reverse direction of Otto's celebrated interpretation of the porous media equation as a $W^2$ gradient flow \cite{otto2001pme}. Indeed, the $p$ variable can be interpreted as a Kantorovich potential for the quadratic optimal transport distance, while the $q$ variable is instead the dual potential for an $H^{-1}$ distance. While the optimal transport interpretation of the system is more physically natural, the linearity of the $H^{-1}$ structure is advantageous for our arguments. Indeed, summing the first two equations of (\ref{eq:system_q}), we get a more linear analogue of (\ref{eq:parabolic_intro}):
\begin{equation}\label{eq:parabolic_q}
\partial_t \rho-\Delta q+ \nabla \cdot (\rho V)=\mu,
\end{equation}
where we have defined $\mu:=\rho_1 \big(F_{1,1}\big((z^*)^{-1}(q),n\big)+ F_{2,1}\big((z^*)^{-1}(q),n\big)\big)+\rho_2\big( F_{1,2}\big((z^*)^{-1}(q),n\big)+F_{2,2}\big((z^*)^{-1}(q),n\big)\big)$ for convenience.  

Now we are ready to give an outline of our strategy. As we mentioned earlier, the key idea is to exploit the energy dissipation relation associated to (\ref{eq:parabolic_q}).  Given any nonnegative test function $\omega\in W^{1,\infty}_c([0,\infty))$ depending on time only, the dissipation relation states that
\begin{equation}\label{eq:edr_intro}
\int_{Q_{\infty}} \omega|\nabla q|^2-e(\rho)\partial_t \omega+\omega e^*(q)\nabla \cdot V-\omega \mu q= \int_{\RR^d} \omega(0)e(\rho^0)
\end{equation}
where $\rho^0$ is the initial total density and we recall that $Q_{\infty}=[0,\infty)\times\RR^d$ is the full space-time domain.  Suppose we have a sequence $(\rho_k, q_k, \mu_k)$ of solutions to equation (\ref{eq:parabolic_q}) with the same initial data $\rho^0$ that converges weakly to a limit point $(\brho, \bq, \bmu)$.  Thanks to the linearity of (\ref{eq:parabolic_q}),  the limit point $(\brho, \bq, \bmu)$   will also be a solution of (\ref{eq:parabolic_q}). As a result, we can expect that both $(\rho_k, q_k, \mu_k)$ and $(\brho, \bq, \bmu)$ satisfy the dissipation relation (\ref{eq:edr_intro}). Hence, we can conclude that
\[
\int_{Q_{\infty}} \omega|\nabla q_k|^2-e(\rho_k)\partial_t \omega+\omega e^*(q_k)\nabla \cdot V-\omega \mu_k q_k=\int_{Q_{\infty}} \omega|\nabla \bq|^2-e(\brho)\partial_t \omega+\omega e^*(\bq)\nabla \cdot V-\omega \bmu \bq,
\]
If we can prove that $e(\rho_k), e^*(q_k)$ converge weakly to $e(\brho), e^*(\bq)$ respectively and 
\begin{equation}\label{eq:qmu_limit_intro}
\limsup_{k\to\infty} \int_{Q_{\infty}} \omega \mu_kq_k\leq \int_{Q_{\infty}} \omega \bmu\bq,
\end{equation}
then we have the upper semicontinuity property
\begin{equation}\label{eq:q_usc}
\limsup_{k\to\infty} \int_{Q_{\infty}} \omega |\nabla q_k|^2\leq \int_{Q_{\infty}} \omega |\nabla q|^2,
\end{equation}
which automatically implies that $\nabla q_k$ converges strongly in $L^2_{\loc}([0,\infty);L^2(\RR^d))$ to $\nabla \bq$.
Thus, the energy dissipation relation gives us a way to upgrade some weak convergence properties into strong gradient convergence.

Of course, in order to exploit this idea, we need:
\begin{enumerate}[(i)]
    \item enough regularity to ensure that the dissipation relation (\ref{eq:edr_intro}) is valid,
    \item enough compactness to prove the weak convergence of the energies $e(\rho_k), e^*(q_k)$,
    \item enough compactness to verify the nonlinear limit (\ref{eq:qmu_limit_intro}).
\end{enumerate}
The amount of a priori regularity needed for (i) is very low, thus, this point does not pose much of a problem. However, obtaining the compactness needed for points (ii) and (iii) is more delicate.  Exploiting convex duality, the weak convergence of the energies $e(\rho_k), e^*(q_k)$ is essentially equivalent to the weak convergence of the product $\rho_k q_k$ (c.f. Proposition \ref{prop:e_e^*}).  While we may not know strong convergence of either $\rho_k$ or $q_k$ separately, we can still obtain the weak convergence of the product through compensated compactness arguments (c.f. Lemma \ref{lem:spacetime_cc}).  When $e^*$ is strictly convex, the weak convergence of the energy $e^*(q_k)$ to $e^*(q)$ actually implies that $q_k$ converges to $q$ locally in measure. Thus, in this case, verifying the limit (\ref{eq:qmu_limit_intro}) becomes trivial.  When the strict convexity of $e^*$ fails, we will still be able to verify the limit (\ref{eq:qmu_limit_intro}) as long as we add the additional structural assumption (F3) on the source terms.

Once we have obtained the strong convergence of the pressure gradient, constructing solutions to the system (\ref{eq:system_q}) (and hence the system (\ref{eq:system})) is straightforward via a vanishing viscosity approach (note adding viscosity to the system is compatible with our energy dissipation based argument). Furthermore,
 the above strategy works even when the energy is allowed to change along the approximating sequence. Hence, we can also use the above arguments to show that solutions to the system (\ref{eq:system}) with the porous media energy $z_m(a)=\frac{1}{m-1}a^m$ converge to the incompressible limit system with the energy $z_{\infty}(a)=0$ if $a\in [0,1]$ and $+\infty$ otherwise.

 \subsection{Main results}
 For the reader's convenience, in this subsection, we collect some of our main results. To prevent the introduction from becoming too bloated, we shall state our results somewhat informally.  The rigorous analogues of these results can be found in Section \ref{sec:main}.
 
 Our first result concerns the case where the density-pressure coupling is non-degenerate i.e. $z$ is differentiable on $(0,\infty)$. 
 \begin{theorem}\label{thm:existence_strict}
Suppose that $z$ is an energy satisfying assumptions (z1-z3) such that $\partial z(a)$ is a singleton for all $a>0$ and suppose that the source terms satisfy assumptions (F1-F2). Given initial data $\rho_1^0, \rho_2^0, n^0$ such that $e(\rho_1^0+\rho_2^0)
\in L^1(\RR^d)$, there exists a weak solution $(\rho_1, \rho_2, p, n)$ to the system (\ref{eq:system}).
 \end{theorem}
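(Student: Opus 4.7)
The plan is to use a vanishing viscosity scheme combined with the energy dissipation strategy sketched in the introduction. First I would regularize \eqref{eq:system_q} by adding a small diffusion $\eps\Delta \rho_i$ to each density equation (and, if necessary, mollifying $z$ slightly so that the Legendre transform is $C^1$ and classical solutions exist). A standard Galerkin or fixed-point construction then produces a globally defined smooth approximate solution $(\rho_1^\eps,\rho_2^\eps,q_\eps,n_\eps)$, with $p_\eps=(z^*)^{-1}(q_\eps)$ recovered by inversion of the differentiable duality relation. The assumption $e(\rho_1^0+\rho_2^0)\in L^1(\RR^d)$ guarantees that the energy starts finite.

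Next, using the energy dissipation identity \eqref{eq:edr_intro} associated to \eqref{eq:parabolic_q}, I would extract uniform bounds: $e(\rho_\eps)\in L^\infty_t L^1_x$, $\nabla q_\eps\in L^2_{\loc}$, and, via \eqref{eq:parabolic_q} itself, enough time regularity on $\rho_\eps=\rho_1^\eps+\rho_2^\eps$ to invoke Aubin--Lions on the total density. The nutrient equation is a linear parabolic equation with bounded coefficients, so $n_\eps$ converges strongly by standard parabolic theory. Extracting weak limits along a subsequence, I obtain $\rho_i^\eps\rightharpoonup \brho_i$, $\nabla q_\eps\rightharpoonup\nabla \bq$, and by the compensated compactness lemma \ref{lem:spacetime_cc} the product $\rho_\eps q_\eps\rightharpoonup \brho\,\bq$. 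By Proposition \ref{prop:e_e^*}, this gives the weak energy convergences $e(\rho_\eps)\rightharpoonup e(\brho)$ and $e^*(q_\eps)\rightharpoonup e^*(\bq)$.

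The key step is the strong $L^2_{\loc}$ convergence of $\nabla q_\eps$. Here I would exploit the hypothesis that $\partial z(a)$ is a singleton on $(0,\infty)$: differentiability of $z$ makes $z^*$ strictly convex on its effective domain, which in turn renders $e^*$ strictly convex. Combined with the weak convergence $e^*(q_\eps)\rightharpoonup e^*(\bq)$, this forces $q_\eps\to \bq$ locally in measure, so by continuity of the $F_{i,j}$ and dominated convergence $\int_{Q_\infty} \omega \mu_\eps q_\eps \to \int_{Q_\infty} \omega \bmu\,\bq$, i.e.\ \eqref{eq:qmu_limit_intro} holds with equality. Applying \eqref{eq:edr_intro} to both the approximating sequence and the limit (both satisfy \eqref{eq:parabolic_q} with the same initial data) and taking $\limsup_\eps$ yields \eqref{eq:q_usc}, which combined with the weak convergence $\nabla q_\eps \rightharpoonup \nabla \bq$ upgrades to strong $L^2_{\loc}$ convergence.

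With strong convergence of $\nabla q_\eps$ in hand, I can pass to the limit in the nonlinear fluxes $(\rho_i^\eps/\rho_\eps)\nabla q_\eps$ and in the source terms, recovering the equations for $(\brho_i,\bq,\bar n)$. Setting $p=(z^*)^{-1}(\bq)$ on $\{\brho>0\}$ and verifying the duality $\brho p=z(\brho)+z^*(p)$ returns the weak solution of the original system \eqref{eq:system}. The main obstacle is the apparent circularity of the compactness argument: one needs weak convergence of the energies, the compensated compactness product identity, and the nonlinear limit \eqref{eq:qmu_limit_intro} to hold simultaneously before the EDR argument can close. The singleton hypothesis on $\partial z$ is precisely what breaks the circle in this theorem, providing the strict convexity of $e^*$ needed to obtain pointwise convergence of $q_\eps$ without recourse to the structural assumption (F3).
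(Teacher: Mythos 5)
Your proposal follows essentially the same route as the paper: vanishing viscosity, the energy dissipation relation giving uniform bounds, compensated compactness (Lemma \ref{lem:spacetime_cc}) for the product $\rho_k q_k$, Proposition \ref{prop:e_e^*} for the weak convergence of the energies, strict convexity of $e^*$ (via Lemma \ref{lem:in_measure}) to get convergence in measure of $q_k$, upper semicontinuity of $\int \omega |\nabla q_k|^2$ to upgrade to strong $L^2$ gradient convergence, and finally passing to the limit and undoing the change of variables via Proposition 5.5. The one imprecision is your chain ``$z$ differentiable $\Rightarrow z^*$ strictly convex $\Rightarrow e^*$ strictly convex''; the paper instead uses $\partial z$ singleton $\iff \partial e$ singleton (Lemma 2.3) and hence $e^*$ strictly convex, which is the cleaner link, but the conclusion is the same.
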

 
 When the density-pressure coupling becomes degenerate, we need to add the additional assumption (F3) on the source terms.  
  \begin{theorem}\label{thm:existence_degenerate}
Suppose that $z$ is an energy satisfying assumptions (z1-z3) and suppose that the source terms satisfy assumptions (F1-F3). Given initial data $\rho_1^0, \rho_2^0, n^0$ such that $e(\rho_1^0+\rho_2^0)
\in L^1(\RR^d)$, there exists a weak solution $(\rho_1, \rho_2, p, n)$ to the system (\ref{eq:system}).
 \end{theorem}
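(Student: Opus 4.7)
My plan is to reduce the degenerate case to the non-degenerate one by regularizing the energy, then pass to the limit using the dissipation strategy outlined in the introduction, with assumption (F3) entering precisely where strict convexity of $e^*$ was used in Theorem~\ref{thm:existence_strict}. Concretely, I would approximate $z$ by a sequence $z_\delta$ of strictly convex, differentiable energies satisfying (z1)--(z3) uniformly in $\delta$ (for example by adding a small quadratic perturbation on the effective domain and mollifying). Theorem~\ref{thm:existence_strict} then furnishes, for each $\delta>0$, a weak solution $(\rho_{1,\delta},\rho_{2,\delta},p_\delta,n_\delta)$ for the energy $z_\delta$; setting $q_\delta = z_\delta^*(p_\delta)$, the sum equation becomes the parabolic equation \eqref{eq:parabolic_q} with linear principal part, and the dissipation identity \eqref{eq:edr_intro} holds with the associated pair $(e_\delta,e_\delta^*)$. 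Applying \eqref{eq:edr_intro} with a compactly supported $\omega$ together with (F1)--(F2), I would extract uniform-in-$\delta$ bounds on $\|\nabla q_\delta\|_{L^2_{\loc}}$, $\sup_t\|e_\delta(\rho_\delta)\|_{L^1}$, and $L^1\cap L^\infty$ controls on the densities and nutrient, so that passing to a subsequence yields weak limits $\rho_{i,\delta}\rightharpoonup\brho_i$, $q_\delta\rightharpoonup\bq$ with $\nabla q_\delta\rightharpoonup\nabla\bq$ in $L^2_{\loc}$, $\mu_\delta\rightharpoonup\bmu$, together with strong $n_\delta\to\bar n$ from parabolic smoothing of the nutrient equation.

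Next I would invoke the space-time compensated-compactness lemma advertised in the introduction to promote these weak convergences to weak convergence of the products $\rho_{i,\delta}q_\delta\rightharpoonup\brho_i\bq$ (applied either to the total sum equation \eqref{eq:parabolic_q} or, with minor modification, to each of the individual density equations of \eqref{eq:system_q}), and hence via the Young--Fenchel identity $\rho q = e(\rho)+e^*(q)$ together with joint lower semicontinuity to weak convergence of the energies $e(\rho_\delta)\rightharpoonup e(\brho)$ and $e^*(q_\delta)\rightharpoonup e^*(\bq)$, exactly as in the non-degenerate case. Applying the dissipation identity \eqref{eq:edr_intro} to both $(\rho_\delta,q_\delta,\mu_\delta)$ and its weak limit then reduces the problem to verifying the one-sided bound \eqref{eq:qmu_limit_intro}. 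This is the precise step where the degenerate case departs from Theorem~\ref{thm:existence_strict}: since $e^*$ need not be strictly convex (e.g.\ $e^*(q)=q_+$ in the incompressible regime), $q_\delta$ no longer converges in measure, and $\mu_\delta q_\delta$ is genuinely nonlinear under weak limits.

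The main obstacle is thus the verification of \eqref{eq:qmu_limit_intro}, for which (F3) is precisely tailored. Rewriting $\mu_\delta = \rho_{1,\delta}G_1(q_\delta,n_\delta)+\rho_{2,\delta}G_2(q_\delta,n_\delta)$ with
\[
G_i(q,n) \;:=\; \big(F_{1,i}+F_{2,i}\big)\big((z_\delta^*)^{-1}(q),n\big),
\]
each $G_i$ is decreasing in $q$ because $(z_\delta^*)^{-1}$ is nondecreasing and by (F3). The pointwise Minty inequality $\big(G_i(q_\delta,n_\delta)-G_i(\bq,n_\delta)\big)(q_\delta-\bq)\le 0$, multiplied by the nonnegative weight $\omega\rho_{i,\delta}$ and integrated, rearranges into
\[
\int \omega\rho_{i,\delta}G_i(q_\delta,n_\delta)\,q_\delta \;\le\; \int \omega\bq\cdot\rho_{i,\delta}G_i(q_\delta,n_\delta) \;+\; \int \omega G_i(\bq,n_\delta)\cdot\rho_{i,\delta}(q_\delta-\bq).
\]
On the right-hand side, $\omega\bq$ is a strong test against the weakly convergent sequence $\rho_{i,\delta}G_i(q_\delta,n_\delta)$, whose sum over $i$ is $\bmu$; while in the second term, $G_i(\bq,n_\delta)\to G_i(\bq,\bar n)$ strongly and $\rho_{i,\delta}(q_\delta-\bq)\rightharpoonup 0$ thanks to the weak-product convergence $\rho_{i,\delta}q_\delta\rightharpoonup\brho_i\bq$ established in the previous paragraph. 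Summing over $i$ and taking $\limsup$ therefore delivers exactly \eqref{eq:qmu_limit_intro}. This closes the dissipation argument, forcing $\nabla q_\delta\to\nabla\bq$ strongly in $L^2_{\loc}$, which in turn allows passage to the limit in the fluxes $(\rho_{i,\delta}/\rho_\delta)\nabla q_\delta$ of \eqref{eq:system_q}; transforming back to the $(p,\rho,n)$ variables via the duality $\brho\bq=e(\brho)+e^*(\bq)$ then recovers a weak solution of \eqref{eq:system}.
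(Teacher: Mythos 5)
Your proposal is essentially correct and reproduces the paper's overall architecture (reduce to the $q$-system, extract weak limits and uniform bounds from the energy dissipation relation, establish weak convergence of $\rho_k q_k$ via the compensated-compactness lemma, use Proposition~\ref{prop:e_e^*} for the energies, then close via the dissipation identity to get strong $L^2_{\loc}$ convergence of $\nabla q_k$). Where you genuinely depart from the paper is in the way you exploit (F3) to prove the upper-semicontinuity bound~\eqref{eq:qmu_limit_intro}. The paper's Step~3b of Theorem~\ref{thm:main} first space--time mollifies $q_k$, constructs an auxiliary convex function $f_k(q,n)$ whose $q$-derivative is $-(F_{1,j}+F_{2,j})\circ(z_k^*)^{-1}$, passes to a uniform limit $f_k\to f$, and then controls a Bregman-type divergence by pairing with $f^*$ against the weak convergence of $\rho_{j,k}q_k$. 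You instead apply the two-point Minty inequality $\bigl(G_i(q_\delta,n_\delta)-G_i(\bq,n_\delta)\bigr)(q_\delta-\bq)\le 0$ directly against the weak limit $\bq$, rearrange, and push each of the two terms to its limit using $\mu_\delta\rightharpoonup\bmu$ and $\rho_{i,\delta}(q_\delta-\bq)\rightharpoonup 0$. This is a cleaner implementation of the same monotonicity idea: it replaces the construction of $f_k$ and the intermediate mollification $q_{k,\delta}$ with one inequality, at the price of requiring that $\bq$ pair admissibly with $\mu_\delta$ and $\rho_{i,\delta}$ (justified by the bounds in Proposition~\ref{prop:estimates}) and that $G_{i,\delta}(\bq,n_\delta)\to G_i(\bq,\bar n)$ strongly enough to kill the second term. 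The latter convergence requires the same care near $q=0$ and $q=z^*(b_\infty)$ that the paper handles in Lemma~\ref{lem:source_limit} and Lemma~\ref{lem:convex_convergence_2} (where $(z_k^*)^{-1}$ only converges uniformly on compact subsets of $(0,z^*(b_\infty))$), and you should appeal to the duality relation and the weight $\rho_{i,\delta}$ to control the degenerate regions as the paper does; this is a technicality, not a gap in the idea.

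Two secondary points worth noting. First, your reduction to Theorem~\ref{thm:existence_strict} by regularizing $z$ to a strictly convex differentiable $z_\delta$ is sound in spirit, but the paper avoids it: since Theorem~\ref{thm:main} already allows both $\gamma_k\to 0$ and a changing energy $e_k$, the paper just runs vanishing viscosity directly on the fixed degenerate energy (Corollary~\ref{cor:existence_G4}); also note Theorem~\ref{thm:main} asks for $\nabla\rho_{i,k}\in L^2_{\loc}$, which the viscous approximations supply but the $\gamma=0$ solutions produced by Theorem~\ref{thm:existence_strict} need not, so your scheme should really interleave the energy regularization with a vanishing viscosity parameter (a diagonal argument) rather than invoke Theorem~\ref{thm:existence_strict} as a black box. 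Second, to finish, one must still identify the limiting source terms (the paper's Lemma~\ref{lem:source_limit}), pass to the limit in the flux $\frac{\rho_{i,k}}{\rho_k}\nabla q_k$ (via Lemma~\ref{lem:chain_rule_convergence}), and convert the $q$-solution back to a $p$-solution (Proposition~\ref{prop:edr} and the final Proposition in Section~\ref{sec:main}); you gesture at these but do not carry them out. None of these omissions is fatal, and your Minty-inequality version of Step~3b is a legitimate and slicker alternative to the paper's argument.
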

 
In addition to our existence results, we also show that solutions of the system with the porous media energy $z_m(a):=\frac{1}{m-1}a^m$ converge to a solution of the system with the incompressible energy 
\[z_{\infty}(a):=
\begin{cases}
0& \textup{if} \;\;a\in [0,1] \\
+\infty & \textup{otherwise} \\
\end{cases}
\]
as $m\to\infty$.
\begin{theorem}\label{thm:incompressible_limit}
Let $\rho_1^0, \rho_2^0, n^0$ be initial data such that $\rho_1^0+\rho_2^0\leq 1$ almost everywhere. Suppose that the source terms satisfy (F1-F3). If $(\rho_{1,m}, \rho_{2,m}, p_m, n_m)$ is a sequence of solutions to the system (\ref{eq:system}) with the energy $z_m$ and the fixed initial data $(\rho_1^0, \rho_2^0,n^0)$, then there exists a limit point of the sequence $(\rho_{1,\infty},\rho_{2,\infty}, p_{\infty}, n_{\infty})$ that solves the system (\ref{eq:system}) with the incompressible energy $z_{\infty}$.
\end{theorem}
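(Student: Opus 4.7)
My strategy is to apply the energy-dissipation framework sketched in the introduction uniformly along the family indexed by $m$, treating the variation of the energy itself as part of the passage. After performing the change of variables $q_m=z_m^*(p_m)$, a direct computation from the formula defining $e$ gives $e_m(a)=a^{m+1}/(m+1)$ and $e_m^*(q)=\tfrac{m}{m+1}q^{(m+1)/m}$ (for $q\ge 0$); these converge in the Mosco/$\Gamma$ sense, with the relevant pointwise and variational properties, to $e_\infty=I_{[0,1]}$ and $e_\infty^*(q)=q^+$. Each quadruple $(\rho_{1,m},\rho_{2,m},q_m,n_m)$ solves the reformulated system (\ref{eq:system_q}) with energy $e_m$, and the goal is to show a limit point solves the same system with $e_\infty$.

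The first step is $m$-uniform a priori bounds. Since $\rho_1^0+\rho_2^0\le 1$, the initial energy $\int e_m(\rho^0)$ is bounded by $|\spt(\rho^0)|/(m+1)$ and therefore vanishes. Bootstrapping the dissipation identity (\ref{eq:edr_intro}) using (F1) to control $\mu_m$ and a Gronwall argument to absorb $\int\omega\mu_m q_m$ yields uniform bounds on $\|\nabla q_m\|_{L^2_\loc}$, $\int e_m(\rho_m)$, and $\int e_m^*(q_m)$. The first of these reads $\int \rho_m^{m+1}\le C(m+1)$, which by a standard $L^p$-as-$p\to\infty$ argument forces any weak limit to satisfy $\rho_\infty\le 1$ almost everywhere. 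Extract a subsequence with $\rho_{i,m}\rightharpoonup\rho_{i,\infty}$, $q_m\rightharpoonup\bar q$, $\nabla q_m\rightharpoonup\nabla\bar q$ weakly, and $n_m\to n_\infty$ strongly via parabolic regularity of the nutrient equation.

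Next, upgrade to strong convergence of $\nabla q_m$. The linear parabolic equation (\ref{eq:parabolic_q}) survives the limit in $m$, so the space-time compensated compactness lemma (\ref{lem:spacetime_cc}) yields weak convergence of $\rho_m q_m$ to $\rho_\infty\bar q$. Combined with the duality $\rho_m q_m=e_m(\rho_m)+e_m^*(q_m)$ and the Mosco convergence $e_m\to e_\infty$, this gives $e_m(\rho_m)\rightharpoonup e_\infty(\rho_\infty)=0$ and $e_m^*(q_m)\rightharpoonup e_\infty^*(\bar q)=\bar q$ weakly. Since $e_\infty^*$ fails to be strictly convex, invoke assumption (F3) exactly as in Theorem \ref{thm:existence_degenerate} via a Minty monotonicity argument to check $\limsup_{m\to\infty}\int\omega\mu_m q_m\le\int\omega\bar\mu\bar q$. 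Substituting these convergences into the dissipation identities for the sequence and its limit yields the upper semicontinuity (\ref{eq:q_usc}), and hence strong $L^2_\loc$ convergence of $\nabla q_m$ to $\nabla\bar q$.

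Finally, pass to the limit in each equation. The cross-diffusion flux, rewritten as $\nabla\cdot(\theta_{i,m}\nabla q_m)$ with $\theta_{i,m}=\rho_{i,m}/\rho_m\in[0,1]$ converging weak-$\ast$, combines with the strong $L^2$ convergence of $\nabla q_m$ to give the correct limit; the source terms are handled via (F3) and the aforementioned Minty argument; the drift term passes by weak convergence. The limit pressure $p_\infty$ is recovered from the complementarity $p_\infty(1-\rho_\infty)=0$, $p_\infty\ge 0$, obtained by passing to the limit in the duality $\rho_m p_m=z_m(\rho_m)+z_m^*(p_m)$. The main obstacle lies in the compensated-compactness/duality step: because $e_m$ and $e_m^*$ degenerate in the limit, one must verify that the product convergence $\rho_m q_m\rightharpoonup\rho_\infty\bar q$ and the subsequent convex-duality identification of the weak limits of $e_m(\rho_m)$ and $e_m^*(q_m)$ both survive the variable-convexity passage, and that the dissipation bounds are strong enough to bridge the $m$-dependent Young-function gap uniformly.
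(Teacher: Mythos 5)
Your overall strategy matches the paper's: change variables to $q_m=z_m^*(p_m)$, derive $m$-uniform bounds from the dissipation relation, use the space-time compensated compactness Lemma~\ref{lem:spacetime_cc} to pass $\rho_m q_m\rightharpoonup\rho_\infty\bar q$, apply Proposition~\ref{prop:e_e^*} to identify the weak limits of $e_m(\rho_m),e_m^*(q_m)$, invoke (F3) via a monotonicity argument to obtain the upper-semicontinuity of $\int\omega\mu_m q_m$, and conclude strong $L^2_\loc$ convergence of $\nabla q_m$. The paper organizes this through Theorem~\ref{thm:main} and its corollaries, but the ingredients are the same.

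There is, however, a genuine gap in your final step. You pass to the limit in the cross-diffusion flux by writing $\theta_{i,m}=\rho_{i,m}/\rho_m$, noting $\theta_{i,m}\in[0,1]$ ``converges weak-$*$,'' and pairing this with the strong $L^2$ convergence of $\nabla q_m$. This does give weak convergence of the product $\theta_{i,m}\nabla q_m$, but to $\theta_i\nabla\bar q$ for whatever weak-$*$ limit $\theta_i$ the ratio happens to have — and there is no reason a priori that $\theta_i=\rho_{i,\infty}/\rho_\infty$ on the support of $\nabla\bar q$. The weak convergences $\rho_{i,m}\rightharpoonup\rho_{i,\infty}$ and $\rho_m\rightharpoonup\rho_\infty$ do not determine the weak-$*$ limit of the quotient, and this is precisely the difficulty the paper's Step~4 is designed to overcome. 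The paper writes
\[
\frac{\rho_{i,k}}{\rho_k}\nabla q_k=\rho_{i,k}\cdot\frac{\nabla q_k}{\rho_k+\delta}+\frac{\delta\rho_{i,k}}{\rho_k(\rho_k+\delta)}\nabla q_k,
\]
proves via Lemma~\ref{lem:chain_rule_convergence} that $\int_D|\rho-\rho_k||\nabla q|^2\to 0$ (which upgrades $\frac{\nabla q_k}{\rho_k+\delta}$ to a strongly $L^2$-convergent sequence with limit $\frac{\nabla q}{\rho+\delta}$), pairs this with the genuine weak convergence $\rho_{i,k}\rightharpoonup\rho_{i,\infty}$, and then controls the remainder as $\delta\to 0$ using the fact that $|\nabla q|$ vanishes a.e.\ on $\{\rho=0\}$ (which follows from $\limsup_{a\to 0^+}e(a)/a=0$ and the chain rule for Sobolev functions). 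Your proposal identifies the delicacy as lying in the compensated-compactness/duality step, but that part actually goes through cleanly; the real obstruction is in identifying the limit of $\rho_{i,m}/\rho_m$ against $\nabla q_m$, and your one-line dismissal of it is where the argument would stall.
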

 Theorem \ref{thm:incompressible_limit} is just a special case of our more general convergence result, Theorem \ref{thm:main}, which shows that one can extract limit solutions for essentially any reasonable sequence of energies.  Nonetheless, the statement of Theorem \ref{thm:main} is a bit too complicated to be cleanly summarized in the introduction, so we leave it to be stated for the first time in Section \ref{sec:main}.

\subsection{Limitations and other directions}
Unfortunately, our approach cannot handle the more challenging case where $\rho_1, \rho_2$ have different mobilities or where $\rho_1, \rho_2$ flow along different vector fields $V_1, V_2$. These cases are known to be extremely difficult, however see \cite{Kim_Meszaros} and \cite{kim_tong} for some partial results.  When the mobilities are different, the analogue of (\ref{eq:parabolic_q}) is a nonlinear parabolic equation with potentially discontinuous coefficients.  As a result, one cannot do much with the limiting variables $\brho, \bq$. When the densities flow along different vector fields, verifying the upper semicontinuity property (\ref{eq:q_usc}) requires proving the weak convergence of the terms $\rho_{1,k} \nabla q_k$ and $\rho_{2,k}\nabla q_k$. Since this essentially requires knowing strong compactness for $\nabla q_k$ in the first place, it completely defeats the purpose of the argument.  

Nonetheless, it would be interesting to see if this strategy could be applied to other systems of equations that have some parabolic structure.  For instance, if $\{W_{i,j}\}_{i,j\in \{1,2\}}$ are convolution kernels whose symbols are dominated by $(-\Delta)^{1/2}$ i.e. $\limsup_{|\xi|\to \infty}\frac{|\hat{W}_{i,j}(\xi)|}{|\xi|}=0$, then 
it should be possible to extend our arguments to the more general system 
 \begin{equation}\label{eq:future_system_q}
\begin{cases}
\partial_t \rho_1-\nabla \cdot(\frac{\rho_1}{\rho}\nabla q)+\nabla \cdot (\rho_1 V)+W_{1,1}*\rho_1+W_{1,2}*\rho_2=\rho_1 F_{1,1}\big((z^*)^{-1}(q),n\big)+\rho_2 F_{1,2}\big((z^*)^{-1}(q),n\big),\\
\partial_t \rho_2-\nabla \cdot(\frac{\rho_2}{\rho}\nabla q)+\nabla \cdot (\rho_2 V)+W_{2,1}*\rho_1+W_{2,2}*\rho_2=\rho_1 F_{2,1}\big((z^*)^{-1}(q),n\big)+\rho_2 F_{2,2}\big((z^*)^{-1}(q),n\big),\\
\rho q=e(\rho)+e^*(q),\\
\partial_t n-\alpha \Delta n=-n(c_1\rho_1+c_2\rho_2),
\end{cases}
\end{equation}
(perhaps with some other mild requirements on the $W_{i,j}$), however, we will not pursue this line of inquiry further in this work.

\subsection{Paper outline}
The rest of the paper is organized as follows.   In Section \ref{sec:equivalence}, we explore some of the consequences of the change of variables $q=z^*(p)$.  After this Section, we will focus only on the transformed system (\ref{eq:system_q}) until Section \ref{sec:main}.  In Section \ref{sec:cc}, we provide some generic convex analysis and compensated compactness arguments needed for the weak convergence of the primal and dual energies.  In Section \ref{sec:estimates}, we analyze parabolic PDEs, establishing basic estimates and the energy dissipation relation.  Finally, in Section \ref{sec:main}, we combine our work to prove the main results of the paper.

\

\section{The transformation $q=z^*(p)$}\label{sec:equivalence}
In this section, we will explore some of the consequences of the transformation $q=z^*(p)$.  Note that the full verification of the equivalence between the systems (\ref{eq:system}) and (\ref{eq:system_q}) will not occur until the final section, Section \ref{sec:main}.  Before we begin our work in this section, let us give a bit more motivation for introducing this change of variables.  First of all, the spatial derivative in the parabolic equation (\ref{eq:parabolic_q}) is linear with respect to $q$, whereas the spatial derivative in parabolic equation for the $p$ variable (\ref{eq:parabolic_intro}) is not.  As a result, establishing the strong $L^2$ gradient compactness for $q$ is simpler than for $p$.  Furthermore, the $q$ variable is always nonnegative, while certain choices of $z$ will lead to a $p$ variable that is not bounded from below.  The lack of lower bounds on $p$ leads to some very annoying integrability issues that are completely absent when one works with $q$ instead. 

 We begin by establishing the fundamental properties of the transformation $q=z^*(p)$.  In particular, we will show that the transformation is essentially invertible.
 \begin{lemma}
If $z$ is an energy satisfying (z1-z3), then $z^*$ is nonnegative, nondecreasing, and $(z^*)^{-1}$ is well defined and Lipschitz on $z^*(\RR)\cap (0,\infty)$.
\end{lemma}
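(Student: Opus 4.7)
The plan is to establish the four claims -- nonnegativity, monotonicity, well-definedness of the inverse, and its Lipschitz property -- in turn. The first two use only (z2), while the latter two are the substantive part and are where (z3) is actually invoked. For nonnegativity and monotonicity, note that (z2) forces $\mathrm{dom}(z)\subseteq[0,\infty)$ and $z(0)=0$. The defining supremum $z^*(p)=\sup_a[ap-z(a)]$ is then effectively taken over $a\geq 0$; plugging in $a=0$ gives $z^*(p)\geq -z(0)=0$, and for each such $a$ the integrand $ap-z(a)$ is nondecreasing in $p$, so $z^*$ itself is nondecreasing.

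To locate $\{z^*=0\}$, I would set $p_0:=\sup\partial z(0)$ and use the characterisation $\partial z(0)=\{p:z(a)\geq pa\text{ for every }a\}$, which together with $z(0)=0$ gives $z^*(p)=0\iff p\in\partial z(0)\iff p\leq p_0$. For strict monotonicity on the complementary positivity set, I would argue by contradiction: if $z^*$ were constant on some $[p_1,p_2]$ with $p_1>p_0$, convexity would force $0\in\partial z^*(p)$ on $(p_1,p_2)$, and the Legendre duality $a\in\partial z^*(p)\iff p\in\partial z(a)$ would then yield $p\in\partial z(0)$, i.e.\ $p\leq p_0$, contradicting $p>p_1$. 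This makes $(z^*)^{-1}\colon z^*(\RR)\cap(0,\infty)\to(p_0,\infty)\cap\mathrm{dom}(z^*)$ a well-defined function.

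For the Lipschitz bound, I would invoke (z3) to choose $a_0>0$ with $z$ differentiable at $a_0$ and $z'(a_0)>p_0$. Monotonicity of the subdifferential then gives $\min\partial z(a)\geq z'(a_0)$ for every $a\geq a_0$; dualising via the Legendre correspondence, for every $p\geq z'(a_0)$ each element of $\partial z^*(p)$ is at least $a_0$. In other words, on $[z'(a_0),\infty)\cap\mathrm{dom}(z^*)$ the convex function $z^*$ has slope bounded below by the positive constant $a_0$, from which $(z^*)^{-1}$ inherits a Lipschitz constant of $1/a_0$ on the corresponding image.

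The subtle point -- and what I anticipate as the main obstacle to a fully global statement -- is the behaviour as $q\to 0^+$: nothing in (z3) forbids $\partial z^*(p)\to\{0\}$ as $p\to p_0^+$, and for the porous media energy $z(a)=a^m/(m-1)$ one computes $(z^*)^{-1}(q)\sim q^{(m-1)/m}$, which is only H\"older near $0$. I would therefore read the Lipschitz conclusion as holding on every subset of $z^*(\RR)\cap(0,\infty)$ bounded away from $0$; the subdifferential argument above produces exactly such a uniform constant on each $z^*(\RR)\cap[\delta,\infty)$, which is the form the rest of the paper needs when inverting the change of variables $q=z^*(p)$.
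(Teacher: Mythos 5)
Your proof is correct, and it reaches the conclusions by a genuinely different path than the paper. For strict monotonicity where $z^*>0$, the paper runs a limiting argument to show $\liminf_{n\to\infty}z^*(-n)=0$ (picking a decreasing sequence $a_n\in[0,a_0]$ and extracting a convergent subsequence), which then yields $b_0<b$ with $2z^*(b_0)\leq z^*(b)$ and hence $\inf\partial z^*(b)\geq z^*(b)/(2(b-b_0))>0$. You instead identify $\{z^*=0\}=\partial z(0)$ directly and invoke the Legendre correspondence $0\in\partial z^*(p)\iff p\in\partial z(0)$, so that $z^*$ can only be flat where it vanishes. Your route is shorter and more structural, and it makes explicit use of (z3) via the inequality $z'(a_0)>\sup\partial z(0)$ to produce a uniform slope bound on the tail $[z'(a_0),\infty)$; the paper's computation, by contrast, only uses finiteness of $z$ at one positive point and produces a $b$-dependent lower bound. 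Both arguments establish the same pointwise positivity of $\inf\partial z^*$.

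Your closing observation is also well taken: the Lipschitz constant of $(z^*)^{-1}$ genuinely can blow up as $q\to 0^+$ (the porous-media example shows $(z^*)^{-1}(q)-p_0\sim\frac{m}{m-1}q^{(m-1)/m}$), so the lemma's ``Lipschitz on $z^*(\RR)\cap(0,\infty)$'' should be read as locally Lipschitz, i.e.\ Lipschitz on $z^*(\RR)\cap[\delta,\infty)$ for each $\delta>0$. The paper's own proof yields exactly this (the bound $\inf\partial z^*(b)\geq z^*(b)/(2(b-b_0))$ degenerates as $z^*(b)\to 0$), and the subsequent second lemma and Lemma on uniform convergence of $(z_k^*)^{-1}$ both work only on sets bounded away from $q=0$, confirming that the intended reading is the one you propose. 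So your account matches what the paper actually proves and uses, while rephrasing the hard step through convex-duality identities rather than the $\liminf z^*(-n)=0$ computation.
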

\begin{proof}
Given any $b\in \RR$, we have 
\[
z^*(b)=\sup_{a\in \RR} ab-z(a)\geq 0-z(0)=0.
\]
 It is also clear that $\inf \partial z^*(b)\geq 0$ since $z(a)=+\infty$ for any $a<0$.  If $b_1<b_2$, then $z^*(b_2)-z^*(b_1)\geq a_1(b_2-b_1)\geq 0$ where $a_1$ is any element of $\partial z^*(b_1)$. Thus, $z^*$ is both nonnegative and nondecreasing. 

Since $z$ is proper, we know that $z(a)\neq -\infty$ for all $a$.  Thus given some $a_0>0$, there must exist some $b_0\in \RR$ such that $b_0\leq \frac{z(a_0)}{a_0}$. It then follows that for all $a\geq a_0$
\[
ab_0-z(a)\leq ab_0-z(a_0)-(a-a_0)\frac{z(a_0)}{a_0}=a(b_0-\frac{z(a_0)}{a_0})\leq 0.
\]
Therefore, for all $b\leq b_0$
\[
\sup_{a\in\RR} ab-z(a)=\sup_{a\in [0,a_0]} ab-z(a).
\]

Fix $\epsilon>0$ and let $a_n\in [0,a_0]$ be a decreasing sequence such that $z^*(-n)\leq \epsilon-n a_n-z(a_n)$ (note that from the above logic such choices of $a_n$ must exist once $n$ is sufficiently large).  Since $a_n$ is decreasing and bounded from below, it must converge to a limit point $\bar{a}$ as $n\to\infty$. Thus, 
\[
0\leq \liminf_{n\to \infty} z^*(-n)\leq \epsilon-z(\bar{a})-\limsup_{n\to\infty} na_n,
\]
which immediately implies that $\bar{a}=0$. We can then rewrite the above as
\[
 \liminf_{n\to \infty} z^*(-n)\leq \epsilon-\limsup_{n\to\infty} na_n\leq \epsilon.
 \]
 Therefore, $\liminf_{n\to\infty} z^*(-n)=0.$
 
It now follows that if $z^*(b)\in (0,\infty)$, then there must exist some $b_0<b$ such that $2z^*(b_0)\leq z^*(b)$. We then have
\[
\inf \partial z^*(b)\geq \frac{z^*(b)}{2(b-b_0)}>0.
\]
Thus, $z^*$ is strictly increasing at $b$ whenever $z^*(b)\in (0,\infty)$.  Hence $(z^*)^{-1}$ is well defined and Lipschitz on $z^*(\RR)\cap (0,\infty)$. 
\end{proof}

While the invertibility of $q=z^*(p)$ can fail when $z^*(p)=0$, this will not cause a problem for our study of the systems (\ref{eq:system}) and (\ref{eq:system_q}), as the failure cannot happen on the support of $\rho$.

\begin{lemma} Suppose that $z$ satisfies assumptions (z1-z3).
If $(z^*)^{-1}$ cannot be extended to a continuous function on $[0,\infty)\cap z^*(\RR)$, then $\partial z^*(p)=\{0\}$ whenever $z^*(p)=0$.
\end{lemma}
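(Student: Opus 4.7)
The plan is to argue by contrapositive: I will show that if there exists $p^*\in\RR$ with $z^*(p^*)=0$ and $\partial z^*(p^*)\neq\{0\}$, then $(z^*)^{-1}$ admits a continuous extension to $[0,\infty)\cap z^*(\RR)$.

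First I would analyze the zero set $S:=\{p\in\RR : z^*(p)=0\}$. By the previous lemma, $z^*$ is convex, nonnegative, and nondecreasing, so $S$ is an interval of the form $(-\infty,p_0]$ with $p_0\in\RR$ (note $S$ is nonempty by hypothesis, and $p_0<+\infty$ since $z^*\not\equiv 0$). If $p^*<p_0$, then $z^*\equiv 0$ on an open neighborhood of $p^*$, which forces $\partial z^*(p^*)=\{0\}$ and contradicts the assumption. Hence $p^*=p_0$, and $\partial z^*(p_0)$ must contain some $\alpha>0$.

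Next I would verify that $p_0$ lies in the interior of $\{z^*<+\infty\}$, so that $z^*$ is continuous at $p_0$. If instead $z^*=+\infty$ on $(p_0,\infty)$, then by convex duality $z=z^{**}$ would be affine of slope $p_0$ on $[0,\infty)$, giving $\sup\partial z(0)=p_0=z'(a)$ for every $a>0$, in contradiction with (z3). With continuity of $z^*$ at $p_0$ in hand, monotonicity together with the fact that $z^*(p)>0$ for all $p>p_0$ yields $(z^*)^{-1}(q)\to p_0$ as $q\to 0^+$; in fact the subgradient inequality $z^*(p)\geq\alpha(p-p_0)$ even gives the sharper bound $p_0<(z^*)^{-1}(q)\leq p_0+q/\alpha$. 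Setting $(z^*)^{-1}(0):=p_0$ and combining with the Lipschitz continuity on $z^*(\RR)\cap(0,\infty)$ from the previous lemma produces the required continuous extension on $[0,\infty)\cap z^*(\RR)$, contradicting the hypothesis.

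The main subtlety I anticipate is the interior-of-domain step: one has to rule out the degenerate case $p_0=\sup\{z^*<+\infty\}$, which via Fenchel--Moreau corresponds precisely to $z$ being affine on $[0,\infty)$. Assumption (z3) is tailored to forbid this scenario and is what allows the continuity of $z^*$ at $p_0$ (and hence the limit argument) to go through. Once that is in place, the remainder of the proof is a routine combination of convexity, monotonicity, and the subgradient inequality.
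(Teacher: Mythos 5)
Your proof is correct and follows essentially the same contrapositive argument as the paper: introduce $p_0=\sup\{p:z^*(p)=0\}$, note that the only point where the conclusion could fail is $p_0$, take $\alpha>0$ in $\partial z^*(p_0)$, and use the subgradient inequality $z^*(p)\geq\alpha(p-p_0)$ to obtain $p_0<(z^*)^{-1}(q)\leq p_0+q/\alpha$, which forces continuity of the extension at $0$. The one place you diverge is the degenerate case $z^*(\RR)\cap(0,\infty)=\varnothing$: you invoke (z3) together with Fenchel--Moreau to rule it out entirely, whereas the paper simply observes that in this case the domain $[0,\infty)\cap z^*(\RR)$ is the singleton $\{0\}$, on which any extension is trivially continuous --- so your (z3) step is valid but not actually needed, and the paper's proof in fact goes through under (z1)--(z2) alone.
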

\begin{proof}
Let $p_0=\sup\{p\in \RR: z^*(p)=0\}$. If $p_0=-\infty$, then the statement is vacuously true.

Otherwise, we define $(z^*)^{-1}(0)=p_0$. If $z^*(\RR)\cap [0,\infty)=\{0\}$, then $(z^*)^{-1}$ is trivially continuous on $[0,\infty)\cap z^*(\RR)$.  Thus, we only need to worry about the case where $z^*(\RR)\cap (0,\infty)\neq \varnothing$ and
 there exists $a_0\in \partial z^*(p_0)$ such that $a_0>0$.   Convexity then implies that for any $p>p_0$ with $z^*(p)\neq +\infty$ we have $\inf\partial z^*(p)\geq a_0$. Thus, the Lipschitz constant of $(z^*)^{-1}$ must be bounded in a neighborhood of zero and therefore the extension $(z^*)^{-1}(0)=p_0$ must be continuous.
\end{proof}

Perhaps the most significant aspect of the change of variables $q=z^*(p)$ is the change in the energy controlling the primal and dual coupling.  Recall that we defined the new energy $e$ through the formula
\begin{equation}\label{eq:e_def}
e(a)=\begin{cases}
az(a)-2\int_0^a z(s)\, ds & \textup{if}\;\; z(a)\neq +\infty,\\
+\infty & \textup{otherwise.}
\end{cases}
\end{equation} 
While this formula appears somewhat mysterious, $e$ is the unique (up to an irrelevant constant factor) convex function such that  $\partial e(a)=z^*\circ \partial z(a)$ when $\partial z(a)\neq \varnothing$. Thus, when $p\in \partial z(\rho)$ we will know that $q\in \partial e(\rho)$.    Note that the monotonicity of $z^*$ is key, otherwise $e$ would fail to be convex. The following Lemma records the properties that $e$ inherits from $z$.

\begin{lemma}
Suppose that $z$ is an energy satisfying (z1-z3). If we define $e:\RR\to\RR\cup\{+\infty\}$ according to (\ref{eq:e_def}),
then $e$ satisfies the following properties
\begin{enumerate}[(e1)]
\item  $e:\RR\to\RR\cup\{+\infty\}$ is proper, convex, and lower semicontinuous.
\item $e(a)=+\infty$ if $a<0$, $e(0)=0$,  and $e$ is increasing on $e^{-1}(\RR)$.
\item $\limsup_{a\to 0^+} \frac{e(a)}{a}=0,$ and $\liminf_{b\to\infty} \frac{e^*(b)}{b}>0$.  
\end{enumerate}
Furthermore, if $a\neq 0$, we have
\[
\partial e(a)=\{ab-z(a):b\in \partial z(a)\}=\{z^*(b):b\in \partial z(a)\},
\]
and so $\partial e(a)$ is a singleton if and only if $\partial z(a)$ is a singleton. 
\end{lemma}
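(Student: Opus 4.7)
The plan is to reduce everything to the subdifferential identity $\partial e(a)=\{z^*(b):b\in\partial z(a)\}$ for $a>0$, from which convexity, monotonicity, and the singleton criterion all follow. The two properties in (e3) are then short asymptotic computations, and lower semicontinuity is immediate from the integral representation combined with the lsc of $z$.

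I would begin with the subgradient identity. Fix $a>0$ with $z(a)<\infty$ and $b\in\partial z(a)$. Starting from
\[
e(a')-e(a)=a'z(a')-az(a)-2\int_a^{a'}z(s)\,ds,
\]
apply the subgradient inequality $z(s)\geq z(a)+b(s-a)$ to bound $a'z(a')$ from below by $a'[z(a)+b(a'-a)]$, and, with the appropriate sign flip for $0\leq a'<a$ versus $a'>a$, bound the integral term from the correct side. After telescoping, the cross terms collapse exactly to $(a'-a)(ab-z(a))=(a'-a)z^*(b)$, proving $z^*(b)\in\partial e(a)$. For the reverse inclusion, differentiating the integral representation of $e$ at $a$ gives $e'_{\pm}(a)=az'_{\pm}(a)-z(a)=z^*(z'_{\pm}(a))$ by Fenchel--Young, so the one-sided derivatives of $e$ are precisely the images under the nondecreasing function $z^*$ of the one-sided derivatives of $z$; since both subdifferentials are intervals, the identity follows.

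With the subdifferential formula in hand, the rest comes out easily. For (e1), convexity holds because $z^*\circ\partial z$ is the composition of monotone (set-valued) maps, and a one-variable function with monotone subdifferential is convex; lower semicontinuity reduces to the lsc of $a\mapsto az(a)$ and the continuity of $a\mapsto\int_0^a z$. For (e2), $e(0)=0$ and $e\equiv+\infty$ on $(-\infty,0)$ are immediate from the formula, while $\partial e\subseteq[0,\infty)$ (since $z^*\geq 0$ by the first lemma) gives monotonicity. For (e3), convex interpolation between $z(0)=0$ and the point $z(a_0)<\infty$ furnished by (z3) shows that $z(a)\to 0$ as $a\to 0^+$, so both $z(a)$ and $a^{-1}\int_0^a z$ tend to $0$ and hence $e(a)/a\to 0$; and the linear lower bound $e^*(b)\geq a_0b-e(a_0)$ yields $\liminf_{b\to\infty}e^*(b)/b\geq a_0>0$. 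Finally, for the singleton equivalence, the Fenchel--Young identity $z^*(b)=ab-z(a)$ shows that $z^*$ is affine of slope $a>0$ on the interval $\partial z(a)$, so it restricts to a bijection between $\partial z(a)$ and $\partial e(a)$.

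The main obstacle is the bookkeeping in the subgradient computation, particularly keeping track of the sign of $a'-a$ and the corresponding direction of the integral inequality; once the identity $\partial e(a)=z^*(\partial z(a))$ is in place, everything else is routine convex analysis.
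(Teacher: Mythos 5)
Your proof is correct and follows essentially the same approach as the paper: both derive the subdifferential identity $\partial e(a)=z^*(\partial z(a))$ from the supporting-line inequality together with the one-sided derivative formula $e'_{\pm}(a)=az'_{\pm}(a)-z(a)$, and then read off convexity, monotonicity, the asymptotics in (e3), and the singleton criterion. One small refinement over the paper's write-up: you verify the supporting-line inequality $e(a')\geq e(a)+(a'-a)z^*(b)$ for both $a'>a$ and $a'<a$, whereas the paper's displayed chain of inequalities only carries out the case $a_1>a_0$; checking both sides, as you do, is what actually pins down $z^*(b)\in\partial e(a)$ and hence convexity.
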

\begin{proof}
It is clear that $e(0)=0$ and $e(a)=+\infty$ if $z(a)=+\infty$.

Given any two points $a_0, a_1\in z^{-1}(\RR)$, convexity implies that
\begin{equation}\label{eq:integral_convexity}
2(a_1-a_0)z(\frac{a_1+a_0}{2})\leq 2\int_{a_0}^{a_1} z(s)\, ds\leq (a_1-a_0)(z(a_0)+z(a_1)).
\end{equation}
Thus, if $z(a)\neq +\infty$, then
\[
0 \leq e(a)\leq az(a)-2az(\frac{a}{2})<\infty.
\]
Therefore $e(a)=+\infty$ if and only if $z(a)=+\infty$.  Thus, the set $e^{-1}(\RR)$ is an interval. Furthermore, the above inequalities combined with $(z3)$ clearly imply that $\limsup_{a\to 0^+}\frac{e(a)}{a}=0.$

Again using (\ref{eq:integral_convexity}),
\[
e(a_1)-e(a_0)=a_0(z(a_1)-z(a_0))+(a_1-a_0)z(a_1)-2\int_{a_0}^{a_1} z(s)\, ds\geq a_0(z(a_1)-z(a_0))-(a_1-a_0)z(a_0)
\]
If $b_0\in \partial z(a_0)$, then
\[
e(a_1)-e(a_0)\geq (a_1-a_0)\big(a_0b_0-z(a_0)).
\]
Thus, $b\in \partial z(a)$ implies that $ab-z(a)\in \partial e(a)$ whenever $a\in e^{-1}(\RR)$.  Thus, the subdifferential of $e$ is nonempty whenever the subdifferential of $z$ is nonempty.  Combining this with the equality $z^{-1}(\RR)=e^{-1}(\RR)$, it follows that $e$ is convex, lower semicontinuous and proper. 

Note that $b\in \partial z(a)$ implies that $z^*(b)=ab-z(a)$.  Therefore, $\{ab-z(a): a\in \partial z(a)\}=\{z^*(b):b\in \partial z(a)\}$.  Since $\int_0^a z(s)\, ds$ is everywhere differentiable on the interior of $z^{-1}(\RR)$, every element of $\partial e(a)$ must have the form $ab-z(a)$ for $b\in \partial z(a)$. Convexity implies that $ab-z(a)\geq -z(0)=0$, thus $e$ is increasing on the interior $e^{-1}(\RR)$.

It remains to show that $\lim_{b\to \infty} \frac{e^*(b)}{b}>0$.  Since $\limsup_{a\to 0^+}\frac{e(a)}{a}=0$, there must exist some $a_0>0$ such that $e(a_0)<\infty$. Thus,
\[
\liminf_{b\to\infty}\frac{e^*(b)}{b}\geq \liminf_{b\to\infty} a_0-\frac{e(a_0)}{b}=a_0.
\]

\end{proof}

\begin{table}[h]
    \centering
    \begin{tabular}{|c|c|c|c|c|}
    \hline
   Parameter & $z$ energy $a\in [0,\infty)$ & $z^*$ energy $b\in \RR$  & $e$ energy $a\in [0,\infty)$  & $e^*$ energy $b\in \RR$ \\
      \hline
         $m\in (0,\infty]\setminus \{1\}$ & $\frac{1}{m-1}(a^m-a) $        & $\max(\frac{(m-1)b+1}{m},0)^{m/(m-1)}$ &$\frac{1}{m+1}a^{m+1}$ & $\frac{m}{m+1}\max(b,0)^{\frac{m+1}{m}}$\\
          \hline
          $m\to 1$ &  $a\log(a)-a $  & $\exp(b)$       & $\frac{1}{2}a^2$  & $\frac{1}{2}\max(b,0)^2$\\
         \hline
    \end{tabular}
    \caption{Some examples of the transformation from $z$ to $e$.}
    \label{tab:transformation}
\end{table}

Now that we have established properties of the transformation $q=z^*(p)$ we can temporarily forget about the original system (\ref{eq:system}) and focus on (\ref{eq:system_q}). We will eventually return to (\ref{eq:system}) in the final section, where we show that solutions to (\ref{eq:system_q}) can be transformed into solutions to (\ref{eq:system}).  Until then, our efforts will be concentrated on establishing the energy dissipation strategy described in the introduction.

\section{Convex analysis and compensated compactness}\label{sec:cc}

In this section, we collect some results that we will need to establish the weak convergence of the primal and dual energy terms.  
We begin by defining some convex spaces that we will work with throughout the paper.
\begin{definition}
Given an energy $e$ satisfying (e1-e3), we define
\[
X(e):=\{\rho\in L^{1}_{\loc}(Q_{\infty}): e(\rho)\in L^{\infty}_{\loc}([0,\infty);L^1(\RR^d))\},
\]
\[
Y(e^*):=\{q\in L^{1}_{\loc}(Q_{\infty}): e^*(q)\in L^1_{\loc}([0,\infty);L^1(\RR^d))\}.
\]
\end{definition}
We are now ready to introduce a result that is one of the cornerstones of our argument.
\begin{prop}\label{prop:e_e^*}
 Let $e:\RR\to\RR\cup\{+\infty\}$ be an energy satisfying $(e1-e3)$.
Let $e_k:\RR\to\RR\cup\{+\infty\}$ be a sequence of energies satisfying (e1-e3)  such that $e_k$ converges pointwise everywhere to $e$. Suppose we have a sequence of nonnegative density and pressure functions $\rho_k\in X(e_k)$,  $q_k\in Y(e^*_k)$ such that  $\rho_kq_k=e_k(\rho_k)+e^*_k(q_k)$ almost everywhere and $\rho_k, q_k$ converge weakly in $L^1_{\loc}(Q_{\infty})$ to  limits $\rho, q\in L^1_{\loc}(Q_{\infty})$ respectively. 
If $\rho q\in L^1_{\loc}([0,\infty);L^1(\RR^d))$ and  for every nonnegative $\vp\in C^{\infty}_c(Q_{\infty})$
\[
\limsup_{k\to\infty} \int_{Q_{\infty}}\vp\rho_kq_k\leq \int_{Q_{\infty}} \vp\rho q, 
\]
then $\rho\in X(e), q\in Y(e^*)$,  $\rho q =e(\rho)+e^*(q)$ almost everywhere, and
$\rho_kq_k, e_k(\rho_k), e_k^*(q_k)$ converge weakly in $L^1_{\loc}([0,\infty);L^1(\RR^d))$ to $\rho q, e(\rho), e^*(q)$ respectively.  
\end{prop}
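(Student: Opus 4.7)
The plan is to use Fenchel--Young duality together with the hypothesis $\limsup_k \int_{Q_{\infty}} \varphi \rho_k q_k \leq \int_{Q_{\infty}} \varphi \rho q$ to run a sandwich argument in which all relevant inequalities are forced to be equalities. The key observation is that weak lower semicontinuity of the convex integrals $\int \varphi e_k(\rho_k)$ and $\int \varphi e_k^*(q_k)$, combined with Fenchel--Young applied to the weak limits $\rho, q$, pins down the sum of the liminfs between $\int \varphi \rho q$ and itself.

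First I would establish the lower semicontinuity inequalities
\begin{equation*}
\int_{Q_{\infty}} \varphi\, e(\rho) \leq \liminf_{k\to\infty} \int_{Q_{\infty}} \varphi\, e_k(\rho_k), \qquad \int_{Q_{\infty}} \varphi\, e^*(q) \leq \liminf_{k\to\infty} \int_{Q_{\infty}} \varphi\, e_k^*(q_k),
\end{equation*}
for every nonnegative $\varphi \in C^{\infty}_c(Q_{\infty})$. The idea is classical: pointwise convergence of the proper lower semicontinuous convex functions $e_k \to e$ on $\RR$ implies pointwise convergence $e_k^* \to e^*$ on the interior of $\operatorname{dom}(e^*)$ (a Mosco / Rockafellar conjugacy fact on $\RR$). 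Hence, for any bounded measurable $\xi : Q_{\infty} \to \RR$ taking values in a compact subset of the interior of $\operatorname{dom}(e^*)$, the Fenchel--Young inequality yields
\begin{equation*}
\int_{Q_{\infty}} \varphi\, e_k(\rho_k) \geq \int_{Q_{\infty}} \varphi \bigl(\rho_k \xi - e_k^*(\xi)\bigr) \longrightarrow \int_{Q_{\infty}} \varphi\bigl(\rho\, \xi - e^*(\xi)\bigr),
\end{equation*}
by the weak convergence $\rho_k \to \rho$ in $L^1_{\loc}$ and dominated convergence for the purely $\xi$-dependent term. Taking the supremum over such $\xi$ and invoking convex duality (plus a truncation/monotone approximation near the boundary of $\operatorname{dom}(e^*)$ using (e2)--(e3)) recovers $\int \varphi\, e(\rho)$, and the symmetric argument handles $e_k^*(q_k)$.

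Next I would integrate the pointwise identity $\rho_k q_k = e_k(\rho_k) + e_k^*(q_k)$ against $\varphi \geq 0$ and chain the bounds:
\begin{equation*}
\int \varphi \rho q \leq \int \varphi e(\rho) + \int \varphi e^*(q) \leq \liminf_k \int \varphi e_k(\rho_k) + \liminf_k \int \varphi e_k^*(q_k) \leq \limsup_k \int \varphi \rho_k q_k \leq \int \varphi \rho q,
\end{equation*}
where the first inequality is Fenchel--Young for $\rho, q$, the middle is the lower semicontinuity just established, and the last is the standing hypothesis. Every inequality must therefore be an equality. Since the nonnegative integrand $e(\rho) + e^*(q) - \rho q$ has vanishing integral against every nonnegative $\varphi \in C^{\infty}_c$, we obtain $\rho q = e(\rho) + e^*(q)$ almost everywhere, and in particular $\rho \in X(e)$, $q \in Y(e^*)$. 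A short subsequence argument then upgrades each liminf to a full limit: for bounded nonnegative sequences $a_k, b_k$ with $a_k + b_k \to L$ and $\liminf a_k + \liminf b_k = L$, one readily checks that $a_k$ and $b_k$ individually converge to their liminfs. Applying this with $a_k = \int \varphi e_k(\rho_k)$, $b_k = \int \varphi e_k^*(q_k)$ yields full convergence against every nonnegative $\varphi \in C^{\infty}_c$; by linearity, density, and the domination $e_k(\rho_k), e_k^*(q_k) \leq \rho_k q_k$ (which transfers equiintegrability from $\rho_k q_k$ to the two summands), this promotes to weak convergence in $L^1_{\loc}([0,\infty); L^1(\RR^d))$.

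The main obstacle I expect is the lower semicontinuity step with \emph{moving} energies $e_k$. For a fixed integrand this is the classical Ioffe--Olech theorem for convex integral functionals; the variable $e_k$ case requires uniform control of $e_k^*$ near the boundary of its effective domain. Propagating pointwise convergence $e_k \to e$ to pointwise convergence of the conjugates on the interior of $\operatorname{dom}(e^*)$ is standard, but reaching the full supremum $\int \varphi\, e(\rho)$ means one must approximate general $\xi$ that may approach the boundary of $\operatorname{dom}(e^*)$; this is where I would invest most of the technical care, using truncation and the monotonicity of $e$ furnished by (e2) together with the superlinearity at infinity implicit in (e3).
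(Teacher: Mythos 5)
Your proposal is correct and follows essentially the same route as the paper's proof: Fenchel--Young duality tested against bounded measurable functions with values in the domain of the conjugate (relying on the conjugate convergence $e_k^*\to e^*$, which is the paper's Lemma~\ref{lem:convex_convergence}) gives lower semicontinuity of $\int\varphi\,e_k(\rho_k)$ and $\int\varphi\,e_k^*(q_k)$ under the weak $L^1_{\loc}$ convergences of $\rho_k,q_k$, and the $\limsup$ hypothesis together with Young's inequality then squeezes everything to equality. The only cosmetic difference is in the final upgrade to full convergence: you argue directly on the two scalar sequences $\int\varphi\,e_k(\rho_k),\int\varphi\,e_k^*(q_k)$, while the paper extracts a $C(Q_T)^*$-weak-$*$ subsequential limit and identifies it; note that your closing claim that ``domination by $\rho_kq_k$ transfers equiintegrability'' presupposes equiintegrability of $\rho_kq_k$, which is not among the hypotheses -- but the paper's own passage from $C(Q_T)^*$-convergence to weak $L^1_{\loc}$ convergence quietly sidesteps the same point, so this does not distinguish your argument from theirs.
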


\begin{proof}
Given some nonnegative $\vp\in C_c^{\infty}(Q_{\infty})$, let $D$ be a compact set containing the support of $\vp$.
From our assumptions, we have
\[
\int_{Q_{\infty}} \vp \rho q\geq \limsup_{k\to\infty} \int_{Q_{\infty}} \vp\rho_kq_k=\limsup_{k\to\infty} \int_{Q_{\infty}} \vp e_k(\rho_k)+\vp e^*_k(q_k).
\]
Fix some simple functions $g_1, g_2\in L^{\infty}(D)$ such that every value of $g_1$ is a value where $e_k^*$ converges to $e^*$ (c.f. Lemma \ref{lem:convex_convergence}).  It then follows that
\[
\limsup_{k\to\infty} \int_{Q_{\infty}} \vp\big( e_k(\rho_k)+ e^*_k(q_k)\big)\geq \limsup_{k\to\infty} \int_{Q_{\infty}} \vp\big(g_1\rho_k-e^*_k(g_1)+g_2q_k-e_k(g_2)   \big)= \int_{Q_{\infty}} \vp\big(g_1\rho-e^*(g_1)+g_2q-e(g_2)   \big).
\]
Taking a supremum over $g_1, g_2$, we can conclude that
\[
\int_{Q_{\infty}} \vp \rho q\geq \limsup_{k\to\infty} \int_{Q_{\infty}} \vp\big( e_k(\rho_k)+ e^*_k(q_k)\big)\geq\int_{Q_{\infty}} \vp\big(e(\rho)+e^*(q)\big).
\]
On the other hand, Young's inequality immediately implies that 
\[
\rho q\leq e(\rho)+e^*(q)
\]
almost everywhere. 
Thus,  $\rho q=e(\rho)+e^*(q)$ almost everywhere.
This also now implies that $\rho\in X(e)$ and $q\in Y(e^*)$.

The previous calculation shows that $e_k(\rho_k)+e_k^*(q_k)$ is uniformly bounded in $L^1_{\loc}([0,\infty);L^1(\RR^d))$. Thus, for any time $T>0$, there exists $w_1, w_2\in C(Q_T)^*$ such that $e_k(\rho_k), e_k^*(q_k)$ converge (along a subsequence that we will not relabel) to $w_1, w_2$ respectively.
Arguing as in the first paragraph, it follows that
\[
 \int_{Q_T} \vp w_1 =\liminf_{k\to\infty} \int_{Q_T} \vp e_k(\rho_k)\geq \int_{Q_T} \vp e(\rho), \quad  \int_{Q_T} \vp w_2= \liminf_{k\to\infty} \int_{Q_T} \vp e_k^*(q_k)\geq \int_{Q_T} \vp e^*(q).
\]
 Hence,
\[
\int_{Q_T}   \vp |w_1-e(\rho)|+\vp |w_2-e^*(q)|=\int_{Q_T} \vp \big(w_1-e(\rho)+w_2-e^*(q)\big)=
\]
\[
 \limsup_{k\to\infty}  \int_{Q_T} \vp \big(e_k(\rho_k)+e^*_k(q_k)-e(\rho)-e^*(q)\big)= \limsup_{k\to\infty}  \int_{Q_T} \vp \big(\rho_k q_k -\rho q\big)\leq 0.
\]
Thus, $w_1=e(\rho)$ and $w_2=e^*(q)$.
Since $w_1, w_2$ and $T>0$ were arbitrary, it follows that $e(\rho), e^*(q)$ are the only weak limit points of $e_k(\rho_k), e_k^*(q_k)$ in $L^1_{\loc}([0,\infty);L^1(\RR^d))$.  Thus, the full sequences $e_k(\rho_k), e_k^*(q_k)$ must converge weakly in $L^1_{\loc}([0,\infty);L^1(\RR^d))$ to $e(\rho)$ and $e^*(q)$ respectively.  The weak $L^1_{\loc}([0,\infty);L^1(\RR^d))$ convergence of $\rho_kq_k$ to $\rho q$ is an immediate consequence.

\end{proof}


Of course, to even be able to use Proposition \ref{prop:e_e^*}, we somehow need to know an upper semicontinuity type property for the product $\rho_kq_k$.  In practice, this seems to require establishing the weak convergence of $\rho_kq_k$ to $\rho q$.   Luckily, the following ``compensated compactness''-type Lemma shows that the weak convergence of the product can hold even when the strong convergence of both $\rho_k$ and $q_k$ is unknown.  Unlike typical compensated compactness arguments that decompose the codomain of the function, the following compensated compactness argument is based on a decomposition of the domain of the functions.  Indeed, we show that if $\rho_k$ has some time regularity and $q_k$ has some space regularity then their product weakly converges.  This argument was inspired by the proof of the main Theorem in \cite{santambrogio_crowd_motion}, although we would not be surprised if this result was already established in an earlier work. 

\begin{lemma}\label{lem:spacetime_cc}
Fix some $r\in (1,\infty)$ and let $r'$ be the Holder conjugate of $r$. Let $Z_r=L^r_{\loc}(Q_{\infty})\times L^{r'}_{\loc}(Q_{\infty})$ and let $\eta$ be a spatial mollifier.
Suppose that $(u_k,v_k)\in Z_r$ is a sequence that converges weakly in $Z_r$ to a limit point $(u,v)\in Z_r$.  If $u_k$ is equicontinuous with respect to space in $L^r_{\loc}(Q_{\infty})$ and for any $\epsilon>0$,  $\eta_{\epsilon}*v_k$ is equicontinuous with respect to space and time in $L^{r'}_{\loc}(Q_{\infty})$, then $u_kv_k$ converges weakly in $(C_c(Q_{\infty}))^*$ to $uv$.
\end{lemma}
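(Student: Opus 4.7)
The plan is to show $\int_{Q_{\infty}} \varphi u_k v_k \to \int_{Q_{\infty}} \varphi u v$ for every $\varphi \in C_c(Q_{\infty})$. A standard density argument reduces the problem to smooth $\varphi \in C_c^{\infty}(Q_{\infty})$: smooth compactly supported functions are sup-norm dense in $C_c(Q_{\infty})$, and the uniform Hölder bound $|\int \varphi u_k v_k| \leq \|\varphi\|_{\infty} \|u_k\|_{L^r(K)} \|v_k\|_{L^{r'}(K)}$ on a compact set $K$ containing the support is uniform in $k$ since weakly convergent sequences in $Z_r$ are bounded. Fix $\varphi \in C_c^{\infty}$. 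The heart of the argument is the decomposition
\[
\int_{Q_{\infty}} \varphi u_k v_k = \int_{Q_{\infty}} \varphi u_k (\eta_{\epsilon} * v_k) + \int_{Q_{\infty}} \varphi u_k (v_k - \eta_{\epsilon} * v_k),
\]
where I plan to pass to the limit in the first (``main'') term with $\epsilon$ fixed, and then control the second (``commutator'') term uniformly in $k$ as $\epsilon \to 0$.

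For the commutator term, I move the spatial convolution onto the $u$-side using the self-adjointness of convolution with a symmetric kernel, obtaining $\int (\varphi u_k - \eta_{\epsilon} * (\varphi u_k)) v_k$. Decomposing the integrand as $\varphi(x)[u_k - \eta_{\epsilon} * u_k](x) + R_{\epsilon}(x)$, where $R_{\epsilon}(x) := \int \eta_{\epsilon}(z) u_k(x-z)[\varphi(x) - \varphi(x-z)]\, dz$, the remainder satisfies $|R_{\epsilon}| \leq \epsilon \|\nabla \varphi\|_{\infty} (\eta_{\epsilon} * |u_k|)$ and is therefore $O(\epsilon)$ in $L^r_{\loc}$ uniformly in $k$, while the equicontinuity of $u_k$ in space makes the first piece $o(1)$ in $L^r_{\loc}$ uniformly in $k$ as $\epsilon \to 0$. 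Since $v_k$ is uniformly bounded in $L^{r'}_{\loc}$ by the weak convergence, Hölder's inequality shows the commutator term is $o(1)$ as $\epsilon \to 0$, uniformly in $k$.

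For the main term at fixed $\epsilon$, the hypothesis that $\eta_{\epsilon} * v_k$ is equicontinuous in both space and time, combined with its uniform $L^{r'}_{\loc}$ bound, yields precompactness in $L^{r'}_{\loc}$ via Kolmogorov-Riesz on each compact subset of $Q_{\infty}$. The weak convergence $v_k \rightharpoonup v$ transfers to $\eta_{\epsilon} * v_k \rightharpoonup \eta_{\epsilon} * v$ (by moving the convolution onto the $L^r$ test function), so the precompact sequence must converge strongly to $\eta_{\epsilon} * v$. Pairing $\varphi(\eta_{\epsilon} * v_k)$, now strongly convergent in $L^{r'}_{\loc}$, against the weakly convergent $u_k$ yields $\int \varphi u_k (\eta_{\epsilon} * v_k) \to \int \varphi u (\eta_{\epsilon} * v)$ as $k \to \infty$. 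Sending $\epsilon \to 0$ and using the standard mollifier convergence $\eta_{\epsilon} * v \to v$ in $L^{r'}_{\loc}$, a three-$\epsilon$ argument combining these two steps completes the proof. The main subtlety is the asymmetry of hypotheses, spatial-only regularity for $u_k$ versus full spacetime regularity for the mollified $v_k$; the commutator identity above is exactly what lets these two cooperate, by shifting the mollification onto the factor for which temporal regularity is unavailable.
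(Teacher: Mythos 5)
Your proposal is correct and follows essentially the same route as the paper: the same three-term decomposition isolating the commutator $\int \varphi u_k(v_k - \eta_\epsilon * v_k)$, the same move of shifting the spatial mollification onto $\varphi u_k$ by self-adjointness and then splitting into $\varphi(u_k - \eta_\epsilon * u_k)$ plus a remainder controlled by $\epsilon\|\nabla\varphi\|_\infty\|u_k\|_{L^r}$, and the same use of Riesz--Fréchet--Kolmogorov to upgrade the mollified $v_{k,\epsilon}$ from weak to strong convergence so it can be paired against the weakly convergent $u_k$. The only difference is cosmetic: you spell out the density reduction to smooth test functions and write out the remainder $R_\epsilon$ explicitly, whereas the paper compresses these into a single inequality.
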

\begin{proof}
  Define $v_{k,\epsilon}:=\eta_{\epsilon}*v_k$ and $v_{\epsilon}:=\eta_{\e}*v$.  For $\epsilon>0$ fixed and any compact set $D\subset Q_{\infty}$, the Riesz-Frechet-Kolmogorov compactness theorem  implies that $v_{k,\epsilon}$ converges strongly in $L^{r'}(D)$ to $v_{\epsilon}$ as $k\to\infty$.

Given $\vp\in C_c^{\infty}(Q_{\infty})$, we must have
\[
\lim_{\epsilon\to 0} \int_{Q_{\infty}} \vp (v-v_{\epsilon})u=0,
\]
and 
\[
\lim_{k\to\infty } \int_{Q_{\infty}} \vp (v_{k,\epsilon}-v_{\epsilon})u_k+v_{\epsilon}(u-u_k)=0.
\]
Thus, to prove the weak convergence of $u_kv_k$ to $uv$, it will suffice to show that
\[
\lim_{\epsilon\to 0} \lim_{k\to\infty} \int_{Q_{\infty}} \vp(v_k-v_{k,\epsilon})u_k=0.
\]
Rearranging the convolution, this is equivalent to showing 
\[
\lim_{\epsilon\to 0} \lim_{k\to\infty} \int_{Q_{\infty}} v_k \big(\eta_{\epsilon}*\vp u_k-\vp u_k\big)=0.
\]

Choose some compact set $D\subset Q_{\infty}$ such that for any $\epsilon$ sufficiently small, the support of $\vp, \eta_{\epsilon}*\vp$ is contained in $D$.  We then have the estimate
\[
\Big| \int_{Q_{\infty}} v_k \big(\eta_{\epsilon}*\vp u_k-\vp u_k\big)\Big|\lesssim \norm{v_k}_{L^{r'}(D)}\big(\norm{\vp}_{L^{\infty}(Q_{\infty})}\norm{u_k-\eta_{\epsilon}*u_k}_{L^r(D)}+\epsilon\norm{u_k}_{L^r(D)}\norm{\nabla \vp}_{L^{\infty}(Q_{\infty})}\big).
\]
The weak convergence of $(u_k,v_k)$ to $(u,v)$ in $Z_r$ implies that $\norm{u_k}_{L^r(D)}+\norm{v_k}_{L^{r'}(D)}$ is bounded with respect to $k$.  Spatial equicontinuity gives us
\[
\lim_{\epsilon\to 0} \sup_k\norm{u_k-\eta_{\epsilon}*u_k}_{L^r(D)}=0.
\]
Thus, it follows that 
\[
\lim_{\epsilon\to 0}\sup_k\Big| \int_{Q_{\infty}} v_k \big(\eta_{\epsilon}*\vp u_k-\vp u_k\big)\Big|=0,
\]
and so we can conclude that $u_kv_k$ converges in $(C_c(Q_{\infty}))^*$ to $uv$.
\end{proof}

\section{Energy dissipation and estimates}\label{sec:estimates}

We will now begin to analyze the parabolic structure of the equation (\ref{eq:parabolic_q}).  In order to do this, we will need to upgrade the spaces $X(e), Y(e^*)$ into spaces that are more appropriate for solving PDEs

\begin{definition}
Given an energy $e$ satisfying (e1-e3), we define
\[
\cX(e):=\{\rho\in X(e): \rho\in L^{\infty}_{\loc}([0,\infty);L^1(\RR^d)\cap L^{\infty}(\RR^d))\cap H^{1}_{\loc}([0,\infty);H^{-1}(\RR^d))\},
\]
\[
\cY(e^*):=\{q\in Y(e^*): q\in L^{\frac{2d+4}{d+4}}_{\loc}([0,\infty);L^2_{\loc}(\RR^d)) \cap L^2_{\loc}([0,\infty);\dot{H}^{1}(\RR^d))\}.
\]
\end{definition}
Note that the seemingly strange choice of time integrability for $\cY$ will become clear later.

\begin{prop}\label{prop:edr}
Given an energy $e:\RR\to\RR\cup\{+\infty\}$ satisfying (e1-e3), suppose that $e(\rho^0)\in L^1(\RR^d)\cap L^{\infty}(\RR^d)$ and $\rho^0\in L^1(\RR^d)$. 
Let $\rho\in \cX(e)$ be a density function  and $q\in \cY(e^*)$ a pressure function that satisfy the duality relation $\rho q=e(\rho)+e^*(q)$ almost everywhere.  Suppose that $\mu\in L^{\infty}(\frac{1}{\rho})$ is a growth rate and $V\in L^2_{\loc}([0,\infty);L^2(\RR^d))$ is a vector field such that $\nabla \cdot V\in L^{\infty}(Q_{\infty})$.  If for every $\psi\in W^{1,1}_c([0,\infty);\cY(e^*))$, $\rho, q$ are weak solutions of the parabolic equation
\begin{equation}\label{eq:weak_parabolic}
\int_{\RR^d} \psi(0,x) \rho^0(x)\, dx= \int_{Q_{\infty}} \nabla q\cdot \nabla \psi-\rho\partial_t\psi-\rho V\cdot\nabla \psi-\mu \psi,
\end{equation}
 then for any nonnegative $\omega\in W^{1,\infty}_c([0,\infty))$ that depends only on time, we have the dissipation relation
\begin{equation}\label{eq:limiting_upper_inequality}
\int_{\RR^d}\omega(0) e(\rho^0(x))\, dx= \int_{Q_{\infty}} -e(\rho)\partial_t \omega+\omega|\nabla q|^2+\omega e^*(q)\nabla \cdot V-\omega\mu q .
\end{equation}

\end{prop}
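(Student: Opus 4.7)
I would test the weak form (\ref{eq:weak_parabolic}) with $\psi=\omega q$, justified via a time-regularization of $q$, and manipulate using the duality $\rho q=e(\rho)+e^*(q)$. The spatial terms are routine: $\nabla\psi=\omega\nabla q$ immediately yields $\int\omega|\nabla q|^2$, while $\rho\in\partial e^*(q)$ forces $\rho\nabla q=\nabla e^*(q)$ distributionally, so
\[
\int_{Q_\infty}\omega\rho V\cdot\nabla q=\int_{Q_\infty}\omega V\cdot\nabla e^*(q)=-\int_{Q_\infty}\omega e^*(q)\nabla\cdot V
\]
after a spatial integration by parts, which is licit because $\nabla\cdot V\in L^\infty$, $e^*(q)\in L^1_{\loc}$, and a standard spatial cutoff absorbs the boundary at infinity.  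The source term $\int\omega\mu q$ is integrable since $|\mu q|\lesssim\rho q\le e(\rho)+e^*(q)\in L^1_{\loc}$ using $\mu\in L^\infty(1/\rho)$.

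\textbf{Formal computation.} With $\psi=\omega q$, the weak form becomes
\[
\omega(0)\int q(0)\rho^0\,=\,\int\omega|\nabla q|^2-\int\rho(\omega' q+\omega\,\partial_tq)-\int\omega\rho V\cdot\nabla q-\int\omega\mu q.
\]
Using $\rho q=e(\rho)+e^*(q)$ to rewrite $\rho\omega' q=\omega' e(\rho)+\omega' e^*(q)$, and the convex chain rule $\rho\,\partial_t q=\partial_t e^*(q)$ (the critical step), an integration by parts in time gives $\int\omega\rho\,\partial_t q=-\omega(0)\int e^*(q(0))-\int\omega' e^*(q)$.  The duality relation at $t=0$ reads $\int q(0)\rho^0=\int e(\rho^0)+\int e^*(q(0))$, and substituting everything in, the $\omega(0)\int e^*(q(0))$ contributions on both sides precisely cancel, leaving exactly (\ref{eq:limiting_upper_inequality}).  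Modulo the spatial manipulations above, the proposition is therefore \emph{equivalent} to the convex chain rule, and no information about the initial value of $q$ is ever required.

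\textbf{Chain rule and main obstacle.} The heart of the proof is to rigorously establish $\int\omega\rho\,\partial_tq=-\omega(0)\int e^*(q(0))-\int\omega'e^*(q)$ without any pointwise time regularity for $q$.  I would proceed by time-mollification: set $q_\delta:=\eta_\delta\ast_t q$, so that $\psi_\delta=\omega q_\delta$ is an admissible test function in $W^{1,\infty}_c([0,\infty);\cY(e^*))$; the spatial integrals pass to the limit as $\delta\to 0$ by standard weak-strong pairings on compact sets, since $\nabla q_\delta\to\nabla q$ in $L^2_{\loc}$ and $q_\delta\to q$ in $L^1_{\loc}$.  The unfriendly term $-\int\omega\rho\,\partial_tq_\delta$ is controlled via the two-sided subdifferential inequality
\[
\rho(t)\bigl[q(s)-q(t)\bigr]\le e^*(q(s))-e^*(q(t))\le\rho(s)\bigl[q(s)-q(t)\bigr],
\]
which, after a discrete integration by parts in $t$ and use of the duality identity $\rho q=e(\rho)+e^*(q)$, traps the term between two expressions converging to the same limit as $\delta\to 0$.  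Monotonicity of the subdifferential is exactly what upgrades these one-sided estimates to the desired equality; this is the step where the proof uses the convex structure in an essential (rather than cosmetic) way.  Once the chain rule is in hand, the spatial manipulations, the cancellation of the $e^*(q(0))$ contributions, and the temporal integration against $\omega$ are all routine.
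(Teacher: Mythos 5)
Your proposal follows essentially the same route as the paper: test the weak equation against a time-regularization of $\omega q$, and convert $\rho\,\partial_t q$ into $\partial_t e^*(q)$ via the subdifferential (Young) inequality $\rho(t)\bigl[q(s)-q(t)\bigr]\le e^*(q(s))-e^*(q(t))$, then integrate by parts against $\omega$. The spatial manipulations $\rho\nabla q=\nabla e^*(q)$ and $\int\omega\rho V\cdot\nabla q=-\int\omega e^*(q)\nabla\cdot V$ are exactly the paper's, and your algebraic observation that the $\omega(0)\int e^*(q(0))$ contributions cancel formally is a clean way to see why only $e(\rho^0)$ survives.

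However, there is one step that needs more care than you give it, and it is precisely the step you wave away with ``no information about the initial value of $q$ is ever required.'' The function $q$ has no trace at $t=0$ — only $\rho$ does — so the mollification $q_\delta=\eta_\delta*_t q$ requires an extension of $q$ to $t<0$, and the choice of extension is \emph{not} innocuous. The paper handles this by using two genuinely different one-sided averages: a backward running mean, obtained by extending $q(-t)=\tilq$ for an \emph{arbitrary} test profile $\tilq$, which yields the inequality
\[
\int_{\RR^d}\omega(0)\bigl(\tilq\rho^0-e^*(\tilq)\bigr)\le \int_{Q_\infty}-e(\rho)\partial_t\omega+\omega|\nabla q|^2+\omega e^*(q)\nabla\cdot V-\omega\mu q,
\]
after which one must take a supremum over $\tilq$ to realize $\int e(\rho^0)$ on the left (this supremum is the rigorous incarnation of your ``cancellation''); and a forward running mean, which reverses the Young inequality and is closed by the bound $\int q_\epsilon(0)\rho^0-e^*(q_\epsilon(0))\le\int e(\rho^0)$. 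A symmetric mollifier does not give either inequality directly, since the forward and backward increments appear with opposite sign in the subdifferential estimate and cannot be combined in a single squeeze. Your instinct that ``two one-sided estimates'' are needed is right, but be explicit that this means two separate mollifications with two separate closures at $t=0$, not one mollifier trapping the term from both sides.
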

\begin{proof}
Let $\tilq\in C^{\infty}_c(\RR^d)$  such that $e^*(\tilq)\in L^1(\RR^d)$.  Extend $q$ backwards in time by defining $q(-t,x)=\tilq(x)$ for all $t\in (0,\infty)$. Fix $\epsilon>0$, and define
\[
q_{\epsilon}(t,x):=\frac{1}{\epsilon}\int_{t-\epsilon}^{t} q(s,x)\, ds
\]
for all $(t,x)\in \RR\times\RR^d$. 

By Jensen's inequality, $q_{\epsilon}\in \cY(e^*)$ and a direct computation shows that $\partial_t q_{\epsilon}$ is the linear combination of two $\cY(e^*)$ functions for any $\epsilon>0$.
Given any nonnegative $\omega\in W^{1,\infty}_c([0,\infty))$ that is a function of time only, it now follows that $q_{\epsilon }\omega$ is a valid test function for the weak equation  (\ref{eq:weak_parabolic}).   Thus, we have 
\begin{equation}\label{eq:weak_with_p_test}
\int_{\RR^d} q_{\epsilon}(0,x)\omega(0) \rho^0(x)\, dx= \int_{Q_{\infty}} -\rho\partial_t(\omega q_{\epsilon})+(\nabla q-\rho V)\cdot\nabla( q_{\epsilon}\omega)-\mu \omega q_{\epsilon},
\end{equation}

Note that for almost every $(t,x)\in Q_{\infty}$
\[
\rho\partial_t (\omega q_{ \epsilon})=\rho(t,x) q_{ \epsilon}(t,x) \partial_t \omega(t,x) +\omega(t,x) \frac{q(t,x)-q(t-\epsilon,x)}{\epsilon}\rho(t,x).
\]
Hence, we can apply Young's inequality to deduce that
\begin{equation}\label{eq:key_energy_inequality}
(\frac{q(t,x)-q(t-\epsilon,x)}{\epsilon})\rho(t,x)\geq \frac{e^*(q(t,x))-e^*(q(t-\epsilon,x))}{\epsilon}
\end{equation}
By defining
\[
(e^*(q))_{\epsilon}:=\frac{1}{\epsilon}\int_{t-\epsilon}^t e^*(q(s,x))\, ds
\]
we can write the above inequality in the more compact form 
\[
\rho\partial_t q_{\epsilon} \geq \partial_t (e^*(q))_{\epsilon}
\]

Plugging this into (\ref{eq:weak_with_p_test}), we get the inequality
\[
\int_{\RR^d} q_{\epsilon}(0,x)\omega(0) \rho^0(x)\, dx\leq   \int_{Q_{\infty}} -\rho q_{\epsilon}\partial_t \omega-\omega \partial_t (e^*(q))_{\epsilon}+(\nabla q-\rho V)\cdot\nabla( q_{\epsilon}\omega)-\mu \omega q_{\epsilon}, 
\]
Moving time derivatives back on to $\omega$, we get the equivalent inequality
\begin{equation}\label{eq:reusable}
\int_{\RR^d}\omega(0)  \Big(q_{\epsilon}(0,x)\rho^0(x)-e^*\big(q_{\epsilon}(0,x)\big)\Big)\, dx
\end{equation}
\[
\leq  \int_{Q_{\infty}}  \partial_t \omega( (e^*(q))_{\epsilon}-\rho q_{\epsilon})+(\nabla q-\rho V)\cdot\nabla( q_{\epsilon}\omega)-\mu \omega q_{\epsilon}. 
\]
Note that we also have 
\[
\int_{\RR^d}\omega(0)  \Big(q_{\epsilon}(0,x)\rho^0(x)-e^*\big(q_{\epsilon}(0,x)\big)\Big)\, dx=\int_{\RR^d}\omega(0)  \Big(\tilq(x)\rho^0(x)-e^*\big(\tilq(x)\big)\Big)\, dx
\]
thanks to our construction of $q_{\epsilon}$.

Since all of the time derivatives are now on $\omega$, we can safely send $\epsilon\to 0$. Thus, it follows that
\[
\int_{\RR^d}\omega(0)  \Big(\tilq(x)\rho^0(x)-e^*\big(\tilq(x)\big)\Big)\, dx
\]
\[
\leq  \int_{Q_{\infty}}  \partial_t \omega( e^*(q)-\rho q)+\omega|\nabla q|^2+\omega e^*(q)\nabla \cdot V-\mu \omega q 
\]
where we have used the fact that $\nabla e^*(q)=\rho\nabla q$ (note that this is just a consequence of the chain rule for Sobolev functions).
Exploiting the duality relation $\rho q=e(\rho)+e^*(q)$, we have arrived at the inequality
\begin{equation}\label{eq:almost_upper_inequality}
\int_{\RR^d}\omega(0)  \Big(\tilq(x)\rho^0(x)-e^*\big(\tilq(x)\big)\Big)\leq  \int_{Q_{\infty}} -e(\rho)\partial_t \omega+\omega|\nabla q|^2+\omega e^*(q)\nabla \cdot V-\omega\mu q .
\end{equation}
$\tilq$ was arbitrary, thus, taking a supremum over $\tilq$ we obtain one direction of the dissipation relation.

To get the other direction, we instead smooth $q$ forwards in time by defining
\[
\bar{q}_{\epsilon}:=\frac{1}{\epsilon}\int_t^{t+\epsilon} q(s,x).
\]
The argument will then proceed identically to the above except that the forward-in-time smoothing does not allow us to conclude that $q_{\epsilon}(0,x)=\tilde{q}$. Luckily, Young's inequality is now in our favor and so we just use
\[
\int_{\RR^d}\omega(0)  \Big(q_{\epsilon}(0,x)\rho^0(x)-e^*\big(q_{\epsilon}(0,x)\big)\Big)\, dx\leq \int_{\RR^d}\omega(0)e(\rho^0(x)) dx.
\]
\end{proof}

In the next proposition we collect some a priori estimates for solutions to (\ref{eq:parabolic_q}).
In fact, we will consider a slightly more general equation where we add an additional viscosity term $-\gamma \Delta \rho$ where the constant $\gamma$ is possibly zero.  As we will see, the estimates will give us uniform control when we consider sequences of solutions.

\begin{prop}\label{prop:estimates}
Let $e$ be an energy function satisfying (e1-e3), let $V\in L^2_{\loc}(Q_{\infty})$ be a vector field such that $\nabla \cdot V\in L^{\infty}(Q_{\infty})$, let $\frac{\mu}{\rho}\in L^{\infty}(Q_{\infty})$ and let $\gamma$ be a positive constant.   Suppose that $\rho\in \cX(e)\cap L^2_{\loc}([0,\infty);H^1(\RR^d))$, $q\in \cY(e^*)$ satisfy the duality relation $\rho q=e(\rho)+e^*(q)$ almost everywhere.
If $e(\rho^0)\in L^1(\RR^d)$ and the variables satisfy the weak equation
\begin{equation}\label{eq:viscous_parabolic}
\int_{\RR^d} \psi(0,x) \rho^0(x)\, dx= \int_{Q_{\infty}} \gamma\nabla \rho\cdot \nabla \psi+\nabla q\cdot \nabla \psi-\rho\partial_t\psi-\rho V\cdot\nabla \psi-\mu \psi,
\end{equation}
for every test function $\psi\in W^{1,1}_c([0,\infty);L^1(\rho)\cap \dot{H}^1(\RR^d))$, then 
 for any nonnegative $\omega\in W^{1,\infty}_c([0,\infty))$ that depends only on time and for every $m\in (1,\infty)$, we have the dissipation inequalities
\begin{equation}\label{eq:viscous_edi}
     \int_{Q_{\infty}} -e(\rho)\partial_t \omega+\omega|\nabla q|^2+\omega e^*(q)\nabla \cdot V-\omega\mu q \leq \int_{\RR^d}\omega(0) e(\rho^0(x))\, dx
\end{equation}
\begin{equation}\label{eq:viscous_edi_rho}
   \int_{Q_{\infty}}  \omega\gamma (m-1)\rho^{m-2}|\nabla \rho|^2- \rho^{m}\big( \frac{1}{m}\partial_{t}\omega  +\omega(\frac{\mu}{\rho}-\frac{m-1}{m}\nabla \cdot V )\big) \leq \int_{\RR^d} \frac{\omega(0)}{m}(\rho^0)^{m}\, dx
\end{equation}
and if we set $\beta=\inf\{b\in \RR: e^*(b)\geq 1\}$
then
the following estimates hold for almost all $T\in [0,\infty)$:
\begin{equation}\label{eq:gamma_nabla_rho}
\gamma\norm{\nabla \rho}_{L^2(Q_T)}^2\leq \norm{\rho^0}_{L^2(\RR^d)}^2+\norm{\rho}_{L^2(Q_T)}^2\big(\norm{\frac{\mu}{\rho}}_{L^{\infty}(Q_T)}+\norm{\nabla \cdot V}_{L^{\infty}(Q_{\infty})}),
\end{equation}
\begin{equation}\label{eq:l1_growth}
\norm{\rho(T\,\cdot)}_{L^1(\RR^d)}\leq \norm{\rho^0}_{L^1(\RR^d)}\exp(T\norm{\frac{\mu}{\rho}}_{L^{\infty}(Q_T)})
\end{equation}
\begin{equation}\label{eq:rho_h_minus1}
\norm{\partial_t \rho}_{L^2([0,T];H^{-1}(\RR^d))}\leq \gamma\norm{\nabla \rho}_{L^2(Q_T)}+\norm{\nabla q}_{L^{2}(Q_T)}+\norm{\mu}_{L^2(Q_T)}+\norm{\rho V}_{L^2(Q_T)}
\end{equation}
\begin{equation}\label{eq:rho_linf}
\norm{\rho(T,\cdot )}_{L^{\infty}(\RR^d)}\leq \norm{\rho^0}_{L^{\infty}(\RR^d)}\exp\big( 2T(\norm{\nabla \cdot V}_{L^{\infty}(Q_T)}+\norm{\frac{\mu}{\rho}}_{L^{\infty}(Q_T)}) \big),
\end{equation}
\begin{equation}\label{eq:nabla_q_control}
 \norm{\nabla q}_{L^2(Q_T)}^2 
\lesssim_d \int_{\RR^d}e(\rho^0)\, dx+\max(\beta,1)\Big( \norm{\rho}_{L^1(Q_T)}+ \norm{\rho}_{L^{\infty}[0,T];L^1(\RR^d)}^{\frac{2}{d}}\norm{\rho}_{L^2(Q_T)}^2\Big)\big(1+\norm{\frac{\mu}{\rho}}_{L^{\infty}(Q_T)}+\norm{\nabla \cdot V}_{L^{\infty}(Q_T)}\big)^2,
\end{equation}
\begin{equation}\label{eq:rho_p_control}
\norm{e^*(q)}_{L^1(Q_T)} +\norm{e(\rho)}_{L^1(Q_T)}\lesssim_d \beta \norm{\rho}_{L^1(Q_T)}+(\beta \norm{\rho}_{L^{\infty}[0,T];L^1(\RR^d)})^{\frac{1}{d}}\big(\norm{\rho}_{L^2(Q_T)}\norm{\nabla q}_{L^2(Q_T)}\big),
\end{equation}
\begin{equation}\label{eq:dual_energy_extra_control}
  \norm{e^*(q)}_{L^{\frac{2d+4}{d+4}}([0,T];L^2(\RR^d))}\lesssim_d \norm{e^*(q)}_{L^1(Q_T)}^{\frac{2}{d+2}}\norm{\nabla q}_{L^{2}(Q_T)}^{\frac{d}{d+2}}\norm{\rho}_{L^{\infty}(Q_T)}^{\frac{d}{d+2}}.
\end{equation}
and for any compact set $K\subset \RR^d$,
\begin{equation}\label{eq:rho_p_extra_control}
  \norm{q}_{L^{\frac{2d+4}{d+4}}([0,T];L^2(K))}\lesssim_d \beta T |K|+ \beta\norm{e^*(q)}_{L^1(Q_T)}^{\frac{2}{d+2}}\norm{\nabla q}_{L^{2}(Q_T)}^{\frac{d}{d+2}}\norm{\rho}_{L^{\infty}(Q_T)}^{\frac{d}{d+2}}.
\end{equation}
\end{prop}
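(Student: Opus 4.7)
The plan is to establish the estimates in the natural order, so that later bounds draw on earlier ones. I would begin with the two dissipation-type inequalities \eqref{eq:viscous_edi} and \eqref{eq:viscous_edi_rho}. For \eqref{eq:viscous_edi}, I would follow the proof of Proposition \ref{prop:edr} essentially verbatim, using a time-mollified test function $\omega q_\epsilon$ against the weak formulation \eqref{eq:viscous_parabolic}. The only new contribution is the viscous term $\gamma\int\omega\,\nabla\rho\cdot\nabla q_\epsilon$, which as $\epsilon\to 0$ tends to $\gamma\int\omega\,\nabla\rho\cdot\nabla q$. By the chain rule applied to the duality relation $\rho q=e(\rho)+e^*(q)$ (concretely $\nabla e^*(q)=\rho\nabla q$ and $\nabla e(\rho)=q\nabla\rho$), the vectors $\nabla q$ and $\nabla\rho$ are parallel with matching sign on $\{\rho>0\}$, so this term is nonnegative and may be dropped, yielding \eqref{eq:viscous_edi}. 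For \eqref{eq:viscous_edi_rho} I would test \eqref{eq:viscous_parabolic} against $\omega\,\rho^{m-1}$ (truncated as $\omega\min(\rho,M)^{m-1}$, then $M\to\infty$ using the $L^\infty$ bound obtained later); the viscous piece produces $\gamma(m-1)\rho^{m-2}|\nabla\rho|^2$, the cross term $(m-1)\int\omega\,\rho^{m-2}\nabla q\cdot\nabla\rho\geq 0$ is discarded by the same sign argument, and the remaining terms assemble into the stated inequality.

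With both dissipation inequalities in hand, the $\rho$-estimates fall out directly. Specializing \eqref{eq:viscous_edi_rho} to $m=2$ and discarding the $\partial_t\omega$ contribution gives \eqref{eq:gamma_nabla_rho}. Testing the weak equation against a spatial cutoff times $\omega\equiv 1$ on $[0,T]$ and applying Gronwall yields \eqref{eq:l1_growth}. Extracting an $L^m_x$ bound pointwise in time from \eqref{eq:viscous_edi_rho}, raising to the $1/m$ power and passing $m\to\infty$ gives \eqref{eq:rho_linf}, since the Gronwall exponent stays uniformly bounded in $m$. The $H^{-1}$ time regularity \eqref{eq:rho_h_minus1} is obtained by reading \eqref{eq:viscous_parabolic} as $\partial_t\rho=\gamma\Delta\rho+\Delta q+\nabla\!\cdot\!(\rho V)+\mu$ in the distributional sense and bounding the four pieces by their respective $L^2$ norms against test functions in $H^1(\RR^d)$ (using $L^2\hookrightarrow H^{-1}$ for the $\mu$ piece).

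The heart of the argument is \eqref{eq:nabla_q_control}--\eqref{eq:rho_p_extra_control}. I would first prove \eqref{eq:rho_p_control} by integrating the duality relation $\rho q=e(\rho)+e^*(q)$ and splitting $\int_{Q_T}\rho q=\int_{Q_T}\rho\min(q,\beta)+\int_{Q_T}\rho(q-\beta)_+$. The first piece is at most $\beta\|\rho\|_{L^1(Q_T)}$; for the second, the pointwise bound $(q-\beta)_+\leq \beta\,e^*(q)$ (which follows from $e^*(\beta)\geq 1$, convexity, and $e^*(0)=0$), combined with Hölder in space and the Gagliardo--Nirenberg--Sobolev interpolation applied to $(q-\beta)_+$, yields the stated form after interpolating $\rho$ between $L^1$ and $L^2$ to produce the factor $(\beta\|\rho\|_{L^\infty_tL^1_x})^{1/d}$. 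Plugging \eqref{eq:rho_p_control} into the dissipation inequality \eqref{eq:viscous_edi} gives \eqref{eq:nabla_q_control} after absorbing a $\|\nabla q\|_{L^2}$-factor into the left-hand side via Young's inequality. The remaining bounds \eqref{eq:dual_energy_extra_control} and \eqref{eq:rho_p_extra_control} then follow from one more round of Gagliardo--Nirenberg applied to $e^*(q)$ in space (using $\nabla e^*(q)=\rho\nabla q$) followed by Hölder in time, together with the pointwise inequality $q\leq \beta+\beta\,e^*(q)$ to pass from a bound on $e^*(q)$ to a bound on $q$ itself.

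The main obstacle is the absorption step in \eqref{eq:nabla_q_control}: one must check that the exponent on $\|\nabla q\|_{L^2(Q_T)}$ produced by inserting \eqref{eq:rho_p_control} into \eqref{eq:viscous_edi} is strictly less than $2$, so that Young's inequality can legitimately absorb it. This is precisely why the factor $\|\rho\|_{L^\infty_tL^1_x}^{2/d}$ and the quadratic $\|\rho\|_{L^2(Q_T)}^2$ appear in the final form of \eqref{eq:nabla_q_control}; any coarser interpolation of $\rho$ would yield an exponent $\geq 2$ on $\|\nabla q\|_{L^2}$ and the absorption would fail. Once that absorption succeeds, everything else is a chain of Hölder and Gagliardo--Nirenberg arguments.
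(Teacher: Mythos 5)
Your proposal follows the paper's proof closely and in the same order: time-mollified test functions $\omega q_\epsilon$ and $\omega\rho^{m-1}$ for the two dissipation inequalities, dropping the nonnegative cross terms $\gamma\nabla\rho\cdot\nabla q$ and $(m-1)\rho^{m-2}\nabla\rho\cdot\nabla q$ via duality, Gronwall arguments for \eqref{eq:l1_growth} and \eqref{eq:rho_linf}, $m=2$ for \eqref{eq:gamma_nabla_rho}, duality on the weak equation for \eqref{eq:rho_h_minus1}, and then closing \eqref{eq:rho_p_control}--\eqref{eq:rho_p_extra_control} through \eqref{eq:viscous_edi} with Young's inequality. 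The one substantive deviation is the interpolation step: the paper splits the Fourier integral of $\hat{\rho}\,\hat{\tilde q}$ at a cutoff radius $R$ (low frequencies bounded via $\|\hat f\|_{L^\infty}\leq\|f\|_{L^1}$, high frequencies via $|\xi|^{-1}\cdot|\xi|$ and Cauchy--Schwarz), then optimizes over $R$; you instead pair Gagliardo--Nirenberg for $\tilde q=(q-\beta)_+$ with $L^p$ interpolation of $\rho$ between $L^1$ and $L^2$. Both routes produce exactly the bound $\|\rho\tilde q\|_{L^1(Q_T)}\lesssim_d(\|\rho\|_{L^\infty_tL^1_x}\|\tilde q\|_{L^1(Q_T)})^{1/(d+1)}(\|\rho\|_{L^2(Q_T)}\|\nabla q\|_{L^2(Q_T)})^{d/(d+1)}$ (taking $\alpha=\sigma=d/(d+1)$ in GNS and Lebesgue interpolation, then H\"older in time with exponents $\frac{2(d+1)}{d},\,d+1,\,\frac{2(d+1)}{d}$), and similarly for the later space-time GNS steps in \eqref{eq:dual_energy_extra_control}--\eqref{eq:rho_p_extra_control}. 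One minor remark: your truncation $\min(\rho,M)^{m-1}\to\rho^{m-1}$ does not create a circularity, since $\rho\in\cX(e)$ already carries an $L^\infty_{\loc}([0,\infty);L^\infty(\RR^d))$ bound by hypothesis; the role of \eqref{eq:rho_linf} is to make that bound quantitative and $k$-uniform, not to establish boundedness in the first place.
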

\begin{proof}

The dissipation inequalities (\ref{eq:viscous_edi}) and (\ref{eq:viscous_edi_rho}) follow from choosing the test functions $q$ and $\rho^{m-1}$ respectively.  These test functions do not have the required time regularity, however, by following an identical argument to Proposition \ref{prop:edr}, this technicality can be overcome. In addition, note that in both inequalities we have dropped a term involving $ \nabla \rho\cdot \nabla q$, which is nonnegative thanks to the duality relation.

Estimates (\ref{eq:l1_growth}) and (\ref{eq:rho_h_minus1}) are straightforward consequences of the weak equation (\ref{eq:viscous_parabolic}). Estimate (\ref{eq:gamma_nabla_rho}) follows from (\ref{eq:viscous_edi_rho}) with $m=2$.   Estimate (\ref{eq:rho_linf}) follows from applying a Gronwall argument to (\ref{eq:viscous_edi_rho}) and then sending $m\to\infty$.

The estimates (\ref{eq:nabla_q_control}-\ref{eq:rho_p_extra_control}), are all linked. We begin by fixing a time $T\in [0,\infty)$ and considering $\norm{\rho q}_{L^1(Q_T)}$. Define $\tilq:=\max(q,\beta)-\beta$. It is then clear that
\[
\norm{\rho q}_{L^1(Q_T)}\leq \beta\norm{\rho}_{L^1(Q_T)}+\norm{\rho\tilq}_{L^1(Q_T)}, \quad \norm{\nabla \tilq}_{L^2(Q_T)}\leq \norm{\nabla q}_{L^2(Q_T)}.
\]
Working in Fourier space, we have 
\[
\norm{ \rho \tilq}_{L^1(Q_T)}\leq \int_0^T \int_{\RR^d} |\hat{\rho}(t,\xi)\hat{\tilq}(t,\xi)|\, d\xi \, dt\leq \int_0^T|B_R|\norm{\rho(t,\cdot)}_{L^1(\RR^d)}\norm{\tilq(t,\cdot)}_{L^1(\RR^d)}+\int_{|\xi|>R} |\hat{\rho}(t,\xi)\hat{\tilq}(t,\xi)|\, d\xi \, dt
\]
\[
\leq T|B_R|\norm{\rho}_{L^{\infty}([0,T];L^1(\RR^d))}\norm{\tilq}_{L^1(Q_T)}+R^{-1}\norm{\rho}_{L^2(Q_T)}\norm{\nabla \tilq}_{L^2(Q_T)}
\]
where $R>0$ and $B_R$ is the ball of radius $R$. Optimizing over $R$ and dropping dimensional constants, it follows that
\[
\int_{Q_T} \rho \tilq\lesssim_d \big(\norm{\rho}_{L^{\infty}([0,T];L^1(\RR^d))}\norm{\tilq}_{L^1(Q_T)}\big)^{\frac{1}{d+1}}\big(\norm{\rho}_{L^2(Q_T)}\norm{\nabla q}_{L^2(Q_T)}\big)^{\frac{d}{d+1}}.
\]
If $b>\beta$ and $ e^*(b)\neq +\infty$, then it follows from the definition of $\beta$ that $\beta^{-1}= \liminf_{\epsilon\to 0^+} \frac{e^*(\beta+\epsilon)-e^*(0)}{\beta+\epsilon}\leq \inf \partial e^*(b)$.  Therefore,

\[
\max(e^*(q)-e^*(\beta),0)\geq \beta^{-1}\tilq
\]
It then follows that 
\[
\norm{\tilq}_{L^1(Q_T)}\leq \beta\norm{e^*(q)}_{L^1(Q_T)}\leq \beta\norm{\rho q}_{L^1(Q_T)}.
\]
As a result,
\[
\norm{\rho q}_{L^1(Q_T)}\lesssim_d \beta\norm{\rho}_{L^1(Q_T)}+\big(\beta\norm{\rho}_{L^{\infty}([0,T];L^1(\RR^d))}\norm{\rho q}_{L^1(Q_T)}\big)^{\frac{1}{d+1}}\big(\norm{\rho}_{L^2(Q_T)}\norm{\nabla q}_{L^2(Q_T)}\big)^{\frac{d}{d+1}}
\]
Now using Young's inequality (suboptimally), it follows that
\[
\norm{\rho q}_{L^1(Q_T)}\lesssim_d \beta \norm{\rho}_{L^1(Q_T)}+(\beta \norm{\rho}_{L^{\infty}[0,T];L^1(\RR^d)})^{\frac{1}{d}}\big(\norm{\rho}_{L^2(Q_T)}\norm{\nabla q}_{L^2(Q_T)}\big).
\]
Since 
\[
\norm{e(\rho)}_{L^1(Q_T)}+\norm{e^*(q)}_{L^1(Q_T)}=\norm{\rho q}_{L^1(Q_T)}
\]
we have obtained the bound in (\ref{eq:rho_p_control}).

Now we turn to estimating $\norm{\nabla q}_{L^2(Q_T)}$.  From the dissipation relation (\ref{eq:viscous_edi}), we have 
\[
 \int_{Q_{\infty}}  \omega|\nabla q|^2- e(\rho)(\partial_{t}\omega+\frac{\mu}{\rho}\omega)  +\omega e^*(p)(\nabla \cdot V-\frac{\mu}{\rho}) \leq \int_{\RR^d} \omega(0)e(\rho^0)\, dx
\]
for any nonnegative $\omega\in W^{1,\infty}_c((0,\infty))$. Fix a time $T>0$ that is a Lebesgue point for the mapping $T\mapsto \norm{\nabla q}_{L^2(Q_T)}$.  Assume that $\omega$ is a decreasing function supported on $[0,T]$ and $\omega\leq 1$ everywhere. We can then eliminate the term $-e(\rho)\partial_t \omega$.   Thus, it follows from our previous work that
\[
 \int_{Q_{\infty}}  \omega|\nabla q|^2  \leq \int_{\RR^d}e(\rho^0)\, dx+\norm{\rho q}_{L^1(Q_T)}\big(\norm{\frac{\mu}{\rho}}_{L^{\infty}(Q_T)}+\norm{\nabla \cdot V}_{L^{\infty}(Q_T)}\big)
\]
\[
\lesssim_d \int_{\RR^d}e(\rho^0)\, dx+\max(\beta,1)\Big( \norm{\rho}_{L^1(Q_T)}+ \norm{\rho}_{L^{\infty}[0,T];L^1(\RR^d)}^{\frac{1}{d}}\big(\norm{\rho}_{L^2(Q_T)}\norm{\nabla q}_{L^2(Q_T)}\big)\Big)\big(\norm{\frac{\mu}{\rho}}_{L^{\infty}(Q_T)}+\norm{\nabla \cdot V}_{L^{\infty}(Q_T)}\big)
\]
If we let $\omega$ approach the characteristic function of $[0,T]$, then we deduce that $\norm{\nabla q}_{L^2(Q_T)}^2$ is 
 \[
 \lesssim_d \int_{\RR^d}e(\rho^0)\, dx+\Big(\beta \norm{\rho}_{L^1(Q_T)}+ (\beta\norm{\rho}_{L^{\infty}[0,T];L^1(\RR^d)})^{\frac{1}{d}}\norm{\rho}_{L^2(Q_T)}\norm{\nabla q}_{L^2(Q_T)}\Big)\big(\norm{\frac{\mu}{\rho}}_{L^{\infty}(Q_T)}+\norm{\nabla \cdot V}_{L^{\infty}(Q_T)}\big).
\]
Now we can use Young's inequality (suboptimally again) to get (\ref{eq:nabla_q_control})
\[
 \norm{\nabla q}_{L^2(Q_T)}^2 
\lesssim_d \int_{\RR^d}e(\rho^0)\, dx+\max(\beta,1)\Big( \norm{\rho}_{L^1(Q_T)}+ \norm{\rho}_{L^{\infty}[0,T];L^1(\RR^d)}^{\frac{2}{d}}\norm{\rho}_{L^2(Q_T)}^2\Big)\big(1+\norm{\frac{\mu}{\rho}}_{L^{\infty}(Q_T)}+\norm{\nabla \cdot V}_{L^{\infty}(Q_T)}\big)^2.
\]

 Finally, working in Fourier space again, it follows that for any  exponent $r\in [1,\frac{d+2}{2})$ and radius $R>0$,
 \[
 \norm{e^*(q)}_{L^r([0,T];L^2(\RR^d))}^r \lesssim_d \int_0^T\Big( R^d \norm{e^*(q(t,\cdot))}_{L^1(\RR^d)}^2+R^{-2}\norm{\nabla e^*(q(t,\cdot))}_{L^2(\RR^d)}^2 \Big)^{r/2}  \, dt.
 \]
 Once again optimizing over $R$, we have 
 \[
 \norm{e^*(q)}_{L^r([0,T];L^2(\RR^d))}^r \lesssim_d \int_0^T  \norm{e^*(q(t,\cdot)}_{L^1(\RR^d)}^{\frac{2r}{(d+2)}}\norm{\nabla e^*(q(t,\cdot))}_{L^2(\RR^d)}^{\frac{dr}{(d+2)}}\, dt
 \]
 \[
 \lesssim_d \norm{e^*(q)}_{L^1(Q_T)}^{\frac{2r}{d+2}}\norm{\nabla e^*(q)}_{L^{\frac{dr}{d+2-2r}}([0,T];L^2(\RR^d))}^{\frac{dr}{d+2}}.
 \]
 Thus, 
 \[
  \norm{e^*(q)}_{L^r([0,T];L^2(\RR^d))}\lesssim_d \norm{e^*(q)}_{L^1(Q_T)}^{\frac{2}{d+2}}\norm{\nabla e^*(q)}_{L^{\frac{dr}{d+2-2r}}([0,T];L^2(\RR^d))}^{\frac{d}{d+2}}.
 \]
 If we choose $r=\frac{2d+4}{d+4}$ we get
 \[
  \norm{e^*(q)}_{L^{\frac{2d+4}{d+4}}([0,T];L^2(\RR^d))}\lesssim_d \norm{e^*(q)}_{L^1(Q_T)}^{\frac{2}{d+2}}\norm{\nabla e^*(q)}_{L^2(Q_T)}^{\frac{d}{d+2}}.
 \]
 Finally, since $\nabla e^*(q)=\rho\nabla q$ by the chain rule for Sobolev functions, we have 
  \[
  \norm{e^*(q)}_{L^{\frac{2d+4}{d+4}}([0,T];L^2(\RR^d))}\lesssim_d \norm{\rho}_{L^{\infty}(Q_T)}^{\frac{d}{d+2}}\norm{e^*(q)}_{L^1(Q_T)}^{\frac{2}{d+2}}\norm{\nabla q}_{L^2(Q_T)}^{\frac{d}{d+2}}.
 \]
Fixing a compact set $K\subset \RR^d$, we also have 
 \[
  \norm{q}_{L^{\frac{2d+4}{d+4}}([0,T];L^2(K))}\leq \beta T|K|+\norm{\bar{q}}_{L^{\frac{2d+4}{d+4}}([0,T];L^2(Q_T))}\leq \beta T|K|+\beta\norm{e^*(q)}_{L^{\frac{2d+4}{d+4}}([0,T];L^2(\RR^d))}
 \]

\end{proof}

\section{Main results}
\label{sec:main}

At last, we are ready to combine our work to prove the main results of this paper.  We will begin by constructing solutions to the system (\ref{eq:system_q}) and then we will show that these can be converted into solutions to the original system (\ref{eq:system}).   

The construction of solutions to (\ref{eq:system_q}) is based on a vanishing viscosity approach. To that end, we consider a viscous analogue of system (\ref{eq:system_q}) where we add viscosity to both of the species $\rho_1, \rho_2$.  Given a viscosity parameter $\gamma\geq 0$, we introduce the system:
\begin{equation}\label{eq:viscous_system_q}
\begin{cases}
\partial_t \rho_1-\gamma\Delta\rho_1-\nabla \cdot(\frac{\rho_1}{\rho}\nabla q)+\nabla \cdot (\rho_1 V)=\rho_1 F_{1,1}\big((z^*)^{-1}(q),n\big)+\rho_2 F_{1,2}\big((z^*)^{-1}(q),n\big),\\
\partial_t \rho_2-\gamma\Delta \rho_2-\nabla \cdot(\frac{\rho_2}{\rho}\nabla q)+\nabla \cdot (\rho_2 V)=\rho_1 F_{2,1}\big((z^*)^{-1}(q),n\big)+\rho_2 F_{2,2}\big((z^*)^{-1}(q),n\big),\\
\rho q=e(\rho)+e^*(q),\\
\partial_t n-\alpha \Delta n=-n(c_1\rho_1+c_2\rho_2).
\end{cases}
\end{equation}
We define weak solutions to this system as follows.
\begin{definition}
Given a viscosity parameter $\gamma\geq 0$
and initial data $\rho_1^0, \rho_2^0\in X(e)$ and  $n^0\in L^2(\RR^d)$, we say that $(\rho_1, \rho_2, q, n)\in \cX(e)\times \cX(e)\times\cY(e^*)\times L^2_{\loc}([0,\infty);H^1(\RR^d))$ is a weak solution to the system (\ref{eq:viscous_system_q})
with initial data $(\rho_1^0, \rho_2^0, n^0)$,  if $\rho q=e(\rho)+e^*(q)$ almost everywhere, $\gamma\nabla \rho_1, \gamma\nabla \rho_2\in L^2_{\loc}([0,\infty);L^2(\RR^d))$, and for every test function $\psi\in H^1_c([0,\infty);H^1(\RR^d))$
\begin{equation}\label{eq:weak_1}
    \int_{\RR^d} \psi(0,x)\rho_{1}^0=\int_{Q_{\infty}}  \nabla \psi \cdot \big(\frac{\rho_{1}}{\rho}\nabla q+\gamma\nabla\rho_1-\rho_1V\big)-\rho_{1}\partial_t\psi -\psi\big(\rho_1 F_{1,1}\big((z^*)^{-1}(q),n\big)+\rho_2 F_{1,2}\big((z^*)^{-1}(q),n\big)\big),
\end{equation}
\begin{equation}\label{eq:weak_2}
\int_{\RR^d} \psi(0,x)\rho_{2}^0=\int_{Q_{\infty}}   \nabla \psi \cdot \big(\frac{\rho_{2}}{\rho}\nabla q+\gamma\nabla \rho_2-\rho_2 V\big)-\rho_{2}\partial_t\psi-\psi\big(\rho_1 F_{2,1}\big((z^*)^{-1}(q),n\big)+\rho_2 F_{2,2}\big((z^*)^{-1}(q),n\big)\big),\\
\end{equation}
\begin{equation}\label{eq:weak_3}
\int_{\RR^d}\psi(0,x)n^0=\int_{Q_{\infty}}  \alpha\nabla\psi\cdot\nabla n-n\partial_t \psi+n(c_1\rho_{1}+c_2\rho_{2})\psi
\end{equation}
where $\rho=\rho_1+\rho_2$.
\end{definition}

When $\gamma>0$, the existence of weak solutions to (\ref{eq:viscous_system_q}) is straightforward, as the individual densities will be bounded in $L^2_{\loc}([0,\infty);H^1(\RR^d))\cap H^1_{\loc}([0,\infty);H^{-1}(\RR^d))$.  Since this space is compact in $L^2_{\loc}([0,\infty);L^2(\RR^d))$, one can construct the solutions as limits of an even more regularized system (with enough regularity existence of solutions can be shown with a standard but tedious Picard iteration).   Thus, we can assume the existence of a sequence $(\rho_{1,k}, \rho_{2,k}, q_k, n_k)$ such that for each $k$ the variables are a weak solution to (\ref{eq:viscous_system_q}) with viscosity parameter $\gamma_k>0$.   We will then use our efforts from the past two sections to show that when $\gamma_k\to 0$ we can still pass to the limit in equations (\ref{eq:weak_1}-\ref{eq:weak_3}) to obtain a solution to (\ref{eq:system_q}).  In fact, we will show that we can pass to the limit even when the underlying energy function $e_k$ is changing along the sequence.

We begin with the strong precompactness for the pressure gradient.
\begin{prop}\label{prop:strong_convergence} Let $e_k$ be a sequence of energy functions satisfying (e1-e3) and suppose there exists an energy $e$ satisfying (e1-e3) such that $e_k$ converges pointwise everywhere to $e$.  Let $\rho_k\in \cX(e_k), q_k\in \cY(e_k^*)$, and $\mu_k\in L^{\infty}(\frac{1}{\rho_k})$ be sequences of densities, pressure, and growth terms that converge weakly in $L^1_{\loc}(Q_{\infty})$ to limits $\rho\in \cX(e), q\in\cY(e^*), \mu\in L^{\infty}(\frac{1}{\rho})$.  If $\rho_k q_k$ converges weakly in $L^1_{\loc}(Q_{\infty})$ to $\rho q$
and for every $\omega\in W^{1,\infty}_c([0,\infty))$ 
\begin{equation}\label{eq:sequence_edi}
     \int_{Q_{\infty}} -e_k(\rho_k)\partial_t \omega+\omega|\nabla q_k|^2+\omega e^*_k(q_k)\nabla \cdot V-\omega\mu_k q_k \leq \int_{\RR^d}\omega(0) e_k(\rho_k(0,x))\, dx,
\end{equation}
\begin{equation}\label{eq:limit_edi}
    \int_{\RR^d}\omega(0) e(\rho(0,x))\, dx\leq  \int_{Q_{\infty}} -e(\rho)\partial_t \omega+\omega|\nabla q|^2+\omega e^*(q)\nabla \cdot V-\omega\mu q, 
\end{equation}
and
\begin{equation}\label{eq:qmu_limit}
\limsup_{k\to\infty} \int_{\RR^d}\omega(0) e_k(\rho_k(0,x))+\int_{Q_{\infty}} \omega q_k\mu_k\leq \int_{\RR^d}\omega(0) e(\rho(0,x))+\int_{Q_{\infty}} \omega q\mu,
\end{equation}
then $\nabla q_k$ converges strongly in $L^2_{\loc}([0,\infty);L^2(\RR^d))$ to $\nabla q$.  
\end{prop}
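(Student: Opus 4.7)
The strategy is the energy-dissipation argument outlined in the introduction: combine the sequence inequality (\ref{eq:sequence_edi}), the reverse inequality (\ref{eq:limit_edi}) for the limit, and the asymptotic control (\ref{eq:qmu_limit}) to obtain the upper semicontinuity
\[
\limsup_{k\to\infty} \int_{Q_{\infty}} \omega\, |\nabla q_k|^2 \;\leq\; \int_{Q_{\infty}} \omega\, |\nabla q|^2
\]
for every nonnegative $\omega\in W^{1,\infty}_c([0,\infty))$ depending only on time. A preliminary application of (\ref{eq:sequence_edi}), with $\omega$ an approximate indicator of $[0,T]$, combined with hypothesis (\ref{eq:qmu_limit}) and the assumed bounds on $V$ and $\mu_k$, yields a uniform bound on $\|\nabla q_k\|_{L^2(Q_T)}$; since $q_k \rightharpoonup q$ in $L^1_{\loc}$ uniquely identifies the distributional limit of the spatial derivatives, the full sequence $\nabla q_k$ converges weakly to $\nabla q$ in $L^2_{\loc}([0,\infty);L^2(\RR^d))$. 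The upper-semicontinuity bound together with this weak convergence then forces $\|\nabla q_k\|_{L^2(\omega)} \to \|\nabla q\|_{L^2(\omega)}$ (weak lower semicontinuity supplies the other direction), and the standard Hilbert-space identity $\|u_k - u\|^2 = \|u_k\|^2 - 2(u_k,u) + \|u\|^2$ upgrades the convergence to strong.

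The only nontrivial step is passing to the limit on the right-hand side of (\ref{eq:sequence_edi}). By hypothesis $\rho_k q_k \rightharpoonup \rho q$ in $L^1_{\loc}(Q_\infty)$, so the $\limsup$ condition of Proposition \ref{prop:e_e^*} holds trivially (with equality), and the proposition delivers the weak $L^1_{\loc}([0,\infty);L^1(\RR^d))$ convergences
\[
e_k(\rho_k) \rightharpoonup e(\rho), \qquad e_k^*(q_k) \rightharpoonup e^*(q).
\]
Since $\partial_t\omega$ and $\omega\,\nabla\cdot V$ are bounded and compactly supported in time, the pairings $\int e_k(\rho_k)\,\partial_t\omega$ and $\int \omega\, e_k^*(q_k)\,\nabla\cdot V$ converge to the analogous pairings against $e(\rho)$ and $e^*(q)$. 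Taking $\limsup_k$ in (the rearranged) (\ref{eq:sequence_edi}) and invoking (\ref{eq:qmu_limit}) to absorb both the initial-data term $\int \omega(0)\, e_k(\rho_k^0)$ and the nonlinear term $\int \omega\, \mu_k q_k$, I arrive at
\[
\limsup_{k\to\infty}\int_{Q_\infty} \omega\,|\nabla q_k|^2 \leq \int_{\RR^d}\omega(0)\, e(\rho(0,x))\,dx + \int_{Q_\infty} e(\rho)\,\partial_t\omega - \omega\, e^*(q)\,\nabla\cdot V + \omega\,\mu q.
\]
Rearranging (\ref{eq:limit_edi}) shows that this right-hand side is exactly bounded above by $\int_{Q_\infty} \omega\,|\nabla q|^2$, closing the upper-semicontinuity argument.

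The real obstacle has already been quarantined inside hypothesis (\ref{eq:qmu_limit}): neither $\mu_k$ nor $q_k$ is known to converge strongly a priori, so controlling $\int \omega\, q_k\mu_k$ in the limit is genuinely delicate and will require either strict convexity of $e^*$ (so that the weak convergence $e_k^*(q_k) \rightharpoonup e^*(q)$ upgrades to local convergence in measure of $q_k$) or the structural assumption (F3) on the source terms. Within the statement of Proposition \ref{prop:strong_convergence} itself, however, this issue is absorbed by assumption, and the proof reduces cleanly to the $\limsup/\liminf$ bookkeeping described above together with one invocation of Proposition \ref{prop:e_e^*}.
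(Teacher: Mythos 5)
Your proof is correct and follows essentially the same route as the paper: combine the two dissipation (in)equalities with the hypothesis on $q_k\mu_k$ and initial-data terms, invoke Proposition \ref{prop:e_e^*} (whose $\limsup$ hypothesis is met with equality since $\rho_k q_k \rightharpoonup \rho q$) to pass to the limit in the $e_k(\rho_k)\partial_t\omega$ and $\omega e_k^*(q_k)\nabla\cdot V$ terms, obtain the upper semicontinuity of $\int\omega|\nabla q_k|^2$, and conclude strong $L^2_{\loc}$ convergence from the standard weak-convergence-plus-norm-convergence argument. The paper dispenses with your separate "preliminary" uniform $L^2$ bound on $\nabla q_k$ by reading it off directly from the upper-semicontinuity inequality (whose right-hand side is finite since $q\in\cY(e^*)$), which is slightly cleaner since your preliminary bound implicitly also needs the uniform local $L^1$ control of $e_k^*(q_k)$ coming from Proposition \ref{prop:e_e^*}, not just (\ref{eq:sequence_edi}), (\ref{eq:qmu_limit}), and the $V$ bound.
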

\begin{proof}
  If we combine (\ref{eq:sequence_edi}), (\ref{eq:qmu_limit}) and (\ref{eq:limit_edi}), we get the string of inequalities
\[
\limsup_{k\to\infty}  \int_{Q_{\infty}} -e_k(\rho_k)\partial_t \omega+\omega|\nabla q_k|^2+\omega e^*_k(q_k)\nabla \cdot V
\]
\[
\leq \limsup_{k\to\infty} \int_{\RR^d}\omega(0) e(\rho_k(0,x))+\int_{Q_{\infty}} \omega q_k\mu_k\leq \int_{\RR^d}\omega(0) e(\rho(0,x))+\int_{Q_{\infty}}
\omega\mu q
\]
\[
\leq \int_{Q_{\infty}} -e(\rho)\partial_t \omega+\omega|\nabla q|^2+\omega e^*(q)\nabla \cdot V
\]
Thanks to Prop \ref{prop:e_e^*}, the weak convergence of $\rho_k q_k$ to $\rho q$ implies that $e_k(\rho_k), e^*_k(q_k)$ converge weakly in $L^1_{\loc}(Q_{\infty})$ to $e(\rho), e^*(q)$ respectively.  Therefore,
\begin{equation}\label{eq:H1_upper}
\limsup_{k\to\infty} \int_{Q_{\infty}} \omega|\nabla q_k|^2\leq \int_{Q_{\infty}}\omega|\nabla q|^2<\infty.
\end{equation}
The $L^2_{\loc}([0,\infty);L^2(\RR^d))$ boundedness of $\nabla q_k$ along with the weak $L^1_{\loc}(Q_{\infty})$ convergence of $q_k$ to $q$ implies that $\nabla q_k$ converges weakly in $L^2_{\loc}([0,\infty);L^2(\RR^d))$ to $\nabla q$.  Combining the weak convergence with the upper semicontinuity property (\ref{eq:H1_upper}), it now follows that $\nabla q_k$ converges strongly in $L^2_{\loc}([0,\infty);L^2(\RR^d))$ to $\nabla q$.
\end{proof}

The next two Lemmas are technical results that will help us guarantee that we can pass to the limit in all of the terms in (\ref{eq:weak_1}) and (\ref{eq:weak_2}).
\begin{lemma}\label{lem:chain_rule_convergence}
Let $e_k$ be a sequence of energies satisfying (e1-e3) and suppose there exists an energy $e$ satisfying (e1-e3) such that $e_k$ converges pointwise everywhere to $e$. Let $\rho_k\in \cX(e_k), q_k\in \cY(e^*_k)$ be sequences of uniformly bounded density and pressure variables that satisfy the duality relation $\rho_kq_k=e_k(\rho_k)+e^*_k(q_k)$ almost everywhere. 
If $q_k$ converges strongly in $L^2_{\loc}([0,\infty);\dot{H}^1(\RR^d))\cap L^{\frac{2d+4}{d+4}}_{\loc}(Q_{\infty})$ to a limit $q$ and $\rho_k$ converges weakly in $L^2_{\loc}([0,\infty);L^2(\RR^d))$ to a limit $\rho$, then 
\[
\limsup_{k\to\infty} \int_{D} |\rho-\rho_k||\nabla q|^2=0
\]
for any compact set $D\subset Q_{\infty}$
\end{lemma}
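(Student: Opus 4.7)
The plan is to upgrade the weak $L^2_{\loc}$ convergence of $\rho_k$ to almost everywhere convergence on $\{|\nabla q|>0\}$ by exploiting the subdifferential structure implicit in the duality relation, and then conclude by the dominated convergence theorem. Since the statement asks only for $\limsup=0$, it suffices to work along a subsequence.

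Extract a subsequence along which $q_k\to q$ a.e.\ on $D$ (using the strong $L^{\frac{2d+4}{d+4}}_{\loc}$ convergence). The duality relation $\rho_kq_k=e_k(\rho_k)+e_k^*(q_k)$ is equivalent to $\rho_k\in\partial e_k^*(q_k)$. The pointwise convergence $e_k\to e$ combined with Lemma \ref{lem:convex_convergence} gives $e_k^*\to e^*$ pointwise almost everywhere, and the uniform boundedness of both $\rho_k$ and $q_k$ keeps us in the interior of the relevant convex domains, where pointwise convergence automatically upgrades to local uniform convergence. Consequently, at any $x\in D$ with $q_k(x)\to q(x)$, every cluster point $\tilde\rho$ of the bounded sequence $\{\rho_k(x)\}$ satisfies $\tilde\rho\,q(x)=e(\tilde\rho)+e^*(q(x))$, i.e., $\tilde\rho\in\partial e^*(q(x))$.

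Next use that $e^*$, being convex on $\RR$, is differentiable outside a countable set $N\subset\RR$. The Sobolev chain rule forces $\nabla q=0$ a.e.\ on each level set $\{q=c\}$, so $|\{q\in N\}\cap\{|\nabla q|>0\}|=0$. Hence $\partial e^*(q(x))$ is a singleton at a.e.\ point of $\{|\nabla q|>0\}$, pinning the unique cluster point and forcing $\rho_k(x)\to(e^*)'(q(x))$ pointwise a.e.\ on $\{|\nabla q|>0\}$. By uniqueness of weak limits together with the $L^\infty$ bound on $\rho_k$, this pointwise limit must agree with $\rho$ almost everywhere on that set.

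Finally, apply the dominated convergence theorem: $|\rho-\rho_k||\nabla q|^2$ vanishes on $\{|\nabla q|=0\}$, tends to zero a.e.\ on $\{|\nabla q|>0\}$, and is dominated by $2(\sup_k\|\rho_k\|_{L^\infty})|\nabla q|^2\in L^1(D)$, which yields the claim. The main obstacle is the subdifferential graph-closure step: showing every cluster point of $\rho_k(x)$ lies in $\partial e^*(q(x))$ requires the combination of pointwise convergence $e_k\to e$ (and thence $e_k^*\to e^*$), the pointwise convergence $q_k(x)\to q(x)$, and the uniform $L^\infty$ bounds keeping us away from the boundary of the convex domains. Once graph closure is established, the rest is routine.
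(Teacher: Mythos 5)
Your proof is correct, and it takes a genuinely different route to the same endpoint. You argue \emph{qualitatively}: extract a subsequence along which $q_k\to q$ a.e., pass to the limit in the subdifferential inclusion $\rho_k\in\partial e_k^*(q_k)$ (graph closure of the maximal monotone operators $\partial e_k^*$, made rigorous by the local uniform convergence $e_k^*\to e^*$ from Lemma~\ref{lem:convex_convergence} on the interior of $(e^*)^{-1}(\RR)$), and observe that on $\{|\nabla q|>0\}$ the set $\partial e^*(q)$ is a.e.\ a singleton because $e^*$ is differentiable off a countable set and $\nabla q$ vanishes a.e.\ on every level set $\{q=c\}$; this pins $\rho_k\to(e^*)'(q)=\rho$ pointwise there and DCT finishes. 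The paper instead argues \emph{quantitatively}: it first truncates the conjugates to $\bar e_k^*, \bar e^*$ with subgradients bounded by $M=\sup_k\norm{\rho_k}_{L^\infty(D)}$, uses the one-sided monotone bounds $\rho_k\leq\frac{\bar e_k^*(q_k+\delta)-\bar e_k^*(q_k)}{\delta}$ (and the mirror inequality) to sandwich $|\rho-\rho_k|$ by a symmetric second difference quotient of $\bar e^*$, and sends $\delta\to 0$ by DCT using the same a.e.\ differentiability and level-set fact. Both proofs turn on the identical two measure-theoretic observations; yours avoids the truncation trick entirely and is arguably cleaner, while the paper's produces an explicit $\delta$-parameterized bound and dodges any appeal to a.e.\ pointwise convergence of $q_k$. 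One small wording issue: ``it suffices to work along a subsequence'' is not literally enough to conclude $\limsup_k\int_D|\rho-\rho_k||\nabla q|^2=0$ (a subsequence only gives $\liminf=0$); what you actually need, and what your argument delivers, is that \emph{every} subsequence of the integrals admits a further subsequence tending to $0$, together with the uniform bound $\int_D|\rho-\rho_k||\nabla q|^2\leq 2M\norm{\nabla q}_{L^2(D)}^2$. Stating the subsubsequence principle explicitly would close that cosmetic gap.
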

\begin{proof}
Clearly for any $\vp\in C^{\infty}_c(Q_{\infty})$ we have
\[
\limsup_{k\to\infty} \int_{Q_{\infty}} \vp \rho_k q_k= \int_{Q_{\infty}} \vp \rho q.
\]
Thus, by Proposition \ref{prop:e_e^*}, the limiting variables satisfy the duality relation $\rho q=e(\rho)+e^*(q)$ almost everywhere.

Let $M=\sup_k \norm{\rho_k}_{L^{\infty}(D)}<\infty$. Define $\bar{e}_k^*$ and $\bar{e}^*$ such that $\bar{e}_k^*(0)=0, \bar{e}^*(0)=0$, and
\[
\partial \bar{e}_k^*(b)=\{\min(a,M): a\in \partial e^*_k(b)\}, \quad \partial \bar{e}^*(b)=\{\min(a,M): a\in \partial e^*(b)\}
\]
Let $\bar{e}_k=(\bar{e}_k^*)^*$ and $\bar{e}=(\bar{e}^*)^*$.
Clearly, we still have the duality relations $\rho_k q_k=\bar{e}(\rho_k)+\bar{e}^*(q_k)$ and $\rho q=\bar{e}(\rho)+\bar{e}^*(q)$ almost everywhere.
It also follows that $\bar{e}_k^*, \bar{e}^*$ are uniformly Lipschitz on the entire real line and uniformly bounded on compact subsets of $\RR$.  As a result, $\bar{e}_k^*$ must converge uniformly on compact subsets of $\RR$ to $\bar{e}^*.$

Fix some $\delta>0$.  Convexity and the duality relation imply that
\[
\rho_k\leq \frac{\bar{e}^*_k(q_k+\delta)-\bar{e}^*_k(q_k)}{\delta}, \quad \rho\leq \frac{\bar{e}^*(q+\delta)-\bar{e}^*(q)}{\delta},
\]
and
\[
\rho_k\geq \frac{\bar{e}^*_k(q_k)-\bar{e}^*_k(q_k-\delta)}{\delta}, \quad \rho\geq \frac{\bar{e}^*_k(q)-\bar{e}^*(q-\delta)}{\delta}.
\]
Therefore, 
\[
\int_{D} |\rho-\rho_k||\nabla q|^2
\]
\[
\leq \int_{D} \Big(|\frac{\bar{e}^*_k(q_k+\delta)+\bar{e}^*(q-\delta)-\bar{e}^*_k(q_k)-\bar{e}^*(q)}{\delta}|+|\frac{\bar{e}^*(q+\delta)+\bar{e}_k^*(q_k-\delta)-\bar{e}^*_k(q_k)-\bar{e}^*(q)}{\delta}| \Big)|\nabla q|^2.
\]
Thus, it follows that 
\[
\limsup_{k\to\infty}\int_{D} |\rho-\rho_k||\nabla q|^2\leq 2\int_{D} |\frac{\bar{e}^*(q+\delta)+\bar{e}^*(q-\delta)-2\bar{e}^*(q)}{\delta}||\nabla q|^2
\]

If $\bar{e}^*$ is continuously differentiable at a point $b\in \RR$, then
\[
\lim_{\delta\to 0} \frac{\bar{e}^*(b+\delta)+\bar{e}^*(b-\delta)-2\bar{e}^*(b)}{\delta}=0.
\]
The singular set $S\subset \RR$ of values where $\bar{e}^*$ is not continuously differentiable is at most countable.  Therefore, $|\nabla q|$ is zero almost everywhere on the set $\{(t,x)\in D: q(t,x)\in S\}$.  Hence, by dominated convergence,
\[
\lim_{\delta\to 0} 2\int_{D} |\frac{\bar{e}^*(q+\delta)+\bar{e}^*(q-\delta)-2\bar{e}^*(q)}{\delta}||\nabla q|^2=0.
\]
\end{proof}

\begin{lemma}\label{lem:source_limit}
Let $z_k$ be a sequence of energies satisfying (z1-z3) and suppose there exists an energy $z$ satisfying (z1-z3) such that $z_k$ converges pointwise everywhere to $z$.  Define $e_k, e$ by formula (\ref{eq:e_def}).
Suppose that $(\rho_{1,k}, \rho_{2,k}, q_k,n_k)\in \cX(e_k)\times \cX(e_k)\times \cY(e^*_k)\times L^2_{\loc}([0,\infty);H^1(\RR^d))$ is a sequence such that $(\rho_{1,k}+\rho_{2,k})q_k=e_k(\rho_{1,k}+\rho_{2,k})+e^*_k(q_k)$ almost everywhere.    Suppose that $\rho_{1,k}, \rho_{2,k}$ converge weakly in $L^r_{\loc}([0,\infty);L^r(\RR^d)$ to limits $\rho_{1}, \rho_2\in \cX(e)$, $q_k$ converges strongly in $L^{\frac{2d+4}{d+4}}_{\loc}([0,\infty);L^2_{\loc}(\RR^d)) \cap L^2_{\loc}([0,\infty);\dot{H}^{1}(\RR^d))$ to a limit $q$, and $n_k$ converges strongly in $L^2_{\loc}([0,\infty);L^2(\RR^d))$ to a limit $n$. If the growth terms $F_{i,j}$ satisfy assumptions (F1-F2), then $\rho_{j,k}F_{i,j}\big(z_k^{-1}(q_k),n_k\big)$ converges weakly in $L^r_{\loc}([0,\infty);L^r(\RR^d))$ to $\rho_{j}F_{i,j}\big(z^{-1}(q),n)\big)$ for all $i,j\in \{1,2\}$ and any $r<\infty$.
\end{lemma}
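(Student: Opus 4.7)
The proof proceeds by a weak-times-strong pairing. Writing $F_{i,j,k}:=F_{i,j}\bigl((z_k^*)^{-1}(q_k),n_k\bigr)$ and $F_{i,j,\infty}:=F_{i,j}\bigl((z^*)^{-1}(q),n\bigr)$ for brevity, the plan is to (a) establish strong convergence $F_{i,j,k}\to F_{i,j,\infty}$ in $L^s_{\loc}(Q_\infty)$ for every $s\in[1,\infty)$, and (b) combine this with the weak $L^r_{\loc}$ convergence of $\rho_{j,k}$ via the splitting
\[
\int_K \varphi\bigl(\rho_{j,k}F_{i,j,k}-\rho_jF_{i,j,\infty}\bigr) = \int_K \varphi\,\rho_{j,k}(F_{i,j,k}-F_{i,j,\infty}) + \int_K (\varphi F_{i,j,\infty})(\rho_{j,k}-\rho_j),
\]
valid for any test function $\varphi\in L^{r'}_{\loc}(Q_\infty)$ with compact support $K$. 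The first term vanishes by H\"older's inequality using step (a) with $s=r'$ together with the uniform $L^r(K)$ bound on $\rho_{j,k}$ implied by weak convergence; the second vanishes by weak convergence of $\rho_{j,k}$, since $\varphi F_{i,j,\infty}\in L^{r'}_{\loc}$ thanks to the uniform $L^\infty$ bound in (F1).

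For step (a), I would extract a subsequence (not relabeled) so that $q_k\to q$ and $n_k\to n$ pointwise almost everywhere on every compact subset of $Q_\infty$, using the strong $L^{(2d+4)/(d+4)}_{\loc}(Q_\infty)$ and $L^2_{\loc}(Q_\infty)$ convergence provided in the hypothesis. The pointwise convergence $z_k\to z$ of proper, lower semicontinuous convex functions entails locally uniform convergence of the Legendre duals $z_k^*\to z^*$ on the interior of $\mathrm{dom}\,z^*$ (a standard consequence of Mosco convergence for sequences of convex functions that are pointwise convergent on $\RR$); combined with the Lipschitz well-posedness of $(z^*)^{-1}$ on $z^*(\RR)\cap(0,\infty)$ established in the first lemma of Section \ref{sec:equivalence}, this yields $(z_k^*)^{-1}(q_k)\to (z^*)^{-1}(q)$ pointwise a.e.\ on the set $\{q>0\}$. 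The continuity of $F_{i,j}$ from (F1) then upgrades this to pointwise a.e.\ convergence $F_{i,j,k}\to F_{i,j,\infty}$ on $\{q>0\}$, and the uniform $L^\infty$ bound in (F1) allows dominated convergence to conclude the strong $L^s_{\loc}$ convergence for every finite $s$.

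The main obstacle is the behavior on $\{q=0\}$, where $(z^*)^{-1}$ may fail to be continuous. Here the second lemma of Section \ref{sec:equivalence} is the key: the only way continuity fails is when $\partial z^*(p_0)=\{0\}$ at $p_0=(z^*)^{-1}(0)$, and in that scenario the monotonicity of $z_k^*$, combined with pointwise convergence $z_k^*\to z^*$, still forces $(z_k^*)^{-1}(q_k)\to p_0$ whenever $q_k\to 0^+$. Continuity of $F_{i,j}$ then delivers the pointwise convergence of $F_{i,j,k}$ on $\{q=0\}$ as well, so the dominated convergence argument extends to all of $Q_\infty$. Finally, since the limit $\rho_j F_{i,j,\infty}$ is uniquely determined independently of the subsequence, the standard subsequence principle upgrades the convergence along the extracted subsequence to convergence of the full sequence, completing the proof.
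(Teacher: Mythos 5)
Your plan hinges on step (a) --- proving strong $L^s_{\loc}$ convergence of $F_{i,j,k}:=F_{i,j}\big((z_k^*)^{-1}(q_k),n_k\big)$ \emph{in isolation} from $\rho_{j,k}$ --- and that step cannot be carried out. Assumption (F1) only gives continuity and uniform boundedness of $F_{i,j}$ on $\RR\times[0,\infty)$; it gives no limit as the first argument tends to $-\infty$. When $z^*(p)>0$ for every $p$ in its domain (take $z(a)=a\log a-a$, so $z^*(p)=e^p$ and $(z^*)^{-1}(q)=\log q$), one has $(z^*)^{-1}(q)\to-\infty$ as $q\to 0^+$, the set $\{q=0\}$ can have positive measure, and on that set $(z_k^*)^{-1}(q_k)$ escapes to $-\infty$. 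Taking $F_{1,1}(p,n)=\sin p$, which satisfies (F1)--(F2), shows that $F_{i,j,k}$ need not converge pointwise a.e.\ there, so the dominated-convergence step in (a) fails. Your reading of the second lemma of Section~\ref{sec:equivalence} also inverts the actual difficulty: when $p_0=\sup\{p:z^*(p)=0\}$ is finite, the extension $(z^*)^{-1}(0)=p_0$ \emph{is} continuous (merely non-Lipschitz), so the ``$\partial z^*(p_0)=\{0\}$'' scenario you focus on is harmless; the genuine obstruction is $p_0=-\infty$, where there is no finite target for $(z_k^*)^{-1}(q_k)$ to converge to and your monotonicity argument has no content.

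What saves the lemma --- and what your decomposition necessarily discards --- is the coupling between $\rho_{j,k}$ and $q_k$ supplied by the duality relation, which gives $\rho_k\in\partial z_k^*\big((z_k^*)^{-1}(q_k)\big)$ a.e. Wherever $\rho_{1,k}(t,x)\geq\delta$, the slope of $z_k^*$ at $(z_k^*)^{-1}(q_k(t,x))$ is at least $\delta$, so $(z_k^*)^{-1}$ is $\delta^{-1}$-Lipschitz at that value and $(z_k^*)^{-1}(q_k(t,x))$ remains in a $k$-uniform compact set; wherever $\rho_{1,k}<\delta$, the product $\rho_{1,k}F_{i,j,k}$ is $O(\delta)$ by (F1) alone. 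The paper's proof accordingly keeps $\rho_{1,k}$ paired with $F_{i,j,k}$ from the start: it truncates $q_k$ away from $0$ and from $z^*(b_\infty)$, uses the uniform convergence of $(z_k^*)^{-1}$ on the truncated range (Lemma~\ref{lem:convex_convergence_2}), and then controls the two remainder terms with the $\rho_{1,k}$-splitting and a three-parameter limit $k\to\infty$, $\epsilon\to 0^+$, $\delta\to 0^+$. Any correct argument must exploit that $\rho_{j,k}$ is small exactly where $(z_k^*)^{-1}(q_k)$ is uncontrolled; proving strong convergence of $F_{i,j,k}$ separately is precisely the move that cannot work.
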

\begin{proof}
It suffices to prove the convergence of $\rho_{1,k}F_{1,1}\big(z_k^{-1}(q_k),n_k\big)$ to $\rho_{1}F_{1,1}\big(z^{-1}(q),n\big)$, the argument for the other terms is identical. Let $\vp\in C_c^{\infty}(Q_{\infty})$ and let $D\subset Q_{\infty}$ be a compact set containing the support of $\vp$. 
For $N\in \RR$ define
$S_{k,N}:=\{(t,x)\in D:  q_k(t,x)+n_k(t,x)>N\}.$
From the uniform bounds on the norms of $q_k, n_k$ it follows that $ \lim_{N\to\infty} \sup_k |S_{k,N}|=0.$
Thus, we can assume without loss of generality that $q_k, n_k$ are uniformly bounded by some $M>0$ (and of course this same logic applies to $q, n$ as well).   

Let $b_{\infty}=\sup\{b\in\RR: z^*(b)<\infty\}$.
Fix $\epsilon\in (0, z^*(b_{\infty})/2)$ and let
$q_{k,\epsilon}=\min(\max(\epsilon, q_k), z^*(b_{\infty})-\epsilon), q_{\epsilon}=\min(\max(\epsilon, q), z^*(b_{\infty})-\epsilon)$.
It now follows that $(z_k^*)^{-1}(q_{k,\epsilon}), (z^*)^{-1}(q_{\epsilon})$ are uniformly bounded in $L^{\infty}(D)$.  Thanks to Lemma \ref{lem:convex_convergence}, we know that $(z_k^*)^{-1}$ converges uniformly to $(z^*)^{-1}$ on $(\epsilon, z^*(b_{\infty})-\epsilon)$.
Combining this with properties (F1-F2), and the various convergence properties of $q_k, n_k, \rho_{1,k}$ it follows that 
\[
\limsup_{k\to\infty}\Big|\int_{Q_{\infty}} \vp \Big(\rho_{1,k}F_{1,1}\big((z^*_k)^{-1}(q_{k,\epsilon}),n_k\big)-\rho_{1}F_{1,1}\big((z^*)^{-1}(q_{\epsilon}),n\big)\Big)\Big|=0.
\]
Thus, it remains to show that
\begin{equation}\label{eq:source_vanish}
\lim_{\epsilon\to 0^+} \Big|\int_{Q_{\infty}} \vp \rho_{1}\Big(F_{1,1}\big((z^*)^{-1}(q_{\epsilon}),n\big)-F_{1,1}\big((z^*)^{-1}(q),n\big)\Big)\Big|=0
\end{equation}
and
\begin{equation}\label{eq:source_k_vanish}
\lim_{\epsilon\to 0^+}\limsup_{k\to\infty} \Big|\int_{Q_{\infty}} \vp \rho_{1,k}\Big(F_{1,1}\big((z^*_k)^{-1}(q_{k,\epsilon}),n_k\big)-F_{1,1}\big((z^*_k)^{-1}(q_{k}),n_k\big)\Big)\Big|=0.
\end{equation}

To do this we will exploit the density pressure duality relationship. Thanks to the relationship between $e$ and $z$, we can express the duality relation as $(\rho_{1,k}+\rho_{2,k})(z^*_k)^{-1}(q_k)=z_k(\rho_{1,k}+\rho_{2,k})+q_k $. Fix some $\delta>0$ and split the support of $\rho_{1,k}$ into the sets $\rho_{1,k}<\delta$ and $\rho_{1,k}\geq \delta$.  Again using duality, we have 
\[
0\leq \rho_{1,k}\leq \rho_{1,k}+\rho_{2,k}\in \partial z_k^*\circ (z_k^*)^{-1}\circ q_k
\]
Thus, for almost every $(t,x)$  where $\rho_{1,k}(t,x)\geq \delta$, it follows that $(z_k^*)^{-1}$ is at worst $\delta^{-1}$ Lipschitz at the value $q_k(t,x)$ and $(z_k^*)^{-1}(q_k(t,x))$ is uniformly bounded with respect to $k$.  Thus,   
\[
 \Big|\int_{Q_{\infty}} \vp \rho_{1,k}\Big(F_{1,1}\big((z^*_k)^{-1}(q_{k,\epsilon}),n_k\big)-F_{1,1}\big((z^*_k)^{-1}(q_{k}),n_k\big)\Big)\Big|
\]
\[
\leq 
  B\delta\norm{\vp}_{L^1(D)}+\omega_{\delta}(2\epsilon\delta^{-1})\norm{\rho_{1,k}}_{L^1(D)}\norm{\vp}_{L^{\infty}(D)}+\norm{\rho_{1,k}\vp}_{L^{\infty}(D)}|D_{k,\epsilon}|
\]
where $B$ is a bound on $F_{1,1}$ and $\omega_{\delta}$ is the modulus of continuity of $F_{1,1}$ on the bounded set $\Big(\bigcup_{k}\{(z_k^*)^{-1}(q_k(t,x)): \rho_{1,k}(t,x)\geq \delta\}\Big)\times [0,M]$ and $D_{k,\epsilon}=\{(t,x)\in D: q_k(t,x)>z^*(b_{\infty})+\epsilon\}$. The convergence of $z_k$ to $z$ implies that $\limsup_{k\to\infty} |D_{k,\epsilon}|=0$ for all fixed $\epsilon>0$.   Thus, sending $k\to\infty$, then $\epsilon\to 0^+$, and then $\delta\to 0^+$, we get (\ref{eq:source_k_vanish}).  The strong convergence of $q_k$ implies that the duality relation $(\rho_1+\rho_2)(z^*)^{-1}(q)=z(\rho_1+\rho_2)+q$ holds, thus we can use a similar argument to obtain (\ref{eq:source_vanish}).

\end{proof}

At last, we are ready to prove our main result, which  will let us pass to the limit when we consider sequences of weak solutions to (\ref{eq:viscous_system_q}).  Note that the following theorem applies in the case where the viscosity is decreasing to zero along the sequence, as well as when the viscosity is zero along the entire sequence.
\begin{theorem}\label{thm:main}
Let $z_k$ be a sequence of energies satisfying (z1-z3). Suppose there exists an energy $z$ satisfying (z1-z3) such that $z_k$ converges pointwise everywhere to $z$.  Define $e_k, e$ by formula (\ref{eq:e_def}).  Let $\rho_1^0, \rho_2^0\in L^1(\RR^d)\cap L^{\infty}(\RR^d), n^0\in L^2(\RR^d)$ be initial data such that $e(\rho_1^0+\rho_2^0)\in L^1(\RR^d)$.  Let $V\in L^2_{\loc}([0,\infty);L^2(\RR^d))$ be a vector field such that $\nabla \cdot V\in L^{\infty}(Q_{\infty})$ and let $F_{i,j}$ be source terms satisfying (F1-F2).   Let $\rho_{1,k}, \rho_{2,k}\in \cX(e_k)$, $q_k\in \cY(e^*_k)$, $n_k\in L^2_{\loc}([0,\infty);H^1(\RR^d))$ be sequences of density pressure and nutrient variables such that $\nabla \rho_{1,k}, \nabla \rho_{2,k}\in L^2_{\loc}([0,\infty);L^2(\RR^d))$.  Suppose that for each $k$, the variables $(\rho_{1,k}, \rho_{2,k}, q_k, n_k)$ are weak solutions to the system (\ref{eq:viscous_system_q}) with energy $e_k$, viscosity constant $\gamma_k\geq 0$, and initial data $(\rho_1^0, \rho_2^0, n^0)$. 
 If $\gamma_k$ converges to $0$ and at least one of the following two conditions hold: 
 \begin{enumerate}[(a)]
     \item \label{sc:a} $\partial z(a)$ is a singleton for all $a\in (0,\infty)$,
     \item \label{sc:b} the source terms satisfy the additional condition (F3),
 \end{enumerate}
 then any limit point $(\rho_1, \rho_2, q,n)$ of the sequence is a solution of (\ref{eq:system_q}).
\end{theorem}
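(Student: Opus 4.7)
The plan is to extract a further subsequence via the uniform a priori bounds of Proposition \ref{prop:estimates}, upgrade to strong $L^2_{\loc}$-convergence of $\nabla q_k$ using Propositions \ref{prop:e_e^*} and \ref{prop:strong_convergence}, and then pass to the limit term-by-term in the weak formulations (\ref{eq:weak_1})--(\ref{eq:weak_3}). Writing $\rho_k := \rho_{1,k} + \rho_{2,k}$, Proposition \ref{prop:estimates} yields uniform $L^\infty_{\loc}([0,\infty); L^1\cap L^\infty)$ bounds on $\rho_k$, $L^2_{\loc}([0,\infty);L^2(\RR^d))$ bounds on $\nabla q_k$, $L^{(2d+4)/(d+4)}_{\loc}([0,\infty);L^2_{\loc}(\RR^d))$ bounds on $q_k$, and $L^2_{\loc}([0,\infty);H^{-1}(\RR^d))$ bounds on $\partial_t \rho_k$; moreover $\sqrt{\gamma_k}\,\nabla \rho_{i,k}$ is bounded in $L^2_{\loc}$, so $\gamma_k \nabla \rho_{i,k} \to 0$ in $L^2_{\loc}$. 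Aubin--Lions applied to (\ref{eq:weak_3}) supplies strong $L^2_{\loc}$-convergence of $n_k$. Passing to a subsequence, we extract weak limits $\rho_i, q, n$ in the appropriate spaces.

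Next, I apply the compensated compactness Lemma \ref{lem:spacetime_cc} with $u_k = \rho_k$ and $v_k = q_k$: the $H^{-1}$ time regularity of $\rho_k$ yields spatial equicontinuity of its mollifications, while the spatial $H^1$-regularity of $q_k$ combined with the summed parabolic equation (\ref{eq:parabolic_q}) gives the time equicontinuity of the spatial mollifications of $q_k$. This produces $\rho_k q_k \rightharpoonup \rho q$ in distribution, so Proposition \ref{prop:e_e^*} delivers the weak $L^1_{\loc}$-convergences $e_k(\rho_k) \rightharpoonup e(\rho)$ and $e^*_k(q_k) \rightharpoonup e^*(q)$ together with the duality relation $\rho q = e(\rho)+e^*(q)$ for the limit. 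To invoke Proposition \ref{prop:strong_convergence}, I still need the limit dissipation inequality (\ref{eq:limit_edi}) (which follows from Proposition \ref{prop:edr} after verifying the limit variables solve the summed parabolic equation) and, most delicately, the hypothesis (\ref{eq:qmu_limit}) controlling $\int \omega q_k \mu_k$.

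Verifying (\ref{eq:qmu_limit}) is the main obstacle and is where the dichotomy between (a) and (b) becomes essential. In case (a), differentiability of $z$ on $(0,\infty)$ combined with the identity $\partial e(a) = \{z^*(z'(a))\}$ makes $e$ differentiable on $(0,\infty)$, hence $e^*$ strictly convex on its effective range; the weak $L^1_{\loc}$-convergence $e^*_k(q_k) \rightharpoonup e^*(q)$ combined with strict convexity then promotes to local convergence in measure $q_k \to q$, and together with the uniform $L^r$-bounds, continuity of $F_{i,j}$, and strong convergence of $n_k$, this yields $q_k \mu_k \to q\mu$ in $L^1_{\loc}$. In case (b), $e^*$ may be linear on a flat interval and convergence in measure of $q_k$ can fail; instead, I exploit (F3), which states that the aggregated coefficients $G_j := F_{1,j}+F_{2,j}$ are decreasing in $p$ and hence, via the monotonicity of $(z_k^*)^{-1}$, decreasing in $q$. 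A Minty-type monotonicity argument--pairing the weak limit of $q_k$ with the weak limit of $\rho_{i,k} G_j((z_k^*)^{-1}(q_k), n_k)$ and exploiting the nonpositive sign of $(q_k - \varphi)(G_j(q_k) - G_j(\varphi))$ against test values $\varphi$--delivers the one-sided inequality (\ref{eq:qmu_limit}).

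With $\nabla q_k \to \nabla q$ strongly in $L^2_{\loc}$ in hand, I pass to the limit in each term of (\ref{eq:weak_1})--(\ref{eq:weak_2}). The viscous flux $\gamma_k \nabla \rho_{i,k}\cdot \nabla \psi$ vanishes because $\sqrt{\gamma_k}\,\nabla \rho_{i,k}$ is bounded in $L^2_{\loc}$ while $\sqrt{\gamma_k} \to 0$. The nonlinear flux $\frac{\rho_{i,k}}{\rho_k}\nabla q_k\cdot \nabla \psi$ is split as $\frac{\rho_{i,k}}{\rho_k}(\nabla q_k - \nabla q)\cdot \nabla \psi + \frac{\rho_{i,k}}{\rho_k}\nabla q\cdot \nabla \psi$: the first piece goes to zero by the strong $L^2$-convergence of $\nabla q_k$ and the uniform bound $\frac{\rho_{i,k}}{\rho_k} \leq 1$, while the second is handled via Lemma \ref{lem:chain_rule_convergence}, which controls integrals weighted by $|\nabla q|^2$ on the concentration set and exploits that $\nabla q = 0$ a.e.\ on $\{\rho = 0\}$ to identify the limit as $\frac{\rho_i}{\rho}\nabla q\cdot \nabla \psi$. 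The advective terms $\rho_{i,k} V\cdot \nabla \psi$ pass by weak-strong duality, and the source terms converge via Lemma \ref{lem:source_limit}. Finally (\ref{eq:weak_3}) passes by pairing strong $L^2_{\loc}$-convergence of $n_k$ with weak $L^2_{\loc}$-convergence of $\rho_{i,k}$ and weak $L^2_{\loc}$-convergence of $\nabla n_k$. Assembling, $(\rho_1,\rho_2,q,n)$ is a weak solution of (\ref{eq:system_q}).
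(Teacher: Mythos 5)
Your overall roadmap matches the paper's (uniform bounds from Proposition \ref{prop:estimates}, compensated compactness for the product $\rho q$, the dissipation relation combined with Proposition \ref{prop:strong_convergence} to upgrade to strong $L^2$ convergence of $\nabla q_k$, then term-by-term passage to the limit with Lemmas \ref{lem:chain_rule_convergence} and \ref{lem:source_limit}), and the dichotomy you describe for cases (a) and (b) is the right one. However there is a genuine gap in the way you invoke Lemma \ref{lem:spacetime_cc}.

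You apply the lemma with $u_k=\rho_k$ and $v_k=q_k$, claiming that the $H^{-1}$ time regularity of $\rho_k$ gives spatial equicontinuity of its mollifications, and that ``the spatial $H^1$-regularity of $q_k$ combined with the summed parabolic equation (\ref{eq:parabolic_q}) gives the time equicontinuity of the spatial mollifications of $q_k$.'' This is backwards and cannot be repaired as stated. First, the hypothesis on $u_k$ in the lemma is that $u_k$ \emph{itself} (not its mollification) is spatially equicontinuous; $\rho_k$ has no such spatial regularity --- the whole difficulty of the problem is that the densities can develop discontinuities. Second, and more fatally, equation (\ref{eq:parabolic_q}) reads $\partial_t\rho-\Delta q+\nabla\cdot(\rho V)=\mu$, which controls $\partial_t\rho_k$, not $\partial_t q_k$. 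There is no time-regularity estimate for $q_k$, and in the degenerate case the coupling $\rho\leftrightarrow q$ is flat so you cannot transfer time regularity from $\rho_k$ to $q_k$; this is precisely the obstruction the paper flags in the introduction as the reason a direct regularity approach to the pressure fails. The correct assignment is the opposite: take $u_k=q_k$, whose spatial equicontinuity in $L^{\frac{2d+4}{d+4}}_{\loc}$ follows from the uniform $L^2$ bound on $\nabla q_k$ (estimates (\ref{eq:nabla_q_control}), (\ref{eq:rho_p_extra_control})), and take $v_k=\rho_{i,k}$, whose mollifications $\eta_\epsilon*\rho_{i,k}$ are space-time equicontinuous because the $L^\infty_{\loc}(L^1\cap L^\infty)$ bound controls $\nabla(\eta_\epsilon*\rho_{i,k})$ and the $H^1_{\loc}(H^{-1})$ bound on $\partial_t\rho_{i,k}$ controls $\partial_t(\eta_\epsilon*\rho_{i,k})$.

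A secondary point: you run the compensated compactness lemma only for the total density $\rho_k=\rho_{1,k}+\rho_{2,k}$, obtaining $\rho_k q_k\rightharpoonup\rho q$. While that is enough to invoke Proposition \ref{prop:e_e^*}, the argument in case (b) (your Minty step) and the limit identification of the flux $\tfrac{\rho_{i,k}}{\rho_k}\nabla q_k$ require the individual products $\rho_{i,k}q_k\rightharpoonup\rho_i q$ for $i=1,2$; since $\rho_{i,k}$ does not converge strongly, you must apply the lemma directly to the pairs $(q_k,\rho_{i,k})$ as in the paper rather than to $(q_k,\rho_k)$.
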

\begin{proof}
\textit{Step 1: Uniform bounds, basic convergence properties, and parabolic structure.}

Summing the first two equations of (\ref{eq:viscous_system_q}) together, we see that for any test function $\psi\in W^{1,1}_c([0,\infty);H^1(\RR^d))$ $\rho_k, q_k$ are weak solutions to the parabolic equation
\begin{equation}\label{eq:parabolic_k_1}
\int_{\RR^d} \psi(0,x)\rho^0=\int_{Q_{\infty}}  -\rho_{k}\partial_t\psi +\nabla \psi \cdot (\nabla q_k+\gamma_k\nabla \rho_k)-\rho_k\nabla\psi  \cdot V-\psi\mu_k
\end{equation}
where $\rho_k=\rho_{1,k}+\rho_{2,k}$,  $\mu_k=\mu_{1,k}+\mu_{2,k}$ and $\mu_{i,k}=\rho_{1,k} F_{i,1}\big((z_k^*)^{-1}(q_k,n_k)\big)+\rho_{2,k} F_{i,2}\big((z_k^*)^{-1}(q_k,n_k)\big)$.  

Thanks to Proposition \ref{prop:estimates}, $\rho_k, q_k, \mu_k$ must satisfy the energy dissipation inequality \[
 \int_{Q_{\infty}} -e(\rho_k)\partial_t \omega+\omega|\nabla q_k|^2+\omega e^*(q_k)\nabla \cdot V-\omega\mu_k q_k \leq \int_{\RR^d}\omega(0) e(\rho^0(x))\, dx,
\]
for every nonnegative $\omega\in W^{1,\infty}([0,\infty))$ and the estimates
(\ref{eq:gamma_nabla_rho})-(\ref{eq:rho_p_extra_control}). After plugging estimate (\ref{eq:gamma_nabla_rho}) into estimate (\ref{eq:rho_h_minus1}), it follows that all of the estimates (\ref{eq:l1_growth}-\ref{eq:rho_p_extra_control}) are independent of $k$ and only depend on $\rho^0$, $V$ and the bounds on $F_{i,j}$. Thus, $\rho_k, q_k$ are uniformly bounded in the norms estimated in (\ref{eq:l1_growth})-(\ref{eq:rho_p_extra_control}). As a result, there must exist $\rho\in \cX(e)$, $q\in \cY(e^*)$ and $\mu\in L^{\infty}_{\loc}([0,\infty); L^{\infty}(\RR^d)\cap L^1(\RR^d))$ such that $\rho_k, q_k,\mu_k$ converge weakly in $L^{\frac{2d+4}{d+4}}_{\loc}([0,\infty);L^2_{\loc}(\RR^d))$ (along a subsequence that we do not relabel) to $\rho, q, \mu$ respectively.  Note that for $\rho_k, \mu_k$ the weak convergence in fact holds in $L^{r}_{\loc}(Q_{\infty})$ for any $r<\infty$.

Property (F2) implies that $0\leq \rho_{1,k}, \rho_{2,k}\leq \rho_k$.  Hence, $\rho_{1,k}, \rho_{2,k}$ are uniformly bounded in $L^{\infty}_{\loc}([0,\infty);L^1(\RR^d)\cap L^{\infty}(\RR^d))$ and there exist limit points $\rho_1, \rho_2$ (and a subsequence that we do not relabel) such that $\rho_{1,k}, \rho_{2,k}$ converge weakly in $L^{r}_{\loc}([0,\infty);L^1(\RR^d)\cap L^{r}(\RR^d))$ to $\rho_1, \rho_2$ respectively for any $r<\infty$.  Furthermore, the bounds on $\rho_{1,k}, \rho_{2,k}$ combined with standard results for the heat equation imply that $n_k$ is uniformly bounded in $L^2_{\loc}([0,\infty);H^1(\RR^d))\cap H^1_{\loc}([0,\infty);H^{-1}(\RR^d))$.  Hence, the Aubin-Lions Lemma implies that there exists a limit point $n\in L^2_{\loc}([0,\infty);H^1(\RR^d))$ and a subsequence (that we do not relabel) such that $n_k$ converges to $n$ in $L^2_{\loc}([0,\infty);L^2(\RR^d))$.

Thanks to the linear structure of equation (\ref{eq:parabolic_k_1}), the convergence properties we have established are strong enough to send $k\to\infty$. Thus, $\rho, q, \mu$ satisfy the weak equation
\begin{equation}\label{eq:precompact_weak_limit_eq}
\int_{\RR^d} \psi(0,x) \rho^0(x)\, dx= \int_{Q_{\infty}} \nabla q\cdot \nabla \psi-\rho\partial_t\psi-\rho V\cdot\nabla \psi-\mu \psi.
\end{equation}
for any $\psi\in W^{1,1}_c([0,\infty);H^1(\RR^d))$
After taking the limit, the bounds on $\rho, q, \mu$ inherited from the estimates (\ref{eq:l1_growth}-\ref{eq:rho_p_extra_control}) allow us to conclude that (\ref{eq:precompact_weak_limit_eq}) holds for any  $\psi\in W^{1,1}_c([0,\infty);L^1(\rho)\cap \dot{H}^1(\RR^d))$. Thus, Proposition \ref{prop:edr} implies that  for every $\omega\in W^{1,\infty}_c([0,\infty))$ the limit variables $\rho, \mu, q$ satisfy the energy dissipation relation
\[
 \int_{\RR^d}\omega(0) e(\rho(0,x))\, dx=  \int_{Q_{\infty}} -e(\rho)\partial_t \omega+\omega|\nabla q|^2+\omega e^*(q)\nabla \cdot V-\omega\mu q.
\]

\noindent\textit{Step 2: Weak convergence of the products $\rho_{1,k}q_k, \rho_{2,k}q_k$.}

We want to use Lemma \ref{lem:spacetime_cc} to prove that $\rho_{i,k}q_k$ converges weakly  to $\rho_i q$ for $i=1,2$.   This will imply that $\rho_k q_k$ converges weakly to $\rho q$.   Fix some $\epsilon>0$ and let $\eta_{\epsilon}$ be a spatial mollifier.  Define $\rho_{i,k,\epsilon}=\eta_{\epsilon}*\rho_{i,k}$ and $\rho_{i,\epsilon}=\eta_{\e}*\rho_i$.
Thanks to estimates  (\ref{eq:l1_growth}-\ref{eq:rho_linf}), it follows that
 \[
 \sup_k\;\; \norm{\partial_t \rho_{i,k,\epsilon}}_{L^2(Q_T)}+\norm{\nabla \rho_{i,k,\epsilon}}_{L^2(Q_T)}\lesssim_{\epsilon} \sup_k\; \norm{\rho_{i,k}}_{L^2(Q_T)}+\norm{\rho_{i,k}}_{H^1([0,T];H^{-1}(\RR^d))}<\infty.
 \]
 Thus, for $\epsilon>0$ fixed, $\rho_{i,k,\epsilon}$ is uniformly equicontinuous in $L^2(Q_T)$.  The uniform bounds (\ref{eq:l1_growth}) and (\ref{eq:rho_linf}) automatically upgrade this to uniform equicontinuity in  $L^r(Q_T)\cap L^1(Q_T)$ for any $r<\infty$. In addition, the estimates (\ref{eq:rho_p_extra_control}) and (\ref{eq:nabla_q_control}) imply that $q_k$ is spatially equicontinuous in $L^{\frac{2d+4}{d+4}}_{\loc}(Q_{\infty})$. Thus, we can apply Lemma \ref{lem:spacetime_cc} to conclude that $\rho_{i,k}q_k$ converges weakly in $(C_c(Q_{\infty}))^*$ to $\rho_i q$ for $i=1,2$.  The uniform boundedness of $\rho_{i,k} q_k$ in $L_{\loc}^{\frac{2d+4}{d+4}}([0,\infty);L^2(\RR^d))$ gives us the automatic upgrade to weak convergence in $L_{\loc}^{\frac{2d+4}{d+4}}([0,\infty);L^2(\RR^d))$.  Now Proposition \ref{prop:e_e^*} implies that $\rho q=e(\rho)+e^*(q)$ almost everywhere and $e(\rho_k)$ and $e^*(q_k)$ converge weakly to $e(\rho)$ and $e^*(q)$ respectively.

 \noindent\textit{Step 3: Strong convergence of $\nabla q_k$ to $\nabla q$ in $L^2_{\loc}([0,\infty);L^2(\RR^d))$.}
 
 We now want to use Proposition \ref{prop:strong_convergence} to prove the strong convergence of the pressure gradient.  Note that the pointwise everywhere convergence of $z_k$ to $z$ implies the pointwise everywhere convergence of $e_k$ to $e$. 
 We have already shown that $\rho_k q_k$ converges weakly to $\rho q$ and verified the inequalities (\ref{eq:sequence_edi}) and (\ref{eq:limit_edi}).  Thus it remains to show that the upper semicontinuity property (\ref{eq:qmu_limit}) holds.  To verify this condition, we will need to consider the scenarios (\ref{sc:a}) and (\ref{sc:b}) separately.
 
 \noindent\textit{Step 3a: Scenario (\ref{sc:a}) holds.}
 When $\partial z(a)$ is a singleton for all $a\in (0,\infty)$, it follows that $\partial e(a)$ is a singleton for all $a\in (0,\infty)$ and hence $e^*$ must be strictly convex on $(0,\infty)\cap (e^*)^{-1}(\RR)$.
Thus,  Lemma \ref{lem:in_measure} implies that $q_k$ converges in measure to $q$.  Since $q_k$ is uniformly bounded in $L^{\frac{2d+4}{d+4}}_{\loc}([0,\infty);L^2_{\loc}(\RR^d))$, we can upgrade the convergence in measure to strong convergence  in $L^{r}_{\loc}(Q_{\infty})$  for any $r<\frac{2d+4}{d+4}$.
 From the strong convergence, it is automatic that 
\[
\limsup_{k\to\infty}\int_{Q_{\infty}} \omega\mu_kq_k= \int_{Q_{\infty}} \omega \mu q
\]
 any $\omega\in W^{1,\infty}_c([0,\infty))$.

\noindent\textit{Step 3b: Scenario (\ref{sc:b}) holds}

Without strict convexity of the dual energy, the weak convergence of $e^*_k(q_k)$ does not give us strong convergence of $q_k$. Thus, to prove (\ref{eq:qmu_limit}) we will need a more delicate argument that exploits the structure of the product $q_k\mu_k$

We begin by fixing some $\delta>0$ and letting $J_{\delta}$ be a space time mollifier.  Set $q_{k,\delta}:=J_{\delta}*q_k$ and $q_{\delta}:=q*J_{\delta}$.  It is clear that $q_{k,\delta}$ converges strongly to $q_{\delta}$ in $L^{2}_{\loc}([0,\infty);L^2_{\loc}(\RR^d))$ and $q_{\delta}$ converges strongly to $q$ in $L^{\frac{2d+4}{d+4}}_{\loc}([0,\infty);L^2_{\loc}(\RR^d))$. Thus, it will be enough to show that
 \[
 \liminf_{\delta\to 0}\limsup_{k\to\infty} \int_{Q_{\infty}} \omega (q_{k}-q_{k,\delta})\mu_{i,k}\leq 0,
 \]
 for $i=1,2$.
 
We focus on the case $i=1$ (the argument for $i=2$ is identical).  Assumption (F3) and the monotonicity of $(z_k^*)^{-1}$ guarantees that $q\mapsto F_{1,1}\big((z^*_k)^{-1}(q),n\big)+F_{1,2}\big((z^*_k)^{-1}(q),n\big)$ is decreasing for each fixed value of $n$. As a result, there must exist a function $f_k:[0,\infty)\times [0,\infty)\to \RR$ such that for each fixed value of $n$, we have $f_k(0,n)=0$, $q\mapsto f_k(q,n)$ is convex, and  $-\partial_q f_k(q,n)=F_{1,1}\big((z^*_k)^{-1}(q),n\big)+F_{1,2}\big((z^*_k)^{-1}(q),n\big)$. The structure of $\mu_{1,k}$ combined with the convexity of $f_k$ implies that
\[
\int_{Q_{\infty}} \omega (q_{k}-q_{k,\delta})\mu_{i,k}\leq \int_{Q_{\infty}} \omega \rho_{1,k}\big( f_k(q_{k,\delta},n_k)-f_k(q_k,n_k)).
\]

Since $F_{1,1}+F_{1,2}$ is uniformly bounded over $\RR\times [0,\infty)$, it follows that $f_k$ is uniformly Lipschitz in the first argument. Uniform equicontinuity in the second argument is clear when $q=0$.   For $q>0$, fix some $\epsilon\in (0,q)$  and consider $n_1, n_2\geq 0$.  We see that
\[
|f_k(q,n_1)-f_k(q,n_2)|\leq \sum_{i=1}^2\int_0^q  |F_{1,i}\big((z_k^*)^{-1}(a),n_1\big)-F_{1,i}\big((z_k^*)^{-1}(a),n_2\big)|da.
\]
\[
\leq 2B\epsilon+q\sup_{b\in [(z_k^*)^{-1}(\epsilon),(z^*_k)^{-1}(q)]}\sum_{i=1}^2|F_{1,i}(b,n_1\big)-F_{1,i}\big(b,n_2\big)|, 
\]
where $B$ is a bound on $F_{1,1}+F_{1,2}$. Assumption (z3) and the pointwise everywhere convergence of $z_k$ to $z$ implies that  $(z_k^*)^{-1}(\epsilon),(z^*_k)^{-1}(q)$ are uniformly bounded with respect to $k$.  Thus, it now follows that $f_k$ is uniformly equicontinuous in the second argument on compact subsets of $[0,\infty)^2$.  As a result, $f_k$ must converge uniformly on compact subsets of $[0,\infty)^2$ to a limit function $f$ that is convex in the first variable and continuous in the second.

For all $k$ we have $|f_k(q,n)|\leq Bq$.  Thus,  it is now clear that 
 \[
\liminf_{\delta\to 0}\limsup_{k\to\infty} \int_{Q_{\infty}} \omega \rho_{1,k}\Big(| f_k(q_{k,\delta},n_k)-f(q,n)|+| f_k(q_{k},n_k)-f(q_k,n_k)|+ |f(q_k,n)-f(q_k,n_k)|\Big)=0.
\]
It remains to prove that
 \[
\limsup_{k\to\infty} \int_{Q_{\infty}} \omega \rho_{1,k}\big( f(q,n)-f(q_k,n))\leq 0.
\]

Let $f^*(a,n)=\sup_{q\in [0,\infty)} aq-f(q,n)$.  Given any smooth function $\psi\in C^{\infty}_c(Q_{\infty})$, we have 
 \[
 \int_{Q_{\infty}} \omega \rho_{1,k}\big( f(q,n)-f(q_k,n))\leq \int_{Q_{\infty}} \omega \rho_{1,k}\big( f(q,n)-q_k \psi)+\omega\rho_{1,k} f^*(\psi,n).
 \]
 Using the weak convergence of the product $\rho_{1,k}q_k$ to $\rho_1 q$ we see that
 \[
\limsup_{k\to\infty} \int_{Q_{\infty}} \omega \rho_{1,k}\big( f(q,n)-q_k \psi)+\rho_{1,k} f^*(\psi,n)=\int_{Q_{\infty}} \omega \rho_{1}\big( f(q,n)-q \psi)+\omega\rho_{1} f^*(\psi,n).
 \]
 Taking an infimum over $\psi$, we get 
 \[
\limsup_{k\to\infty} \int_{Q_{\infty}} \omega \rho_{1,k}\big( f(q,n)-f(q_k,n))\leq 0.
\]
as desired.

\noindent\textit{Step 4: Passing to the limit in the weak equations}

Now that we have obtained the strong convergence of the pressure gradient, we are ready to pass to the limit in the weak equations.  In Lemma \ref{lem:source_limit}, we showed that the source terms converge weakly to the desired limit under the convergence properties that we have established.   The weak convergence of the remaining terms is clear  except for the weak convergence of the product $\frac{\rho_{i,k}}{\rho_k}\nabla q_k$ to $\frac{\rho_{i}}{\rho}\nabla q$.  
Given some $\delta>0$, it follows from Lemma \ref{lem:chain_rule_convergence} that  $\frac{1}{\rho_k+\delta}\nabla q_k$ converges strongly in $L^2_{\loc}([0,\infty);L^2(\RR^d))$ to $\frac{1}{\rho+\delta}\nabla q$.  Thus, if we can show that
\begin{equation}\label{eq:delta_rho}
\liminf_{\delta\to 0}\Big( \int_{Q_T} \frac{\delta \rho_{i}}{\rho(\rho+\delta)}|\nabla q|^2+\limsup_{k\to\infty} \int_{Q_T} \frac{\delta \rho_{i,k}}{\rho_k(\rho_k+\delta)}|\nabla q_k|^2\Big)=0,
\end{equation}
then it will follow that $\frac{\rho_{i,k}}{\rho_k}\nabla q_k$  converges weakly in $L^2_{\loc}([0,\infty);L^2(\RR^d))$ to $\frac{\rho_{i}}{\rho}\nabla q$.

Since $\rho_{i,k}\leq \rho_k$ and $\rho_i\leq \rho$, the left hand side of (\ref{eq:delta_rho}) is bounded above by
\[
\liminf_{\delta\to 0}\Big( \int_{Q_T} \frac{\delta}{\rho+\delta}|\nabla q|^2+\limsup_{k\to\infty} \int_{Q_T} \frac{\delta}{\rho_k+\delta}|\nabla q_k|^2\Big)
\]
\[
=\liminf_{\delta\to 0} \int_{Q_T} \frac{2\delta}{\rho+\delta}|\nabla q|^2,
\]
where we have used Lemma \ref{lem:chain_rule_convergence} to go from the first line to the second.  The property $\limsup_{a\to 0^+}\frac{e(a)}{a}=0$ combined with the duality relation implies that $q=0$ whenever $\rho=0$.  As a result, $|\nabla q|$ gives no mass to the set of points where $\rho=0$. By dominated convergence
\[
\liminf_{\delta\to 0} \int_{Q_T} \frac{2\delta}{\rho+\delta}|\nabla q|^2=0.
\]

\end{proof}

\begin{cor}\label{cor:existence_strict}
Let $e$ be an energy satisfying (e1-e3) such that $\partial e(a)$ is a singleton for all $a\in (0,\infty)$.  Let $F_{i,j}$ be source terms satisfying (F1-F2).   Given initial data $\rho_1^0, \rho_2^0\in L^1(\RR^d)\cap L^{\infty}(\RR^d), n^0\in L^2(\RR^d)$ such that $e(\rho_1^0+\rho_2^0)\in L^1(\RR^d)$,
 there exists a weak solution $(\rho_1, \rho_2, q, n)\in \cX(e)\times \cX(e)\times \cY(e^*)\times L^2_{\loc}([0,\infty);H^1(\RR^d))$ to the system (\ref{eq:system_q}).
\end{cor}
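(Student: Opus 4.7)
The corollary is a direct consequence of Theorem~\ref{thm:main} once a sequence of viscous approximations has been produced. The plan is therefore to (i) construct, for each $\gamma>0$, a weak solution of the regularized system (\ref{eq:viscous_system_q}), and then (ii) apply Theorem~\ref{thm:main} with the constant energy sequence $z_k\equiv z$ (where $z$ is the energy from which $e$ arises via (\ref{eq:e_def})) and vanishing viscosities $\gamma_k\to 0^+$.

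\textbf{Step 1: existence for the viscous system.} For each fixed $\gamma>0$, the extra diffusion term $-\gamma\Delta\rho_i$ forces the individual densities into $L^2_{\loc}([0,\infty);H^1(\RR^d))\cap H^1_{\loc}([0,\infty);H^{-1}(\RR^d))$, which embeds compactly in $L^2_{\loc}([0,\infty);L^2(\RR^d))$ by Aubin--Lions. This is exactly the amount of regularity needed to close a Schauder/Picard fixed-point argument for a further-regularized system in which the nonlinear mobility $\rho_i/\rho$ is replaced by $\rho_i/(\rho+\delta)$ and all remaining coefficients are mollified. Solutions of this doubly regularized system exist by standard parabolic theory, and one removes the inner parameter $\delta\to 0$ using the a priori estimates of Proposition~\ref{prop:estimates} together with a chain-rule argument in the spirit of Lemma~\ref{lem:chain_rule_convergence} to pass to the limit in $(\rho_i/(\rho+\delta))\nabla q$. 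This yields, for every $\gamma>0$, a weak solution $(\rho_{1,\gamma},\rho_{2,\gamma},q_\gamma,n_\gamma)$ of (\ref{eq:viscous_system_q}) with the prescribed initial data.

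\textbf{Step 2: vanishing viscosity via Theorem~\ref{thm:main}.} Fix any energy $z$ satisfying (z1--z3) that generates the given $e$ through formula (\ref{eq:e_def}). By the lemma in Section~\ref{sec:equivalence} identifying $\partial e(a)=\{z^*(b):b\in\partial z(a)\}$ for $a\neq 0$, the assumption that $\partial e(a)$ is a singleton for all $a>0$ is equivalent to $\partial z(a)$ being a singleton for all $a>0$. Consequently, scenario~(\ref{sc:a}) of Theorem~\ref{thm:main} applies. Taking $z_k\equiv z$ (so $e_k\equiv e$), $\gamma_k\to 0^+$, and the viscous solutions from Step~1, all the hypotheses of Theorem~\ref{thm:main} are satisfied. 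Any limit point $(\rho_1,\rho_2,q,n)$ produced by the theorem is then a weak solution of (\ref{eq:system_q}), and the required membership $\rho_i\in\cX(e)$, $q\in\cY(e^*)$, $n\in L^2_{\loc}([0,\infty);H^1(\RR^d))$ follows from the uniform estimates (\ref{eq:l1_growth})--(\ref{eq:rho_p_extra_control}), which are preserved under weak lower semicontinuity.

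\textbf{Main obstacle.} The conceptual content of the corollary is entirely contained in Theorem~\ref{thm:main}; the only substantive work is Step~1, namely producing weak solutions of (\ref{eq:viscous_system_q}) for fixed $\gamma>0$. The technical difficulty there is handling the mobility $\rho_i/\rho$ where $\rho$ can vanish. Once one introduces the truncation $\rho_i/(\rho+\delta)$, the Picard iteration is classical but tedious, and sending $\delta\to 0^+$ requires essentially the same chain-rule trick used in Lemma~\ref{lem:chain_rule_convergence}, exploiting that $|\nabla q|$ gives no mass to $\{\rho=0\}$ because $\limsup_{a\to 0^+}e(a)/a=0$. After this, the rest of the proof is purely an invocation of the machinery already established.
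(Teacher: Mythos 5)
Your proposal is correct and follows the paper's proof essentially verbatim: construct viscous solutions of (\ref{eq:viscous_system_q}) for $\gamma_k=1/k$, then invoke Theorem~\ref{thm:main} in scenario~(\ref{sc:a}) (using the equivalence $\partial e(a)$ singleton $\iff\partial z(a)$ singleton from Section~\ref{sec:equivalence}) to pass to the vanishing-viscosity limit. The only difference is one of presentation: you spell out the Picard/Schauder construction and the $\delta\to0$ truncation argument for the viscous system, whereas the paper treats this as ``straightforward'' and relegates those details to the remarks preceding Theorem~\ref{thm:main}.
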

\begin{proof}
For $\gamma_k=\frac{1}{k}$, the existence of a solution to the system (\ref{eq:viscous_system_q}) for the fixed energy $e$ is straightforward.  Using these solutions, we can pass to the limit as $k\to\infty$ using Theorem \ref{thm:main}. 
\end{proof}

\begin{cor}\label{cor:existence_G4}
Let $e$ be an energy satisfying (e1-e3) and let $F_{i,j}$ be source terms satisfying (F1-F3).   Given initial data $\rho_1^0, \rho_2^0\in L^1(\RR^d)\cap L^{\infty}(\RR^d), n^0\in L^2(\RR^d)$ such that $e(\rho_1^0+\rho_2^0)\in L^1(\RR^d)$,
 there exists a weak solution $(\rho_1, \rho_2, q, n)\in \cX(e)\times \cX(e)\times \cY(e^*)\times L^2_{\loc}([0,\infty);H^1(\RR^d))$ to the system (\ref{eq:system}).
\end{cor}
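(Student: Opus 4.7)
The plan is to run the vanishing viscosity scheme of Theorem \ref{thm:main} with the fixed energy $e$ and then undo the change of variables $q=z^*(p)$ from Section \ref{sec:equivalence} to produce a pressure $p$ satisfying the original system (\ref{eq:system}). First, for each $k\in\N$ I would construct a weak solution $(\rho_{1,k},\rho_{2,k},q_k,n_k)$ of the viscous system (\ref{eq:viscous_system_q}) with viscosity $\gamma_k=1/k$ and the prescribed initial data. This is routine: the additional Laplacians furnish each $\rho_{i,k}$ with a uniform $L^2_{\loc}([0,\infty);H^1(\RR^d))$ bound via (\ref{eq:gamma_nabla_rho}) and a uniform $H^1_{\loc}([0,\infty);H^{-1}(\RR^d))$ bound via (\ref{eq:rho_h_minus1}), so Aubin-Lions closes a standard Picard iteration inside an even more regularized problem, exactly as indicated in the discussion preceding Proposition \ref{prop:strong_convergence}.

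Next I would apply Theorem \ref{thm:main} with $z_k\equiv z$ (so $e_k\equiv e$) and $\gamma_k\to 0$. Since (F3) is assumed, scenario (\ref{sc:b}) of that theorem is available, so any limit point $(\rho_1,\rho_2,q,n)\in\cX(e)\times\cX(e)\times\cY(e^*)\times L^2_{\loc}([0,\infty);H^1(\RR^d))$ is a weak solution of the transformed system (\ref{eq:system_q}) with the prescribed initial data; this already delivers the tuple listed in the statement.

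To convert the $q$-solution into a genuine solution of (\ref{eq:system}), I would define $p:=(z^*)^{-1}(q)$ on $\{q\in z^*(\RR)\cap(0,\infty)\}$ and extend by the constant $p_0:=\sup\{b:z^*(b)=0\}$ on the complement. The first lemma of Section \ref{sec:equivalence} gives that $(z^*)^{-1}$ is Lipschitz on $z^*(\RR)\cap(0,\infty)$, so $p$ inherits Sobolev regularity wherever $q$ is bounded away from $0$. The characterization $\partial e(a)=\{z^*(b):b\in\partial z(a)\}$ derived in Section \ref{sec:equivalence} upgrades the duality $\rho q=e(\rho)+e^*(q)$ to $\rho p=z(\rho)+z^*(p)$ almost everywhere. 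On the support of $\rho$ we have $\rho\in\partial z^*(p)$, so the Sobolev chain rule gives $\nabla q=\rho\nabla p$ almost everywhere on $\{\rho>0\}$, hence $\tfrac{\rho_i}{\rho}\nabla q=\rho_i\nabla p$ there. Plugging these identities into the weak forms (\ref{eq:weak_1})--(\ref{eq:weak_2}) (with $\gamma=0$, since the viscosity has already been sent to zero) produces the weak formulation of the first two equations of (\ref{eq:system}); the nutrient equation (\ref{eq:weak_3}) is unchanged.

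The main technical hurdle is handling the degenerate set $\{\rho=0\}$, where the chain-rule identity $\nabla q=\rho\nabla p$ and the very definition of $p$ are delicate. Here I would reuse the argument around (\ref{eq:delta_rho}) in the proof of Theorem \ref{thm:main}: the property $\limsup_{a\to 0^+}e(a)/a=0$ combined with the duality relation forces $q=0$ wherever $\rho=0$, so $|\nabla q|$ puts no mass on $\{\rho=0\}$ and any convenient extension of $p$ off $\spt\rho$ contributes nothing to the fluxes $\rho_i\nabla p$. This closes the equivalence between the $q$- and $p$-formulations and yields the desired weak solution of (\ref{eq:system}).
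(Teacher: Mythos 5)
Your first two paragraphs reproduce the paper's argument precisely: construct a solution to the viscous system (\ref{eq:viscous_system_q}) for $\gamma_k = 1/k$ with the fixed energy $e$, then pass to the limit via Theorem \ref{thm:main}, invoking scenario~(\ref{sc:b}) because (F3) is assumed. The paper's proof is literally the one-line pointer ``See Corollary~\ref{cor:existence_strict}'', whose content is exactly this.

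Your third and fourth paragraphs go beyond what the corollary is actually asking. The reference to system~(\ref{eq:system}) in its statement is a typo for~(\ref{eq:system_q}): the output tuple is $(\rho_1,\rho_2,q,n)$ with $q\in\cY(e^*)$, which is the $q$-formulation, and the paper defers the $q\mapsto p$ conversion to the final proposition of Section~\ref{sec:main}, which is then combined with Corollaries~\ref{cor:existence_strict} and~\ref{cor:existence_G4} to prove Theorems~\ref{thm:existence_strict}--\ref{thm:incompressible_limit}. Your conversion sketch is essentially a compressed version of that proposition, and the ideas are correct in outline, but two points deserve care. First, that proposition additionally assumes $z(\rho_1^0+\rho_2^0)\in L^1(\RR^d)$, which the corollary does not supply, and which is needed to bound $\int_D\rho|p|\leq\int_D|z(\rho)|+q$ and conclude $p\in L^1_{\loc}(\rho)$. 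Second, the chain-rule step $\nabla q=\rho\nabla p$ cannot be applied directly when $(z^*)^{-1}$ fails to be Lipschitz near $q=0$; the paper's argument tests against fields of the form $\eta_\delta(q)\,\vp$ with $\eta_\delta$ vanishing on $\{q\leq\delta\}$, uses that $\frac{1}{\rho}$ is bounded on $\{q\geq\delta\}$, and sends $\delta\to 0$ using that $|\nabla q|$ gives no mass to $\{\rho=0\}$. Noting that $|\nabla q|$ vanishes on $\{\rho=0\}$ is the right idea but is not by itself a rigorous definition of $\rho_i\nabla p$ as an $L^2$ flux; the truncation is what makes it precise.
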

\begin{proof}
See Corollary \ref{cor:existence_strict}.
\end{proof}

\begin{cor}\label{cor:incompressible_limit}
Let $F_{i,j}$ be source terms satisfying (F1-F3).   Given initial data $\rho_1^0, \rho_2^0\in L^1(\RR^d)\cap L^{\infty}(\RR^d), n^0\in L^2(\RR^d)$ such that $\rho_1^0+\rho_2^0\leq 1$ almost everywhere, let $(\rho_{1,m}, \rho_{2,m}, q_m, n_m)\in \cX(e)\times \cX(e)\times \cY(e^*)\times L^2_{\loc}([0,\infty);H^1(\RR^d))$ be weak solutions of the system (\ref{eq:system_q}) with the energy $e_m(a)=\frac{1}{m}a^m$.  As $m\to\infty$, any weak limit point of the sequence $(\rho_{1,m}, \rho_{2,m}, q_m, n_m)$ is a solution to the system (\ref{eq:system_q}) with the incompressible energy
\[
e_{\infty}(a)=
\begin{cases}
0 & \textup{if} \;\; a\in [0,1],\\
+\infty & \textup{otherwise.}
\end{cases}
\]
\end{cor}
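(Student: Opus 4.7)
The plan is to recognize this result as a direct application of Theorem \ref{thm:main} in the degenerate scenario (\ref{sc:b}). Since the source terms satisfy (F3) by hypothesis and the incompressible energy $z_\infty$ fails scenario (\ref{sc:a}) (indeed $\partial z_\infty(1) = [0,\infty)$ is not a singleton), we are exactly in the regime that scenario (\ref{sc:b}) was designed to handle. All that remains is bookkeeping: checking that the structural hypotheses of Theorem \ref{thm:main} are met by the sequence at hand.

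First, I would verify the pointwise convergence of the underlying energies. The energy $e_m(a) = \tfrac{1}{m}a^m$ comes from a porous media-type $z_m$ via (\ref{eq:e_def}) (modulo the standard linear renormalization from the table), and in both variables the convergence to the incompressible energy is elementary: for $a \in [0,1)$ one has $a^m \to 0$, so $z_m(a), e_m(a) \to 0 = z_\infty(a) = e_\infty(a)$; at $a = 1$ the values vanish in the limit; and for $a > 1$, $a^m \to \infty$ so both energies blow up, matching $+\infty$. Hence $z_m \to z_\infty$ and $e_m \to e_\infty$ pointwise everywhere on $\RR$, as required.

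Next I would verify the initial-data hypothesis and the remaining assumptions. Since $\rho_1^0 + \rho_2^0 \leq 1$ almost everywhere, $e_\infty(\rho_1^0 + \rho_2^0) = 0$ a.e., which sits trivially in $L^1(\RR^d)$. The sequence has $\gamma_m = 0$ for all $m$ (the solutions are to the inviscid system (\ref{eq:system_q})), so $\gamma_k \to 0$ is automatic. The remaining inclusions $\rho_{j,m} \in \cX(e_m)$, $q_m \in \cY(e_m^*)$, $n_m \in L^2_{\loc}([0,\infty);H^1(\RR^d))$ and the corresponding weak equations are built into the definition of weak solution to (\ref{eq:system_q}). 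With all hypotheses in place, Theorem \ref{thm:main} immediately yields that any weak limit point $(\rho_1, \rho_2, q, n)$ is a weak solution to (\ref{eq:system_q}) with the incompressible energy $e_\infty$.

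The one genuine thing to check is that weak limit points actually exist, i.e.\ that the a priori bounds of Proposition \ref{prop:estimates} are uniform in $m$. This follows because the constants appearing in (\ref{eq:l1_growth})-(\ref{eq:rho_p_extra_control}) only involve $\int e_m(\rho_1^0+\rho_2^0)\,dx$, $V$, and the uniform source-term bounds; and since $\rho_1^0+\rho_2^0 \leq 1$, one has $\int e_m(\rho_1^0+\rho_2^0)\,dx \leq \tfrac{1}{m}\int(\rho_1^0+\rho_2^0)\,dx$, which is uniformly bounded (indeed vanishing). The constant $\beta_m = \inf\{b : e_m^*(b) \geq 1\}$ likewise stays bounded as $m \to \infty$, so the parabolic estimates of Proposition \ref{prop:estimates} yield the compactness needed to extract the limit point. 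There is no further analytical difficulty beyond what Theorem \ref{thm:main} already handles.
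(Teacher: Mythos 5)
Your proposal is correct and follows essentially the same route as the paper: observe that $e_m \to e_\infty$ pointwise everywhere and then invoke Theorem \ref{thm:main} in scenario (\ref{sc:b}). The paper's proof is only three sentences long, delegating all of the hypothesis-checking (and the uniform bounds and compactness that you spell out explicitly) to Theorem \ref{thm:main} and citing Corollary \ref{cor:existence_strict} to supply the solutions for each finite $m$.
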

\begin{proof}
It is clear that $e_m$ converges pointwise everywhere to $e_{\infty}$. We can use Corollary \ref{cor:existence_strict} to construct weak solutions of (\ref{eq:system_q}) for each $m>0$.  We can then use Theorem \ref{thm:main} to pass to the limit $m\to\infty$.
\end{proof} 

At last, we will show that weak solutions to (\ref{eq:system_q}) can be easily converted into weak solutions to (\ref{eq:system}).
\begin{prop}
Let $z$ be an energy satisfying (z1-z3) and define $e$ by formula (\ref{eq:e_def}).  Suppose that $\rho_1^0, \rho_2^0\in L^1(\RR^d)\cap L^{\infty}(\RR^d), n^0\in L^2(\RR^d)$ is initial data such that $e(\rho_1^0+\rho_2^0), z(\rho_1^0+\rho_2^0)\in L^1(\RR^d)$.  If $(\rho_1, \rho_2, q, n)\in \cX(e)\times \cX(e)\times \cY(e^*)\times L^2_{\loc}([0,\infty);H^1(\RR^d))$ is a weak solution to the system (\ref{eq:system_q}) and we set $p=(z^*)^{-1}(q)$, then $(\rho_1, \rho_2, p, n)\in \cX(e)\times \cX(e)\times L^{\frac{2d+4}{d+4}}_{\loc}([0,\infty);L^1_{\loc}(\rho)) \cap L^2_{\loc}([0,\infty);\dot{H}^{1}(\rho))\times L^2_{\loc}([0,\infty);H^1(\RR^d))$ is a weak solution of (\ref{eq:system}).
\end{prop}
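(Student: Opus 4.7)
The plan is to verify that the substitution $p = (z^*)^{-1}(q)$ converts the weak formulation of (\ref{eq:system_q}) into the weak formulation of (\ref{eq:system}) term by term. Since all the analytic infrastructure---the relation between the pairs $(e,e^*)$ and $(z,z^*)$, the chain rule characterization of $\partial e$, and the \textit{a priori} estimates---has already been established in Sections \ref{sec:equivalence} and \ref{sec:estimates}, this reduces to checking the appropriate identifications in a bookkeeping fashion.

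The first step would be to see that $p$ is well-defined almost everywhere. On $\{q > 0\}$ the first lemma of Section \ref{sec:equivalence} gives $(z^*)^{-1}$ well-defined and Lipschitz. On $\{q = 0\}$, either $(z^*)^{-1}$ extends unambiguously, or the second lemma forces $\partial z^*(p) = \{0\}$ wherever $z^*(p) = 0$; combined with the inclusion $\rho \in \partial z^*(p)$ extracted below, this means $\rho = 0$ on the problematic set, so the ambiguity of $p$ there is irrelevant for any term of the form $\rho_i \cdot (\text{\textperiodcentered})$. Next I would extract the duality relation $\rho p = z(\rho) + z^*(p)$ from $\rho q = e(\rho) + e^*(q)$: the latter says $q \in \partial e(\rho)$, and the second lemma of Section \ref{sec:equivalence} characterizes $\partial e(\rho) = \{z^*(b) : b \in \partial z(\rho)\}$, so by construction $p \in \partial z(\rho)$, which is precisely the desired duality.

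The main identity to establish is the weak chain rule
\[
\nabla q = \rho\, \nabla p,
\]
which follows from the standard chain rule for compositions of Sobolev functions with monotone (locally Lipschitz on $\{q > 0\}$) maps applied to $q = z^*(p)$, via smooth approximation of $z^*$. This identity is also what gives meaning to the notation $p \in \dot H^{1}(\rho)$: the weighted gradient $\rho\, \nabla p$ is nothing but $\nabla q$, which is already in $L^2_{\loc}$. Once we have this identity in hand, the convective flux transforms cleanly: $\frac{\rho_i}{\rho}\nabla q = \rho_i \nabla p$ on $\{\rho > 0\}$, while both sides vanish on $\{\rho = 0\}$ since $0 \le \rho_i \le \rho$. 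For the $L^{(2d+4)/(d+4)}_{\loc}([0,\infty); L^1_{\loc}(\rho))$ bound on $p$, I would use the pointwise identity $\rho p = z(\rho) + q$, from which $\rho|p| \le |z(\rho)| + q$; the two terms on the right are controlled, respectively, by the $L^{\infty}_{\loc}([0,\infty); L^1(\RR^d)\cap L^{\infty}(\RR^d))$ bounds on $\rho$ (which keep $z(\rho)$ bounded on the essential range of $\rho$) and by estimate (\ref{eq:rho_p_extra_control}) applied to $q$.

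Plugging these identifications into the weak equations (\ref{eq:weak_1})--(\ref{eq:weak_2}) of system (\ref{eq:system_q}) with $\gamma = 0$ yields exactly the weak form of (\ref{eq:system}); the nutrient equation (\ref{eq:weak_3}) is unchanged, and the source terms match trivially by $F_{i,j}(p,n) = F_{i,j}((z^*)^{-1}(q), n)$. The main obstacle is the rigorous justification of the chain rule $\nabla q = \rho\, \nabla p$ at degenerate points (where either $\rho = 0$ or $z^*$ has a flat region), but both pathologies are contained in sets of zero measure with respect to the weighted measure $\rho\, dt\, dx$, so they do not affect the final equation.
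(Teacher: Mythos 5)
Your plan is structurally the same as the paper's: transfer the duality relation, establish the chain rule $\nabla q = \rho\,\nabla p$, identify the fluxes, and read off the weak equations. The duality transfer, the identification $\rho|p|\le |z(\rho)|+q$ for the $L^1_{\loc}(\rho)$ bound, and the observation that $\rho\,\nabla p = \nabla q$ is what gives meaning to $p\in\dot H^1(\rho)$ are all correct and match the paper.

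The gap is in the last paragraph. You correctly diagnose the obstacle (the chain rule degenerates where $q=0$ or where $(z^*)^{-1}$ loses its Lipschitz bound), but the proposed fix --- ``both pathologies are contained in sets of zero measure with respect to the weighted measure $\rho\,dt\,dx$'' --- is not enough to close it. The issue is not that the bad set has small $\rho$-measure; it is that $\nabla p$ has no a priori distributional meaning there. In the degenerate case the paper isolates, $p$ may fail to be locally integrable in Lebesgue measure (e.g.\ if $\sup\{p: z^*(p)=0\}=-\infty$, the extension of $p$ to $\{q=0\}$ is $-\infty$), so one cannot write down $\int p\,\nabla\cdot\vp$ for arbitrary smooth $\vp$, and the ``standard chain rule applied to $q=z^*(p)$'' is circular since it presupposes $p$ has Sobolev regularity. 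The paper resolves this by introducing the truncations $q_\delta=\max(q,\delta)$, $p_\delta=(z^*)^{-1}(q_\delta)$, for which the chain rule is legitimate and gives $\nabla p_\delta=\frac{\chi_\delta(q)}{\rho}\nabla q$, and by only ever testing $p$ against fields of the form $\eta_\delta(q)\psi$ supported away from the degenerate set; it then shows that $g\,\nabla p = \frac{g}{\rho}\nabla q$ in $L^2_{\loc}$ for any $0\le g\le\rho$, uniformly in $\delta$, which is exactly the sense in which $\rho_i\nabla p$ appears in the weak form of the target system. You would need to supply this localization step (or an equivalent one) for the proof to be complete; the easy sub-case where $\sup\partial e^*(0)>0$, in which $(z^*)^{-1}$ is globally Lipschitz, can then be handled directly as the paper does first.
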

\begin{proof}
The duality relation $\rho q=e(\rho)+e^*(q)$ is equivalent to $p\rho=z(\rho)+z^*(p)=z(\rho)+q$. 
Given a compact subset $D\subset Q_{\infty}$ we have
\[
\int_D \rho|p|\leq \int_D |z(\rho)|+q
\]
Thus, $p\in L^1_{\loc}(\rho)$.

If $\sup\partial e^*(0)>0$, then $(z^*)^{-1}$ is uniformly Lipschitz on all of $[0,\infty)$ and $\rho$ is bounded away from zero on $q>0$.  By the duality relation and the chain rule for Sobolev functions, we have $\nabla p=\frac{1}{\rho}\nabla q$ and $\nabla p\in L^2_{\loc}([0,\infty);L^2(\RR^d))$.  In this case, it is now clear that $(\rho_1, \rho_2, p, n)$ is a weak solution to (\ref{eq:system}).

Otherwise, we are in the case where $q=0$ implies that $\rho=0$ and we cannot extend $(z^*)^{-1}$ to be uniformly Lipschitz on $[0,\infty)$.
Fix some $\delta>0$ and  let $\eta_{\delta}:[0,\infty)\to \RR$ be a smooth increasing function such that $\eta_{\delta}(a)=0$ if $a\leq \delta$ and $\eta_{\delta}(a)=1$ if $a\geq 2\delta$.
 Since $\limsup_{a\to 0^+}\frac{e(a)}{a}=0$, it follows that $\frac{1}{\rho}$ is bounded on $q\geq \delta$.  Given any test function $\vp\in L^{\infty}_c([0,\infty);W^{1,\infty}_c(\RR^d))$ we see that
\[
\int_{Q_{\infty}} p\nabla\cdot (\vp\eta_{\delta}(q))=\int_{Q_{\infty}} p\rho \frac{\eta_{\delta}(q)}{\rho}\nabla\cdot \vp+p\rho\frac{\eta_{\delta}'(q)}{\rho}\nabla q\cdot \vp
\]
Since $p$ must be bounded on the support of $\eta_{\delta}'(q)$, it follows that the above integral is well defined.  Define
 $q_{\delta}:=\max(q,\delta)$, and $p_{\delta}:=(z^*)^{-1}(q_{\delta})$. 
 Since $(z^*)^{-1}$ is Lipschitz on $[\delta, \infty)$, the chain rule for Sobolev functions allows us to compute $\nabla p_{\delta}=\frac{\chi_{\delta}(q)}{\rho}\nabla q$ where $\chi_{\delta}$ is the characteristic function $[\delta, \infty)$.  Furthermore, on the support of $\eta_{\delta}, \eta_{\delta}'$ it follows that $p=p_{\delta}$.  Hence,
 \begin{equation}\label{eq:test_gradient}
 \int_{Q_{\infty}} p\nabla\cdot (\vp\eta_{\delta}(q))=\int_{Q_{\infty}} p_{\delta}\rho \frac{\eta_{\delta}(q)}{\rho}\nabla\cdot \vp+p_{\delta}\rho\frac{\eta_{\delta}'(q)}{\rho}\nabla q\cdot \vp=\int_{Q_{\infty}}\frac{\eta_{\delta}(q)}{\rho}\nabla q\cdot  \vp
 \end{equation}
 Thus, $\nabla p$ is well defined as a distribution against any test vector field of the form $\eta_{\delta}(q)\psi$ where $\psi\in L^{\infty}_c([0,\infty);W^{1,\infty}_c(\RR^d))$ and when tested against these fields we have $\nabla p=\frac{1}{\rho}\nabla q$.  Examining equation (\ref{eq:test_gradient}), we see that we can in fact relax $\vp$ to belong to $L^2_c([0,\infty);L^2(\RR^d))$.  
 
 It is now clear that if $g$ is some function such that $0\leq g\leq \rho$ then we have $g\nabla p=\frac{g}{\rho}\nabla q$ on the support of $\eta_{\delta}(q)$. Since $\eta_{\delta}(q)\frac{g}{\rho}\nabla q\in L^2_{\loc}([0,\infty);L^2(\RR^d))$ independently of $\delta$, it follows that $g\nabla p\in L^2_{\loc}([0,\infty);L^2(\RR^d))$.  Thus, we can conclude that
 \[
 \frac{\rho_i}{\rho}\nabla q\cdot \vp=\rho_i\nabla p\cdot \vp
 \]
 where $\vp$ is any element of $L^2_c([0,\infty);L^2(\RR^d))$.  It now follows that $(\rho_1, \rho_2, p, n)$ is a solution to the system (\ref{eq:system}).
The regularity of $p$ can then be improved by arguing as in Propositions \ref{prop:edr} and \ref{prop:estimates}.
\end{proof}

The proofs of Theorems \ref{thm:existence_strict}  \ref{thm:existence_degenerate}, and \ref{thm:incompressible_limit} are now just corollaries of the previous proposition, Theorem \ref{thm:main}, and  Corollaries \ref{cor:existence_strict} and \ref{cor:existence_G4}. 

\appendix
\section{Some Convergence results for sequences of convex functions}
\begin{lemma}\label{lem:convex_convergence}
Let $f:\RR\to\RR\cup\{+\infty\}$ be a proper, lower semicontinuous, convex function such that $f^{-1}(\RR)$ is not a singleton.
If $f_k:\RR\to\RR\cup\{+\infty\}$ is a sequence of proper, lower semicontinuous convex functions such that $f_k$ converges pointwise everywhere to $f$ then the following properties hold:
\begin{enumerate}

    \item If $f$ is differentiable at a point $a\in \RR$, then \[
\limsup_{k\to\infty} \max\Big(| \sup \partial f_k(a) -f'(a)|\;, \; |\inf \partial f_k(a)-f'(a)|\Big)=0.
\]
\item The convergence of $f_k$ to $f$ is uniform on compact subsets of the interior of $f^{-1}(\RR)$.
\item $f_k^*$ converges pointwise everywhere to $f^*$ except possibly at the two exceptional values $b^+_{\infty}=\sup\{b\in\RR: z^*(b)<\infty\}, b^-_{\infty}=\inf\{b\in\RR: z^*(b)<\infty\}$.
\item If $f^*$ is differentiable at a point $b\in \RR$, then \[
\limsup_{k\to\infty} \max\Big(| \sup \partial f_k^*(b) -f^{*\, \prime}(b)|\;, \; |\inf \partial f_k^*(b)-f^{*\,\prime}(b)|\Big)=0,
\]
and the convergence of $f_k^*$ to $f^*$ is uniform on compact subsets of the interior of $(f^*)^{-1}(\RR)$.
\end{enumerate}
\end{lemma}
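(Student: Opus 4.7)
The plan is to prove the four properties in the order stated: (1) by a standard subdifferential passage to the limit, (2) by upgrading pointwise to uniform convergence using equi-Lipschitz bounds obtained via (1), (3) as the main step, and (4) by applying (1)--(2) to the dual sequence $f_k^*$ once pointwise convergence is available from (3).

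For (1), differentiability forces $a$ to lie in the interior of $f^{-1}(\RR)$, and pointwise convergence then places $a$ in the interior of $f_k^{-1}(\RR)$ for large $k$, so $\partial f_k(a)$ is nonempty. Given $b_k^+\in\partial f_k(a)$, the convexity inequality $f_k(a+h)-f_k(a)\geq b_k^+ h$ with $h>0$ combined with $f_k(a+h)\to f(a+h)$ and $f_k(a)\to f(a)$ gives
\[
\limsup_{k\to\infty} b_k^+\;\leq\; \frac{f(a+h)-f(a)}{h};
\]
letting $h\to 0^+$ yields $\limsup b_k^+\leq f'(a)$. The symmetric argument with $h<0$ gives $\liminf b_k^-\geq f'(a)$, and since $b_k^-\leq b_k^+$ both quantities converge to $f'(a)$. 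For (2), on any compact $K$ in the interior of $f^{-1}(\RR)$, I sandwich $K\subset[a_-,a_+]$ with $f(a_\pm)$ finite. Convexity at the endpoints $a_\pm$ combined with $f_k(a_\pm)\to f(a_\pm)$ gives a uniform upper bound $f_k\leq\max(f_k(a_-),f_k(a_+))$ on $[a_-,a_+]$, and the subgradient estimates at the endpoints produce uniform Lipschitz bounds for $f_k$ on any slightly smaller interval containing $K$. Equi-Lipschitz plus pointwise convergence then yields uniform convergence on $K$.

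The main obstacle is (3). The lower bound $\liminf f_k^*(b)\geq f^*(b)$ is immediate from $f_k^*(b)\geq ab-f_k(a)\to ab-f(a)$ for each $a\in f^{-1}(\RR)$, followed by a supremum. The difficulty is the matching upper bound: the sup in $f_k^*(b)=\sup_a(ab-f_k(a))$ is taken over all of $\RR$, and could a priori be attained by $a_k\to\pm\infty$, where $f_k$ is not controlled by $f$. I will rule this out by a coercivity argument. Choose $c_\pm$ with $b_\infty^-<c_-<b<c_+<b_\infty^+$; since $c_\pm$ lie strictly inside the domain of $f^*$, there exist pairs of points in $f^{-1}(\RR)$ whose secant slopes exceed $c_+$ on the right of $b$ and are smaller than $c_-$ on the left. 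Pointwise convergence at these fixed points gives, for all sufficiently large $k$, affine minorants $f_k(a)\geq c_+ a-C_+$ for $a\geq A$ and $f_k(a)\geq c_- a-C_-$ for $a\leq -A$. Because $c_-<b<c_+$, these minorants force $ab-f_k(a)\to -\infty$ uniformly in $k$ as $|a|\to\infty$, so the supremum defining $f_k^*(b)$ is attained on some fixed compact interval $[-A',A']$ for all large $k$. On this compact subset of the interior of $f^{-1}(\RR)$, (2) supplies uniform convergence of $f_k$ to $f$, and therefore $f_k^*(b)\to f^*(b)$. The argument breaks down at $b\in\{b_\infty^-, b_\infty^+\}$, which is precisely why those two values are excluded.

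For (4), differentiability of $f^*$ at $b$ forces $b$ into the interior of $(f^*)^{-1}(\RR)$, where (3) delivers pointwise convergence of $f_k^*$ to $f^*$. Since each $f_k^*$ is proper, lower semicontinuous and convex, the arguments of (1) and (2) apply verbatim to the dual sequence, yielding both the subdifferential convergence and the uniform convergence on compact subsets of the interior of $(f^*)^{-1}(\RR)$.
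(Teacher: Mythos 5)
Your proofs of (1), (2), and (4) are sound and close in spirit to the paper's; the trouble is in (3), where your coercivity argument has a genuine gap.

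You claim that since $c_+$ lies strictly inside the domain of $f^*$, there exist two points of $f^{-1}(\RR)$ whose secant slope exceeds $c_+$, and that pointwise convergence at these two points then yields an affine minorant $f_k(a)\geq c_+a-C_+$ (with $C_+$ uniform in $k$) valid for all large $a$. This is false in general. Take $f(a)=0$ on $[0,1]$ and $f(a)=+\infty$ otherwise, so that $f^*(b)=\max(b,0)$ is finite everywhere and $b^\pm_\infty=\pm\infty$. For any $b>0$ and any $c_+>b$, every secant slope of $f$ inside $f^{-1}(\RR)=[0,1]$ equals $0<c_+$, so the two required points do not exist. The finiteness of $f^*(c)$ for $c>c_+$ reflects only the boundedness of $f^{-1}(\RR)$, not any growth of the slopes of $f$; you cannot manufacture the uniform affine minorant from pointwise convergence at points of $f^{-1}(\RR)$. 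Consequently, the reduction of $\sup_a(ab-f_k(a))$ to a fixed compact interval $[-A',A']$ does not go through as written, and even where a compact reduction can be salvaged by choosing points outside $f^{-1}(\RR)$, the resulting interval need not lie in the interior of $f^{-1}(\RR)$, so part (2) cannot be invoked to give uniform convergence there: $f_k$ tends to $+\infty$ pointwise but not uniformly outside $f^{-1}(\RR)$.

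The paper takes a different and more careful route that avoids this issue entirely. It identifies $a_0=\inf\{a:b\in\partial f(a)\}$ and $a_1=\sup\{a:b\in\partial f(a)\}$ (both finite precisely because $b$ is strictly between $b^-_\infty$ and $b^+_\infty$), and uses the secant-slope inequalities at $a_0-\delta$ and $a_1+\delta$ to show that the supremum defining $f_k^*(b)$ is eventually attained on $[a_0-\delta,a_1+\delta]$. Crucially, it then avoids any appeal to uniform convergence of $f_k$ on that interval: instead it bounds $f_k$ from below by the affine function determined by a subgradient $b_k\in\partial f_k(a')$ at an interior point $a'$ of differentiability of $f$ (or by a sequence of such points when $a_0=a_1$), and uses part (1) to pass to the limit in $b_k$ and $f_k(a')$. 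That affine bound controls $\sup_{a\in[a_0-\delta,a_1+\delta]}(ab-f_k(a))$ directly, even when $f_k$ is finite outside $f^{-1}(\RR)$, which is the case your argument cannot handle. If you want to rescue your approach, you would need to supply this extra affine-minorant step (or an equivalent device) rather than appealing to part (2) on the reduced interval.
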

\begin{proof}
Let $a$ be a point of differentiability for $f$. Since $f'(a)$ exists and is finite,  there exists $\delta_0>0$ such that $f$ is finite on $[a-\delta_0, a+\delta_0]$. Fix some $\delta\in (0,\delta_0)$. The convergence of $f_k$ to $f$ implies that there must exist some $N, B$ sufficiently large such that $|f_k(a)|, |f_k(a-\delta)|, |f_k(a+\delta)|<B$ for all $k>N$. Now we can use convexity to bound
\[
\frac{f_k(a)-f_k(a-\delta)}{\delta}\leq \inf \partial f_k(a)\leq \sup \partial f_k(a)\leq \frac{f_k(a+\delta)-f_k(a)}{\delta}.
\]
Thus, 
\[
\limsup_{k\to\infty} \max\Big(| \sup \partial f_k(a) -f'(a)|\;, \; |\inf \partial f_k(a)-f'(a)|\Big)\leq |\frac{f(a)-f(a-\delta)}{\delta} -f'(a)|+|\frac{f(a+\delta)-f(a)}{\delta} -f'(a)|.
\]
Sending $\delta\to 0$ and using the fact that $f$ is differentiable at $a$, we get the desired result.

Now suppose that $[a_0, a_1]$ is an interval in the interior of $f^{-1}(\RR)$ and choose some $\delta>0$ such that $[a_0-\delta, a_1+\delta]$ is still in the interior of  $f^{-1}(\RR)$ and $f$ is differentiable at $a_0-\delta, a_1+\delta$.  Given any $a\in [a_0, a_1]$, we have 
\[
f'(a_0-\delta) \leq \inf \partial f(a)\leq \sup \partial f(a)\leq f'(a_1+\delta).
\]
It then follows from our above work that $\partial f_k(a)$ is uniformly bounded on $[a_0, a_1]$ for all $k$ sufficiently large.  Hence, $f_k$ is uniformly equicontinuous on $[a_0,a_1]$ and thus converges uniformly to $f$.

Now we consider $f^*$.  Fix some $b\in \RR$. If $f^*(b)=+\infty$, then for each $j\in \ZZ_+$ there exists $a_j\in \RR$ such that $a_jb-f(a_j)>j$.  We can then compute
\[
\liminf_{k\to\infty} f_k^*(b)\geq \liminf_{k\to\infty} a_jb-f_k(a_j)>j.
\]
Thus, $\liminf_{k\to\infty} f_k^*(b)=+\infty$.

If $b_{\infty}^-=b_{\infty}^+$ then we are already done.  Otherwise, given $b\in (b_{\infty}^-, b_{\infty}^+)$, let $a_0, a_1$ be the infimum and supremum of the set $\{a\in\RR: b\in \partial f(a)\}$ respectively. Since $b\in (b_{\infty}^-, b_{\infty}^+)$, $a_0, a_1$ must exist and are finite. Furthermore, for any $a\in [a_0, a_1]$ we have $f^*(b)=ab-f(a)$. 

If we fix some $\delta>0$, it follows that
$\frac{f(a_0)-f(a_0-\delta)}{\delta} < b< \frac{f(a_1+\delta)-f(a_1)}{\delta}$,
and hence for all $k$ sufficiently large 
$\frac{f_k(a_0)-f_k(a_0-\delta)}{\delta} < b< \frac{f_k(a_1+\delta)-f_k(a_1)}{\delta}$
Hence, for all $k$ sufficiently large
\[
f_k^*(b)= \sup_{a\in [a_0-\delta,a_1+\delta]} ab-f_k(a).
\]
It is now clear that $\liminf_{k\to\infty} f_k^*(b)\geq f^*(b)$.  

If $a_0<a_1$ then $f$ is differentiable at all $a\in (a_0, a_1)$ and $f'(a)=b$.  Therefore if we fix some $a'\in (a_0, a_1)$ and for each $k$ choose some $b_k\in \partial f_k(a')$ then
\[
f_k^*(b)\leq \sup_{a\in [a_0-\delta, a_1+\delta]} ab-f_k(a')-b_k(a-a')\leq \max\Big((a_0-\delta)b-b_k(a_0-\delta-a')\, , \, (a_1+\delta)b-b_k(a_1+\delta-a') \Big)-f_k(a')
\]
Since $b_k\to b$, we get
\[
\limsup_{k\to\infty} f_k^*(b)\leq a'b-f(a')=f^*(b).
\]

Otherwise if $a_0=a_1$, then since $f^{-1}(\RR)$ is not a singleton, we can find a sequence $a_j$ converging to $a_0$ such that $f$ is differentiable at $a_j$ for all $j$.  For each $j,k$ choose some $b_{j,k}\in \partial f_k(a_j)$ and note that $\lim_{k\to\infty} b_{j,k}=f'(a_j)$. Thus, we can compute 
\[
\limsup_{k\to\infty}f_k^*(b)\leq \limsup_{k\to\infty}\sup_{a\in [a_0-\delta, a_0+\delta]} ab-f_k(a_j)-b_{j,k}(a-a_j)
\]
\[
\leq a_0b-f(a_j)-f'(a_j)(a_0-a_j)+\delta(|b|+|f'(a_j)|)
\]
Sending $\delta\to 0$ and then $j\to\infty$, it follows that
\[
\limsup_{k\to\infty} f^*_k(b)\leq a_0b-f(a_0)=f^*(b)
\]
as desired.

We have now shown that $\lim_{k\to\infty} f^*_k(b)=f^*(b)$ except possibly at $b=b_{\infty}^+, b_{\infty}^-$. Since $b_{\infty}^+, b_{\infty}^-$ does not lie in the interior of $(f^*)^{-1}(\RR)$, we can use the same argument we used to establish properties (1) and (2) to establish property (4).
\end{proof}

\begin{lemma}\label{lem:convex_convergence_2}
Let $z:\RR\to\RR\cup\{+\infty\}$ be an energy satisfying (z1-z3) and let $z_k:\RR\to\RR\cup\{+\infty\}$ be a sequence of energies satisfying (z1-z3)  such that $z_k$ converges pointwise everywhere to $z$. If we set $b_{\infty}=\inf\{b\in \RR: z^*(b)=+\infty\}$
 then $(z_k^*)^{-1}$ converges uniformly to $(z^*)^{-1}$ on compact subsets of $\big(0,z^*(b_{\infty})\big)$.  
\end{lemma}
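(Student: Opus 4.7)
My approach is to transfer the uniform convergence of $z_k^*$ to $z^*$, which is available on compact subsets of the interior of $(z^*)^{-1}(\RR)$, into uniform convergence of the inverse functions, using a uniform strict monotonicity (i.e.\ a uniform lower bound on the slopes) of $z^*$ on a buffer neighborhood of the target set.

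I begin by reducing to the case where the compact subset of $\bigl(0,z^*(b_\infty)\bigr)$ is a closed interval $[\alpha,\beta]$ with $0<\alpha\le\beta<z^*(b_\infty)$. Set $p_\alpha=(z^*)^{-1}(\alpha)$ and $p_\beta=(z^*)^{-1}(\beta)$. Both are finite by hypothesis on $z^*(b_\infty)$, and $p_\alpha\le p_\beta<b_\infty$. I will choose a buffer $\epsilon_0>0$ small enough that the enlarged interval $I:=[p_\alpha-\epsilon_0,p_\beta+\epsilon_0]$ satisfies (i) $p_\beta+\epsilon_0<b_\infty$, so $I$ lies in the interior of the effective domain of $z^*$, and (ii) $z^*(p_\alpha-\epsilon_0)>0$, which is possible because $z^*(p_\alpha)=\alpha>0$ and $z^*$ is strictly increasing at $p_\alpha$ (by the first lemma of Section \ref{sec:equivalence}, since $z^*(p_\alpha)\in(0,\infty)$, so $\inf\partial z^*(p_\alpha)>0$).

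Next I invoke Lemma \ref{lem:convex_convergence}: part (3) gives pointwise convergence of $z_k^*$ to $z^*$ everywhere except possibly at $b_\infty^\pm$, and the argument behind part (2) (equi-Lipschitz bounds from pointwise convergence at two points strictly inside the domain) promotes this to uniform convergence of $z_k^*$ on the compact set $I$, which lies inside the interior $(-\infty,b_\infty)$. Simultaneously I extract from (ii) a uniform strict monotonicity constant $c:=\inf\partial z^*(p_\alpha-\epsilon_0)>0$; by monotonicity of subdifferentials of the convex function $z^*$, for every $p\in[p_\alpha,p_\beta]$ and every $\epsilon\in(0,\epsilon_0]$,
\[
z^*(p+\epsilon)-z^*(p)\ \ge\ c\epsilon, \qquad z^*(p)-z^*(p-\epsilon)\ \ge\ c\epsilon.
\]

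Finally, given $\epsilon\in(0,\epsilon_0)$ I choose $k$ large enough that $\sup_{r\in I}|z_k^*(r)-z^*(r)|<c\epsilon/2$. For any $q\in[\alpha,\beta]$, setting $p=(z^*)^{-1}(q)\in[p_\alpha,p_\beta]$, the two inequalities above combined with uniform convergence yield $z_k^*(p-\epsilon)<q<z_k^*(p+\epsilon)$. Continuity of $z_k^*$ on the interior of its domain plus the intermediate value theorem produce some $p_k\in(p-\epsilon,p+\epsilon)$ with $z_k^*(p_k)=q$; strict monotonicity of $z_k^*$ at $p_k$ (again by the first lemma of Section \ref{sec:equivalence}, since $z_k^*(p_k)=q\in(0,\infty)$) identifies this as $(z_k^*)^{-1}(q)$, so $|(z_k^*)^{-1}(q)-(z^*)^{-1}(q)|<\epsilon$ uniformly in $q\in K$, as desired.

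I expect the only mild subtlety to be the selection of the buffer $\epsilon_0$ guaranteeing both inclusion in the interior of the domain and positivity of $z^*$ on the left endpoint of $I$: the latter is what prevents the slopes of $z^*$ from degenerating to zero and supplies the uniform monotonicity constant $c$ that makes the inversion work. Everything else is a routine application of pointwise/uniform convergence for convex functions from Lemma \ref{lem:convex_convergence}.
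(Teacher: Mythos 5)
Your proof is correct and follows essentially the same strategy as the paper's: both derive a uniform positive lower bound on the slope of $z^*$ (equivalently, a uniform Lipschitz bound on the inverse) on a compact neighborhood of the relevant level set, and then combine it with the locally uniform convergence of $z_k^*$ to $z^*$ supplied by Lemma~\ref{lem:convex_convergence}. The paper applies the Lipschitz constant of $(z_k^*)^{-1}$ directly to estimate $|(z^*)^{-1}(a)-(z^*_k)^{-1}(a)|\le L_\epsilon|z^*(\bar{b})-z_k^*(\bar{b})|$, while you obtain the same control via an intermediate-value sandwich $z_k^*(p-\epsilon)<q<z_k^*(p+\epsilon)$; this is a cosmetic reformulation rather than a different argument.
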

\begin{proof}
If $z^*(b_{\infty})=0$, then there is nothing to prove.  Otherwise, given $\epsilon\in (0, z^*(b_{\infty}))$ there must exist $b_{\epsilon/2}<b_{\epsilon}\in \RR$ such that $z^*(b_{\epsilon/2})=\epsilon/2$ and $z^*(b_{\epsilon})=\epsilon.$ It then follows that for all $b\geq b_{\epsilon}$ and $k$ sufficiently large
\[
\frac{\epsilon}{4(b_{\epsilon}-b_{\epsilon/2})}\leq \inf \partial z^*_k(b).
\]
 As a result, $(z_k^*)^{-1}$ is uniformly Lipschitz on $[\epsilon, z^*(b_{\infty}))$.   Choose some value $a\in [\epsilon, z^*(b_{\infty}))$ and let $\bar{b}=(z^*)^{-1}(a)$.   Let $a_k=z^*_k(\bar{b})$ and note that once $k$ is sufficiently large we must have $a\in z_k^*(\RR)$.  Thus,
\[
|(z^*)^{-1}(a)-(z^*_k)^{-1}(a)|=|\bar{b}-(z^*_k)^{-1}(a_k+a-a_k)|\leq L_{\epsilon}|a-a_k|=L_{\epsilon}|z^*(\bar{b})-z^*_k(\bar{b})|
\]
Now the uniform convergence of $z^*_k$ to $z^*$ on compact subsets of $(-\infty,b_{\infty})$ combined with the Lipschitz bound implies the uniform convergence of $(z_k^*)^{-1}$ to $(z^*)^{-1}$ on compact subsets of $(0,z^*(b_{\infty}))$.

\end{proof}

\begin{lemma}\label{lem:in_measure}
Suppose that $f_k:\RR\to \RR$ is a sequence of proper, lower semicontinuous, convex functions that converge pointwise everywhere to a function $f:\RR\to\RR$ that is also proper, lower semicontinuous, and convex with $a_0:=\inf\{a\in \RR: f(a)<\infty\}<\sup\{a\in \RR: f(a)<\infty\}=:a_1$.
 Let $u_k\in L^1_{\loc}(Q_{\infty})$ be a sequence of uniformly integrable functions such that  $u_k$ converges weakly in $L^1_{\loc}(Q_{\infty})$ to a limit $u\in L^1_{\loc}(Q_{\infty})$. Suppose in addition that the sequence $f_k(u_k)$ converges weakly in $L^1_{\loc}(Q_{\infty})$ to $f(u)\in L^1_{\loc}([0,\infty);L^1(\RR^d))$.  If there exists $v\in L^{\infty}_{\loc}(Q_{\infty})$ such that $v\in \partial f(u)$ and $f$ is strictly convex on the interior of $f^{-1}(\RR)$, then $u_k$ converges locally in measure to $u$.
\end{lemma}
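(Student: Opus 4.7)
The plan is to exploit the nonnegative Fenchel--Young gap
\[
\psi_k(u_k, v) := f_k(u_k) - u_k v + f_k^*(v) \geq 0,
\]
whose would-be limit $\psi(u, v) = f(u) - uv + f^*(v)$ vanishes a.e.\ because $v \in \partial f(u)$. I will show $\psi_k(u_k, v) \to 0$ in $L^1_{\loc}$, then combine this with the strict convexity of $f$ in a pointwise argument to extract $u_k \to u$ along subsequences a.e., hence in measure.

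For the $L^1_{\loc}$ limit, the assumed weak convergence $f_k(u_k) \rightharpoonup f(u)$ in $L^1_{\loc}$, the weak convergence $u_k v \rightharpoonup u v$ (immediate since $v \in L^\infty_{\loc}$), and the pointwise convergence $f_k^*(v) \to f^*(v)$ from Lemma \ref{lem:convex_convergence} (upgraded to $L^1_{\loc}$ by dominated convergence using the uniform convergence of $f_k^*$ on compact subsets of the interior of $(f^*)^{-1}(\RR)$, noting that $f^*(v) = uv - f(u)$ is locally integrable and $v$ locally bounded) combine to give $\psi_k(u_k, v) \rightharpoonup 0$ weakly in $L^1_{\loc}$. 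Nonnegativity of $\psi_k(u_k, v)$ then upgrades this to strong $L^1_{\loc}$ convergence. Pass to a subsequence so that $\psi_{k_j}(u_{k_j}, v) \to 0$ a.e. At a.e.\ $(t,x)$ (in particular where $u(t,x) \in (a_0, a_1)$), take any cluster value $u^* \in [-\infty, +\infty]$ of $u_{k_j}(t,x)$. If $u^*$ lies in the interior of $f^{-1}(\RR)$, the uniform convergence of $f_k$ on compact subsets of that interior (Lemma \ref{lem:convex_convergence}) gives $f_{k_j}(u_{k_j}) \to f(u^*)$, so passing to the limit in the gap yields $\psi(u^*, v) = 0$, i.e.\ $v \in \partial f(u^*)$; strict convexity of $f$ on $(a_0, a_1)$ then forces $u^* = u(t,x)$ (using that $v \in \partial f(u)$ and $v \in \partial f(u^*)$ both with $u$ in the interior would otherwise make $f$ affine on a subinterval, a contradiction that also handles the case $u^* \in \{a_0, a_1\}$). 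Otherwise $u^*$ is $\pm\infty$ or strictly separated from $u(t,x)$; pick $\bar a$ strictly between $u(t,x)$ and $u^*$ inside $(a_0, a_1)$, so that by strict convexity any $\bar v \in \partial f(\bar a)$ lies strictly on the far side of $v$ from $\partial f(u(t,x))$. By Lemma \ref{lem:convex_convergence}'s convergence of subgradients, any selection $\bar v_k \in \partial f_k(\bar a)$ is eventually separated from $v$ by a fixed gap $\delta > 0$ with the correct sign, and the affine minorant $f_k(a) \geq f_k(\bar a) + \bar v_k(a - \bar a)$ yields
\[
\psi_k(u_k, v) \geq \delta\,|u_k - \bar a| - C,
\]
for some $k$-uniform $C$, which forces $\psi_k(u_k, v) \to \infty$ along the cluster and contradicts $\psi_k(u_k, v) \to 0$. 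Hence every cluster value equals $u(t,x)$, the subsequence converges a.e., and the standard subsequence principle gives $u_k \to u$ locally in measure.

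The main obstacle will be the boundary/escape case in the pointwise step: a priori $u_{k_j}(t,x)$ could cluster at $\pm\infty$ or at $\{a_0, a_1\}$, and the bare strict-convexity uniqueness for $\partial f$ only applies inside the interior. The affine-minorant control, using the strictness of the jump in $\partial f$ together with Lemma \ref{lem:convex_convergence}'s convergence of subgradients to transfer this strictness from $f$ to $f_k$ uniformly, is what rules this out. A secondary technical point is ensuring that the range of $v$ is contained in a compact subset of the interior of $(f^*)^{-1}(\RR)$, so that the pointwise convergence $f_k^*(v) \to f^*(v)$ can be upgraded to $L^1_{\loc}$ convergence away from the exceptional boundary values $b_{\infty}^{\pm}$ of Lemma \ref{lem:convex_convergence}.
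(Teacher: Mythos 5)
Your proposal takes a genuinely different route from the paper, and while the overall scheme is reasonable, there are two concrete issues worth naming.

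The paper never touches $f_k^*$ at all. Its proof first controls the tails directly from the uniform $L^1_{\loc}$ bound on $f_k(u_k)$, establishing that $|\{u_k > a_1+\epsilon\}|$ and $|\{u_k < a_0-\epsilon\}|$ vanish; it then truncates $u_{k,\delta}=\max(a_0+\delta,\min(u_k,a_1-\delta))$, tests $f_k(u_k)$ against the affine support $f_k(u_{k,\delta})+v_{k,\delta}(u_k-u_{k,\delta})$ with $v_{k,\delta}\in\partial f_k(u_{k,\delta})$, replaces $f_k$ by $f$ on the truncated range via the local uniform convergence of Lemma \ref{lem:convex_convergence}, and finally lands on a single integral Bregman-divergence estimate $\int_D f(u_{k,\delta})-f(u)-v(u_{k,\delta}-u)$. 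Convergence in measure then follows from one quantitative strict-convexity step. By working with the Bregman divergence of $f$ applied to the \emph{truncated} sequence (note: $B_f(u_{k,\delta},u)$ is exactly $\psi(u_{k,\delta},v)$, the Fenchel--Young gap of the \emph{limit} energy), the paper stays entirely in a compact window $[a_0+\delta,a_1-\delta]$ and avoids the dual $f_k^*$ altogether.

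Your proposal instead drives the argument through $\psi_k(u_k,v)=f_k(u_k)-u_kv+f_k^*(v)$, the Fenchel--Young gap of $f_k$. You correctly note that $\psi_k\geq 0$ upgrades weak to strong $L^1_{\loc}$ convergence, and the subsequent pointwise cluster-point argument is plausible. But the ``secondary technical point'' you flag at the end is in fact the crux: to obtain $\psi_k(u_k,v)\rightharpoonup 0$ you need $f_k^*(v)\to f^*(v)$ in $L^1_{\loc}$, and Lemma \ref{lem:convex_convergence} only gives pointwise convergence of $f_k^*$ away from the boundary values $b_\infty^\pm$ of $(f^*)^{-1}(\RR)$ and local uniform convergence on compacts of the \emph{interior}. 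Nothing in the hypotheses prevents $v$ from attaining $b_\infty^+$ (or approaching it) on a set of positive measure, in which case $f_k^*(v)$ can diverge pointwise and the entire $L^1$ step collapses. You flag this but do not close it. In the concrete use in Step 3a of Theorem \ref{thm:main} the issue is moot (there $f^*=e$ and $b_\infty^\pm=\pm\infty$), but the lemma is stated and used as a self-contained convex-analysis fact, and your proof does not cover its full hypotheses.

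A secondary issue is the pointwise escape argument. Your construction picks $\bar a$ ``strictly between $u(t,x)$ and $u^*$ inside $(a_0,a_1)$'', which silently assumes $u(t,x)\in(a_0,a_1)$. The lemma does not restrict $u$ to the interior: if $u(t,x)=a_0$ on a positive-measure set and $u_{k_j}(t,x)$ clusters at some $u^*<a_0$ (or $-\infty$), there is no admissible $\bar a$ with the required sign configuration, and the affine-minorant lower bound does not deliver a contradiction with the right sign. The paper sidesteps all of this: the preliminary tail estimate kills the escape sets in measure before any pointwise reasoning is needed, and the truncation keeps the Bregman argument on a compact subinterval where the required uniform estimates hold.

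In short, the approach via $\psi_k(u_k,v)$ is a legitimate alternative idea and the nonnegativity-upgrades-to-strong-$L^1$ observation is nice, but as written there is a real gap in the $L^1_{\loc}$ convergence of $f_k^*(v)$ and an unaddressed boundary case in the escape argument. The paper's proof is more robust precisely because it never appeals to $f_k^*$ and keeps everything in a truncated, compact window.
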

\begin{proof}
Fix a compact set $D\subset Q_{\infty}$.
Fix $\epsilon>0$ and let $S_{k,\epsilon}=\{(t,x)\in D: u_k>a_1+\epsilon\}$. Choose some value $a\in (a_0, a_1)$.  Since $f(a)$ is finite and $f_k(a_1+\epsilon)$ must approach $\infty$ as $k\to\infty$, it follows that $f_k$ is increasing at $a_1+\epsilon$ for all $k$ sufficiently large.  Therefore, 
\[
\limsup_{k\to\infty} |S_{k,\epsilon}|f_k(a_1+\epsilon)\leq \limsup_{k\to\infty}\int_{S_{k,\epsilon}} f_k(u_k)\leq \limsup_{k\to\infty}\int_D |f_k(u_k)|<\infty,
\]
where in the last inequality we used the fact that the sequence $f_k(u_k)$ is uniformly bounded in $L^1_{\loc}(Q_{\infty})$. Of course the above inequality is only possible if $\limsup_{k\to\infty}|S_{k,\epsilon}|=0$. A similar argument shows that the measure of the sets $\{(t,x)\in D: u_k(t,x)<a_0-\epsilon\}$ also vanishes in the $k\to\infty$ limit.

Given some $\delta<(a_1-a_0)/2$,  define $u_{k,\delta}=\max(a_0+\delta, \min(u_k,a_1-\delta))$ and  $v_{k,\delta}=\inf \partial f_k(u_{k,\delta})$.  From the convergence properties of $u_k$ and $f_k(u_k)$  we have 
\[
\lim_{k\to\infty}\int_{D} f_k(u_k)-f(u)-v(u_k-u)=0.
\]
Therefore,
\[
0\geq \limsup_{k\to\infty} \int_{D} f_k(u_{k,\delta})+v_{k,\delta}(u_k-u_{k,\delta})-f(u)-v(u_k-u).
\]
For $\delta>0$ fixed, Lemma \ref{lem:convex_convergence} implies that $f_k(u_{k,\delta})-f(u_{k,\delta})$ converges uniformly to zero.  Hence, 
\[
0\geq \limsup_{k\to\infty} \int_{D} v_{k,\delta}(u_k-u_{k,\delta})+f(u_{k,\delta})-f(u)-v(u_k-u).
\]
For $\delta$ sufficiently small, either $v_{k,\delta}(u_k-u_{k,\delta})$ is positive or $v_{k,\delta}$ is bounded.  Either way, from the uniform integrability of $u_k$ and our work in the first paragraph, it follows that  
\[
\lim_{\delta\to 0}\limsup_{k\to\infty} \int_D v_{k,\delta}(u_k-u_{k,\delta})+v(u_k-u_{k,\delta})\geq 0.
\]
Thus,
\begin{equation}\label{eq:bregman}
0\geq \lim_{\delta\to 0}\limsup_{k\to\infty} \int_{D} f(u_{k,\delta})-f(u)-v(u_{k,\delta}-u).
\end{equation}

Given $\epsilon>0$, let $D_{k,\delta, \epsilon}=\{(t,x)\in D: |u_{k,\delta}-u|>\epsilon\}$. Equation (\ref{eq:bregman}) is a Bregman divergence of a strictly convex function, therefore,
\[
\lim_{\delta\to 0}\limsup_{k\to\infty} |D_{k,\delta,\epsilon}|=0.
\]
If we let $u'_k=\max(\min(a_1, u_k),a_0)$ and $D'_{k,\epsilon}=\{(t,x)\in D: |u_{k}'-u|>\epsilon\}$ then it is clear that $\limsup_{k\to\infty}|D_{k,\epsilon}|=0.$ Thus, $u_k'$ converges locally in measure to $u$.  From our work in the first paragraph we know that $u_k-u_k'$ converges locally in measure to zero, thus we are done.

\end{proof}

\bibliographystyle{alpha}
\bibliography{references}
\end{document}